\title{A framework for rigorous computational methods using Haar wavelets for differential equations}
\author{
    Guilherme Nakassima\thanks{Corresponding author} \textsuperscript{,}\hspace{1pt}\thanks{Instituto de Ciências Matemáticas e de Computação, Universidade de São Paulo. 400 Trabalhador São-Carlense Avenue, São Carlos, 13566-590, São Paulo, Brazil. E-mail: \texttt{gknakassima@gmail.com}} ,
    Marcio Gameiro\thanks{Department of Mathematics, Rutgers, The State University of New Jersey, 57 US Highway 1, Piscataway, 08854, NJ, USA. E-mail: \texttt{marciogameiro@gmail.com}}
}
\date{}
\DeclareMathOperator\supp{supp}
\newcommand{\R}{\mathbb{R}}
\theoremstyle{plain}
\newtheorem{theorem}{Theorem}[section]
\newtheorem{proposition}{Proposition}[section]
\newtheorem{lemma}{Lemma}[section]
\newtheorem{definition}{Definition}[section]
\theoremstyle{definition}
\newtheorem{remark}{Remark}[section]
\newtheorem*{notation}{Notation}
\begin{document}

\maketitle

\begin{abstract}
    This work presents a framework for a-posteriori error-estimating algorithms for differential equations which combines the radii polynomial approach with Haar wavelets. By using Haar wavelets, we obtain recursive structures for the matrix representations of the differential operators and quadratic nonlinearities, which can be exploited for the radii polynomial method in order to get error estimates in the $L^2$ sense. This allows the method to be applicable when the system or solution is not continuous, which is a limitation of other radii-polynomial-based methods. Numerical examples show how the method is implemented in practice. 

    \vspace{1em}
    
    \textbf{Keywords:} Rigorous computation, Computer-assisted proofs, Haar wavelets, Nonlinear dynamical systems.

    \vspace{1em}
    
    \textbf{2020 MSC:} 34A34, 34L30, 65G20, 65H10, 65T60.
\end{abstract}

\section{Introduction}
\label{sect:introduction}

Rigorous computation is an area under active development since the 1980s \cite{1988nakao}. With an steady increase of computing power, numerical methods became viable tools for analyzing differential equations and gaining insight on structures such as invariant objects. However, standard numerical methods provide only approximations; the results are non-rigorous and cannot be used in formal proofs. They can only be used to gain insights on the true structures of the system. Moreover, some structures, such as bifurcations, may still be hidden even when using very accurate numerical methods.

Rigorous computational methods try to fill these gaps, providing mathematically valid estimates and bounds for truncation and rounding errors, and rigorously proving the existence of such hidden structures. Over the years, a number of such methods were developed, such as rigorous integration \cite{2002zgliczynski, capd, 2016miyaji}, Conley index methods \cite{2005day, 2008maier-paape}, self-consistent bounds \cite{2001zgliczynski, 2002zgliczynski2} and discretization methods \cite{2013nakao, 2020hashimoto}. A more thorough review can be seen in \cite{nakao-plum} and references therein. 

Of particular interest to us are the \emph{radii polynomials} approach \cite{2007day, 2007gameiro, 2010vanderberg, 2014lessard, 2016hungria, 2017gameiro, 2019breden, 2019reinhardt, 2020sheombarsing}. These methods recast the problem of investigating the existence of structures as finding solutions to functional equations, usually expanding the solutions in terms of a basis. Then, usual numerical methods are employed to find an approximate solution to these equations. Finally, using fixed point theorems, we can guarantee the existence of a true, rigorous solution of the functional equations within certain bounds of the numerical solution. The hypotheses of the fixed point theorems are in turn proven to be satisfied with the aid of the so-called radii polynomials.

In this work we present a new radii polynomial method employing Haar wavelets. While many other bases were already employed by this approach, such as Taylor series \cite{2019reinhardt}, Fourier series \cite{2010vanderberg} and Chebyshev polynomials \cite{2014lessard}, to the best of our knowledge, no attempt has been made to combine wavelet methods and rigorous computations. We also believe that this method can be can be a framework to build other methods upon, such as rigorous continuation methods and methods for partial differential equations.

Wavelets are functions that form an orthonormal basis for the $L^2$ function space. While wavelet theory was only relatively recently formalized, their special properties --- such as time and frequency localization --- made them widely applicable in many fields, such as signal processing and compression algorithms. This poses an interesting case, because most of the aforementioned radii polynomial methods were proposed to work with smooth functions, while our wavelet-based radii polynomial method works in more general settings. 

The radii polynomial method presented in this paper is based on the ideas of a numerical method for solving differential equations, the \emph{Haar wavelet method}, which was first proposed in \cite{chen-hsiao}. It is assumed that the highest-order derivative is expressed in terms of wavelets, and the solutions are given by the integral of the series, essentially rewriting the differential equation in its integral form. This allows the method to work with only Haar wavelets, the simplest wavelet available, and leads to a matrix representation of the integral operator. The simplicity of the Haar wavelet allows this matrix to be easily and recursively calculated. The original Haar wavelet method was further analyzed, developed and applied in several publications \cite{2009lepik, 2019oruc, 2013aziz, 2015siraj, 2020mehandiratta, 2015majak2}.

In our method, we find the functional equations for the radii polynomial method using the same expansions of the Haar wavelet method. This essentially transforms the differential equation into an integral one, and allow us to use a radii polynomial theorem similar to \cite{ginzburglandau}. Also, by using Haar wavelets, the integral operator and nonlinearities can be represented using infinite but recursive matrices, allowing us to make the estimates needed in the radii polynomial approach. 

The work is organized in the following way. In Section~\ref{sect:haar_wavelet} we introduce the Haar wavelet and its integral, and review some of their properties. In Section~\ref{sect:radii_poly} we introduce the radii polynomial method and prove the theorems that guarantee the existence of a true solution, provided that certain estimates are satisfied. In Section~\ref{sect:nonlinear} we study quadratic nonlinearities in order to prove estimates needed in the method. In Section~\ref{sect:examples} we present some examples illustrating the applications of the proposed method.
\section{The Haar wavelets and their integral}
\label{sect:haar_wavelet}

Here we introduce the Haar wavelet and its integral, which are one of the pillars of the proposed methods. 
Consider the space $L^2([0,1])$ of the square-integrable functions $f:[0,1] \to \R$ with respect to the usual Lebesgue measure. The Haar wavelets are a family of functions $\{\phi,\psi_{j,k}\} \subset L^2([0,1])$ defined, for $j = 0, 1, \ldots$ and $k = 0, 1, \ldots, 2^j-1$, by
\begin{equation}
\label{eq:haar_wavelet_definition}
    \phi(t) := 
    \begin{cases}
        1 &,\quad 0 \leq t < 1 \\
        0 &,\quad \text{otherwise}
    \end{cases}  \qquad \text{,} \qquad
    \psi_{j,k}(t) := 
    \begin{cases}
        {2^{j/2}}  &,\quad \frac{k}{2^j} \leq t < \frac{k+0.5}{2^j}\\[3pt]
        -{2^{j/2}} &,\quad \frac{k+0.5}{2^j} \leq t < \frac{k+1}{2^j}\\[3pt]
        0  &,\quad \text{otherwise}
    \end{cases}
\end{equation}

The Haar wavelets form an orthonormal basis for $L^2([0,1])$; the proof is in many standard texts in wavelet theory, see e.g. \cite{hernandez}. Hence any function $y(t) \in L^2([0,1])$ can be expanded into a unique Haar wavelet series
\begin{equation*}
    y(t) = c_1\phi(t) + \sum_{j=0}^\infty \sum_{k=0}^{2^j-1}  c_{j,k} \psi_{j,k}(t),
\end{equation*}
where $c_1 = \int_0^1 y(t)\phi(t)\,dt$, $c_{j,k} = \int_0^1 y(t)\psi_{j,k}(t)\,dt$, and the sum converges in $L^2([0,1])$. If we make $i=2^j+k+1$, then the sequence $(c_i)_{i=1}^\infty \in \ell^2(\R)$.

% \begin{figure}[htb!]
%     \centering
%     \includegraphics[width=0.8\textwidth]{}
%     \includegraphics[width=0.8\textwidth]{}
%     \caption{Haar scaling and wavelet functions for $j=0$ and $j=1$.}
%     \label{fig:haar_wavelet_higher_j}
% \end{figure}

\begin{notation}
We can change between the ``one-index" and ``two-indices" notations, depending on which is more convenient in each case. One can be converted to the other by making, for all $i > 2$, 
\begin{equation*}
    i = 2^j + k + 1 \iff 
    \begin{aligned}
        j &= \lfloor \text{log}_2 i \rfloor \\
        k &= i - 2^j - 1
    \end{aligned}
\end{equation*}
where $\lfloor \cdot \rfloor$ is the floor function. The index $i=1$ is reserved for the scaling function $\phi$.
\end{notation}

Conversely, any $(c_i)_{i=1}^\infty \in \ell^2(\R)$ defines a unique $f(t) \in L^2([0,1])$ by making
\begin{equation*}
    f(t) = \sum_{i=1}^\infty c_i\psi_i(t) = c_1\phi(t) + \sum_{j=0}^\infty \sum_{k=0}^{2^j-1} c_{j,k}\psi_{j,k}(t).
\end{equation*}

This means that we can define an invertible operator $\mathcal{H}:L^2([0,1]) \to \ell^2(\R)$ defined element-wise as
\begin{equation*}
    (\mathcal{H}f)_i := \int_0^1 f(t)\psi_i(t)\,dt.
\end{equation*}
The inverse Haar transform $\mathcal{H}^{-1}$ is given by
\begin{equation*}
    (\mathcal{H}^{-1}\text{\textbf{c}})(t) := \text{\textbf{h}}^T(t)\text{\textbf{c}} = \sum_{i=1}^\infty c_i\psi_i(t) \quad, \quad \text{\textbf{h}}(t) := (\psi_1(t),\psi_2(t),...)
\end{equation*}
It is worth noting that the Haar transform is an isometry, due to the fact that the Haar wavelet system is an orthonormal basis of $L^2([0,1])$.

Another interesting and useful property of the Haar wavelets is what we call the ``nesting property'': 
\begin{proposition}[Nesting property]
    Let $\psi_{j,k}$ and $\psi_{m,n}$ be two Haar wavelets such that $j < m$. If $\text{supp }\psi_{j,k} \cap \text{supp }\psi_{m,n} \not= \emptyset$, then either $\text{supp }\psi_{m,n} \subseteq [\frac{k}{2^j}, \frac{k+0.5}{2^j})$ or $\text{supp }\psi_{m,n} \subseteq [\frac{k+0.5}{2^j}, \frac{k+1}{2^j})$.
\end{proposition}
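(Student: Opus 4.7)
My plan is to reduce the statement to a general nesting fact about dyadic intervals. First I would note that $\supp \psi_{j,k} = [k/2^j,(k+1)/2^j)$ is itself a dyadic interval at scale $j$, and it splits as the disjoint union
\[
\left[\tfrac{k}{2^j},\tfrac{k+0.5}{2^j}\right) \;\cup\; \left[\tfrac{k+0.5}{2^j},\tfrac{k+1}{2^j}\right) \;=\; \left[\tfrac{2k}{2^{j+1}},\tfrac{2k+1}{2^{j+1}}\right) \;\cup\; \left[\tfrac{2k+1}{2^{j+1}},\tfrac{2k+2}{2^{j+1}}\right),
\]
so the two candidate halves in the statement are themselves dyadic intervals at the finer scale $j+1$. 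Since $m \geq j+1$, the problem reduces to showing the following nesting lemma: if $A = [n/2^m,(n+1)/2^m)$ and $B = [a/2^\ell,(a+1)/2^\ell)$ are dyadic intervals with $m \geq \ell$, then $A \cap B \neq \emptyset$ implies $A \subseteq B$. Once this is proved, applying it with $\ell = j+1$ and $B$ being either of the two halves of $\supp \psi_{j,k}$ immediately yields the proposition, because a nonempty intersection of $\supp\psi_{m,n}$ with $\supp\psi_{j,k}$ must be a nonempty intersection with at least one of the two halves.

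To establish the dyadic nesting lemma, I would pick any $x \in A \cap B$ and extract the two indices as floors: $n = \lfloor 2^m x\rfloor$ and $a = \lfloor 2^\ell x\rfloor$. Writing the Euclidean division $n = a'\cdot 2^{m-\ell} + r$ with $0 \leq r < 2^{m-\ell}$, a short check using $2^\ell x = (n + f)/2^{m-\ell}$ for some $0 \leq f < 1$ shows that $a = a' = \lfloor n/2^{m-\ell}\rfloor$. Substituting $n = a\cdot 2^{m-\ell} + r$ into the endpoints of $A$ then gives
\[
\tfrac{n}{2^m} = \tfrac{a}{2^\ell} + \tfrac{r}{2^m} \;\geq\; \tfrac{a}{2^\ell}, \qquad \tfrac{n+1}{2^m} = \tfrac{a}{2^\ell} + \tfrac{r+1}{2^m} \;\leq\; \tfrac{a}{2^\ell} + \tfrac{1}{2^\ell} = \tfrac{a+1}{2^\ell},
\]
which is exactly $A \subseteq B$.

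Conceptually the result is entirely elementary; the only delicate point is keeping track of half-open endpoints so that the two halves really partition $\supp\psi_{j,k}$ and the nesting inclusion is strict/loose in the right direction. I expect that to be the main place where one can slip up, but it is manageable by always working with the canonical half-open dyadic intervals $[n/2^m,(n+1)/2^m)$ from the start.
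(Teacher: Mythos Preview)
Your proof is correct and follows essentially the same idea the paper sketches: the paper omits the argument entirely, noting only that it ``consists in comparing the supports of the wavelets\ldots which are dyadic intervals of length $2^{-j}$, for different $j$.'' Your dyadic nesting lemma is precisely this comparison written out in full, so there is nothing to add.
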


The proof is simple and will be omitted; it consists in comparing the supports of the wavelets $\phi_{j,k}$, which are dyadic intervals of length $2^{-j}$, for different $j$. 
% Figure \ref{fig:nesting _property} depicts this property graphically. 
Intuitively, it means that, if the supports of two wavelets at different resolutions overlap, then the support of the ``finer" wavelet (that is, the higher-resolution one) is entirely nested within either the positive or the negative part of the ``coarser" wavelet.

% \begin{figure}[htb!]
%     \centering
%     \includegraphics[width=0.75\linewidth]{}
%     \caption{The ``nesting property'' of the Haar wavelets}
%     \label{fig:nesting _property}
% \end{figure}

For this work, we are also interested in the integral of the Haar wavelets, and how the integral relates to the wavelet themselves. The integrals of the Haar wavelet family in the interval $[0,1]$ are the triangular functions given by
\begin{equation}\label{eq:haar_wavelet_integral}
    w_1(t) = t \qquad \text{,} \qquad w_{j,k}(t) = 
    \begin{cases}
        {2^{j/2}}\left(t - \frac{k}{2^j}\right) &, \quad \frac{k}{2^j} \leq t \leq \frac{k+0.5}{2^j}\\[3pt]
        {2^{j/2}}\left(\frac{k+1}{2^j}-t \right) &,\quad \frac{k+0.5}{2^j} \leq t\leq \frac{k+1}{2^j}\\[3pt]
        0  &,\quad \text{otherwise}.
    \end{cases}
\end{equation}
% whose graphs can be seen in Figure \ref{fig:haar_integral}. 
We also extend the one-index notation to the Haar wavelet integrals $w_i$.

% \begin{figure}[ht]
%     \centering
%     \includegraphics[width=0.8\textwidth]{}
%     \caption{Integrals of the Haar scaling and wavelet functions}
%     \label{fig:haar_integral}
% \end{figure}

The Haar wavelet integrals are continuous functions in $[0,1]$; thus they are square-integrable in that interval, and can be expanded in Haar wavelet series themselves:
\begin{equation}\label{eq:haar_integral_series}
    w_i(t) = \sum_{l=1}^\infty P_{i,l} \psi_l(t) = P\text{\textbf{h}}^T(t) \ ,\  P_{i,l} := \int_0^1 \psi_l(t)w_i(t)\,dt.
\end{equation}

While $P$ is expressed by an infinite matrix, there is a recursive formula to compute it. We must first define the Haar matrix of our wavelet system:
\begin{definition}
    For a given resolution $J$, the \textbf{Haar matrix} $H_M$ of order $M = 2^{J+1}$ is given element-wise by
    \begin{equation}\label{eq:haar_matrix_element_definition}
        (H_M)_{p,q} := \psi_p(t_q)
    \end{equation}
    where $\displaystyle t_q = \frac{q-0.5}{M}$, for $q = 1,\dots,M$.
    
    The \textbf{discrete Haar transform matrix} $HT_M$ of order $M = 2^{J+1}$ is defined by
    \begin{equation}\label{eq:haar_transform_matrix_definition}
        HT_M := \frac{1}{\sqrt{M}}H_M
    \end{equation}
\end{definition}

% Intuitively, each line of a Haar matrix represents a Haar wavelet, sampled at the midpoint of each dyadic interval $1/2^J$, where the Haar wavelet is continuous and assumes constant values. 
% For example, $H_4$ is given by
% \begin{equation*}
%     H_4 = 
%     \begin{bmatrix}
%         1 & 1 & 1 & 1 \\
%         1 & 1 & -1 & -1 \\
%         \sqrt{2} & -\sqrt{2} & 0 & 0 \\
%         0 & 0 & \sqrt{2} & -\sqrt{2}
%     \end{bmatrix}
% \end{equation*}
% The discrete-time Haar transform is a wavelet transform derived from the Haar matrix that takes $2^J$ samples of a function $y(t)$ taken at the times $t_q$ and outputs an approximation of the wavelet coefficients of $y(t)$. The $\frac{1}{\sqrt{M}}$ factor makes it a unitary transform (that is, it preserves the $\ell^2$ norm of the vectors), which is of importance to us. 
A particularly important fact is that $HT_M$ is unitary for all $J$, and hence $H_M$ is invertible and $(H_M)^{-1} = \frac{1}{M} H_M^T$.

\begin{theorem}\label{thm:P_recursive_formula}
The infinite matrix $P$ can be recursively calculated as
\begin{equation}\label{eq:PJ_rec_formula}
    P_1 = \frac{1}{2} \qquad,\qquad P_{2m} = 
    \begin{bmatrix}
        P_m & -\displaystyle\frac{1}{4\sqrt{m^3}}H_m \\[0.1in]
        \displaystyle\frac{1}{4\sqrt{m^3}}H_m^T & 0_{m \times m}
    \end{bmatrix}
\end{equation}
for $m = 2^j$ and $j = 0,1,2,\dots$.
\end{theorem}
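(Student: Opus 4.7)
The plan is induction on $j$ with $m=2^j$, interpreting $P_m$ throughout as the leading $m\times m$ principal submatrix of the infinite matrix $P$. The base case $P_1 = \int_0^1 \phi(t)\,w_1(t)\,dt = \int_0^1 t\,dt = 1/2$ is immediate. For the inductive step, I split $\{1,\ldots,2m\}$ into the ``coarse'' indices $\{1,\ldots,m\}$, which via the one-index/two-index conversion correspond to $\phi$ together with all wavelets of resolution strictly less than $j$, and the ``fine'' indices $\{m+1,\ldots,2m\}$, which via $l=m+k+1$ label the $m$ wavelets $\psi_{j,k}$ for $k=0,\ldots,m-1$. I then analyze each of the four resulting blocks of $P_{2m}$ separately.

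Two blocks are essentially free. The top-left $m\times m$ block coincides with $P_m$ because the entry $P_{i,l}$ depends only on $i,l$ and not on any truncation level. For the bottom-right block, wavelets at the same resolution $j$ have disjoint dyadic supports, so $P_{m+k'+1,\,m+k+1}=0$ whenever $k\neq k'$; when $k=k'$, the triangle $w_{j,k}$ is symmetric and $\psi_{j,k}$ is antisymmetric about the midpoint of their common support, so the integral vanishes.

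The off-diagonal blocks are where the nesting property earns its keep. For the top-right block, whenever $\supp\psi_{j,k}$ meets $\supp w_i$ with $i\le m$, nesting forces $\supp\psi_{j,k}$ to lie in a single linear piece of $w_i$; since $w_i' = \psi_i$ on that piece and $\psi_i$ is constant there, $w_i$ is affine on $\supp\psi_{j,k}$ with slope $\psi_i(t_{k+1})$, where $t_{k+1} = (k+0.5)/2^j$ is both the midpoint of $\supp\psi_{j,k}$ and the collocation point indexing column $k+1$ of $H_m$ (the case $i=1$ degenerates to $w_1(t)=t$ with slope $1 = \phi(t_{k+1})$). Combining $\int \psi_{j,k}\, dt = 0$ with the elementary computation $\int_0^1 t\,\psi_{j,k}(t)\, dt = -1/(4\cdot 2^{3j/2})$ then yields $P_{i,\,m+k+1} = -\psi_i(t_{k+1})/(4\sqrt{m^3}) = -(H_m)_{i,k+1}/(4\sqrt{m^3})$. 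The bottom-left block runs the dual argument: $\supp w_{j,k'}$ is itself nested in a single linear piece of each coarser $\psi_l$ (or $\psi_l = \phi \equiv 1$), so $\psi_l$ is constant on $\supp w_{j,k'}$ and equals $\psi_l(t_{k'+1}) = (H_m^T)_{k'+1,\, l}$. Pulling this constant out of the integral and using the triangle-area computation $\int_0^1 w_{j,k'}(t)\, dt = 1/(4\cdot 2^{3j/2}) = 1/(4\sqrt{m^3})$ yields $P_{m+k'+1,\, l} = (H_m^T)_{k'+1,\, l}/(4\sqrt{m^3})$.

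The main obstacle I anticipate is bookkeeping rather than substance: lining up the row and column labels of the $H_m$ block under the one-index/two-index conversion, checking that the collocation midpoints $t_q$ of $H_m$ genuinely coincide with the midpoints of the dyadic intervals produced by the nesting argument, and verifying that the slope of $w_i$ really is $+\psi_i(t_{k+1})$ on the relevant linear piece without a sign error (including the degenerate case $i=1$). Once those identifications are made precise, the four blocks assemble directly into the claimed formula for $P_{2m}$.
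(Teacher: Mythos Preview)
Your proof is correct. The paper does not actually prove this theorem; it simply defers to the reference \cite{chen-hsiao} with the remark that minor modifications are needed for normalized wavelets. Your direct block-by-block computation, driven by the nesting property and the two elementary integrals $\int_0^1 t\,\psi_{j,k}(t)\,dt = -1/(4\,2^{3j/2})$ and $\int_0^1 w_{j,k}(t)\,dt = 1/(4\,2^{3j/2})$, is self-contained and gives exactly what the cited reference would supply (adjusted for normalization). One stylistic remark: you frame it as induction on $j$, but the argument is really a direct computation at each level --- the only place the ``inductive'' structure enters is the tautological identification of the top-left block with $P_m$. The index bookkeeping you flagged as the main obstacle (matching the collocation points $t_q=(q-0.5)/m$ of $H_m$ with the midpoints $(k+0.5)/2^j$ of the dyadic intervals, and the sign of the slope $w_i'=\psi_i$) is handled correctly.
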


The proof for this formula is in \cite{chen-hsiao}, with some modifications to account for the fact that we are using normalized wavelets.

Let us prove that $P^T\text{\textbf{c}} \in \ell^2(\R)$ for all $\text{\textbf{c}} \in \ell^2(\R)$. We first need to define some projections. Given a resolution level $J$, let $M = 2^{J+1}$ as before, and define the projection $\Pi_M$ as
\begin{equation}
    \begin{aligned}
        \Pi_M: \ell^2(\R) &\to \R^M \\
        \text{\textbf{c}} &\mapsto (c_1,...,c_M).
    \end{aligned}
\end{equation}
We identify the vector $(c_1,...,c_M) \in \R^M$ with its infinite-dimensional counterpart $(c_1,...,c_M,0,0,...) \in \ell^2(\R)$. We also define the projection $\Pi_\infty \in B(\ell^2(\R))$ as 
\begin{equation*}
    \Pi_\infty\text{\textbf{c}} := (I - \Pi_M)\text{\textbf{c}}.
\end{equation*}

\begin{notation}
    Given $\text{\textbf{c}} \in \ell^2(\R)$, we sometimes divide it into blocks of length $2^n$, $n = 0,1,2,...$ as
    \begin{equation}\label{eq:vec_block_division}
        \text{\textbf{c}} = \left( \text{\textbf{c}}^*_0, \text{\textbf{c}}^*_1, \text{\textbf{c}}^*_2, \text{\textbf{c}}^*_4, ... \right)^T \ , \text{ where } \text{\textbf{c}}^*_{0} = c_1 \text{ , } \text{\textbf{c}}^*_{2^n} = \left(c_{2^{n-1}+1}, c_{2^{n-1}+2}, ..., c_{2^n} \right)^T.
    \end{equation}
    Also, given a matrix $A$, we denote $A_{m_1:m_2,n_1:n_2}$ the submatrix of $A$ given by
    \begin{equation*}
        A_{m_1:m_2,n_1:n_2} = 
        \begin{bmatrix}
            A_{m_1,n_1} & \cdots & A_{m_1,n_2} \\ 
            \vdots &  & \vdots \\
            A_{m_2,n_1} & \cdots & A_{m_2,n_2}
        \end{bmatrix}
    \end{equation*}
    If we wish to take all rows or all columns of $A$, we denote $A_{*,n_1:n_2}$ and $A_{m_1:m_2,*}$, respectively. Lastly, to reduce notation clutter, we denote $\text{\textbf{c}}_M = \Pi_M\text{\textbf{c}}$ and $\text{\textbf{c}}_\infty = \Pi_\infty\text{\textbf{c}}$.
\end{notation}

\begin{proposition}\label{prop:PTc_etimates}
    For $\text{\normalfont{\textbf{c}}} \in \ell^2(\R)$, $M = 2^{J+1}$, $J = 0,1,2,...$ and the projections as defined before,
    \begin{align*}
        \left\|\Pi_M P^T \text{\normalfont{\textbf{c}}}\right\|_{\ell^2} \leq \frac{1}{\sqrt{3}} \left(4 - \frac{1}{2^{2J+2}}\right)^\frac{1}{2} \|\text{\normalfont{\textbf{c}}}\|_{\ell^2} \quad\quad,\quad\quad
        \left\|\Pi_\infty P^T \text{\normalfont{\textbf{c}}}\right\|_{\ell^2} \leq \frac{1}{\sqrt{3}} \frac{\|\text{\normalfont{\textbf{c}}}\|_{\ell^2}^2}{2^{J+1}} 
    \end{align*}
    In particular, letting $J \to \infty$, $P^T \text{\normalfont{\textbf{c}}} \in \ell^2(\R)$ and $P^T: \ell^2(\R) \to \ell^2(\R)$ is a bounded linear operator. 
\end{proposition}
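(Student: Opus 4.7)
The plan is to pass to the $L^2$ picture via the Haar isometry. Since $w_i(t) = \int_0^t \psi_i(s)\,ds$ for every $i$ and the integration operator $\mathcal{I}: L^2([0,1]) \to L^2([0,1])$, $(\mathcal{I}f)(t) := \int_0^t f(s)\,ds$, is bounded, applying $\mathcal{I}$ termwise to the Haar expansion of $f := \mathcal{H}^{-1}\text{\textbf{c}}$ shows $P^T\text{\textbf{c}} = \mathcal{H}(\mathcal{I}f)$ in $\ell^2(\R)$. Since $\mathcal{H}$ is an isometry and $\psi_1,\dots,\psi_M$ span the scaling space $V_{J+1}$ of step functions on the dyadic partition $\{I_k = [k/M,(k+1)/M)\}_{k=0}^{M-1}$,
\[
\|\Pi_M P^T\text{\textbf{c}}\|_{\ell^2} = \|Q_M \mathcal{I}f\|_{L^2}, \qquad \|\Pi_\infty P^T\text{\textbf{c}}\|_{\ell^2} = \|(I - Q_M)\mathcal{I}f\|_{L^2},
\]
where $Q_M$ is the orthogonal projection onto $V_{J+1}$. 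Both claimed bounds become $L^2$ statements about $\mathcal{I}f$.

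For the tail estimate, on each cell $I_k$ the function $(I - Q_M)\mathcal{I}f$ equals $g - \bar g_k$ with $g = \mathcal{I}f$ and $\bar g_k = M\int_{I_k} g$. Writing $g(t) - \bar g_k = \frac{1}{|I_k|}\int_{I_k}(g(t) - g(s))\,ds$ and applying Cauchy--Schwarz gives $|g(t) - \bar g_k|^2 \leq M\int_{I_k}(g(t) - g(s))^2\,ds$; a second Cauchy--Schwarz yields $(g(t) - g(s))^2 = (\int_s^t f)^2 \leq |t-s|\int_{I_k} f^2$. Integrating in $t$ and using $\iint_{I_k \times I_k}|t-s|\,ds\,dt = |I_k|^3/3$ produces $\int_{I_k}(g - \bar g_k)^2\,dt \leq \tfrac{1}{3M^2}\int_{I_k} f^2\,dt$. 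Summing over $k$ and invoking $\|f\|_{L^2} = \|\text{\textbf{c}}\|_{\ell^2}$ gives $\|\Pi_\infty P^T\text{\textbf{c}}\|_{\ell^2} \leq \|\text{\textbf{c}}\|_{\ell^2}/(\sqrt 3 \cdot 2^{J+1})$, matching the second bound (the exponent $2$ on $\|\text{\textbf{c}}\|$ in the statement appears to be a typo).

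The first bound follows immediately from standard $L^2$ estimates: $Q_M$ is an orthogonal projection so $\|Q_M \mathcal{I}f\|^2 \leq \|\mathcal{I}f\|^2$, and Cauchy--Schwarz gives $|\mathcal{I}f(t)|^2 \leq t\int_0^t f^2$, whence $\|\mathcal{I}f\|^2 \leq \tfrac{1}{2}\|f\|^2$. Since $\tfrac{1}{2} \leq \tfrac{1}{3}(4 - 1/M^2)$ for every $M \geq 1$, the stated inequality follows (rather loosely — a matrix-based alternative, splitting $\text{\textbf{c}} = \text{\textbf{c}}_M + \text{\textbf{c}}_\infty$, using the recursion of Theorem~\ref{thm:P_recursive_formula} with the block estimate $\|[A,B]\|_{\mathrm{op}}^2 \leq \|A\|_{\mathrm{op}}^2 + \|B\|_{\mathrm{op}}^2$ and $H_m H_m^T = mI$ to prove inductively $\|P_M^T\|_{\mathrm{op}}^2 \leq \tfrac{2}{3}(1 - 1/(4M^2))$ and $\|A_\infty\|_{\mathrm{op}}^2 \leq 1/(12M^2)$, gives exactly the stated constant via $(a+b)^2 \leq 2(a^2+b^2)$). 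Finally, letting $J \to \infty$ in the tail bound forces $\Pi_\infty P^T\text{\textbf{c}} \to 0$, so $P^T\text{\textbf{c}} \in \ell^2(\R)$, and combining both estimates establishes $\|P^T\|_{\mathrm{op}} \leq 2/\sqrt 3$. The subtle point is the constant $\tfrac{1}{3}$ in the tail bound: a one-shot Cauchy--Schwarz bounding $|g(t) - \bar g_k|$ directly via $g' = f$ gives only the weaker $\tfrac{1}{M^2}$, and one must detour through pairwise differences $g(t) - g(s)$ to recover the correct constant.
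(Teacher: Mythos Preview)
Your proof is correct and takes a genuinely different route from the paper's. The paper works purely in $\ell^2$: it uses the recursive block structure of $P$ (Theorem~\ref{thm:P_recursive_formula}) to write each block $(P^T\text{\textbf{c}})^*_{2^j}$ explicitly, bounds it by $\|\text{\textbf{c}}\|_{\ell^2}/2^{j+1}$ using that $HT_m$ is unitary, and then sums the squared block norms as geometric series. Your approach instead pulls everything back to $L^2$ via the isometry $\mathcal{H}$, identifies $P^T\text{\textbf{c}}$ with the Haar coefficients of $\mathcal{I}f$, and recognises $\Pi_\infty$ as removal of the piecewise-constant average on the dyadic cells. The tail estimate then becomes a clean local Poincar\'e inequality with the exact constant $1/3$, and the finite-part bound follows from the elementary $\|\mathcal{I}\|_{L^2\to L^2}\le 1/\sqrt{2}$. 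Your $L^2$ argument is more conceptual and in fact yields a \emph{sharper} constant for the $\Pi_M$ part ($1/\sqrt 2$ versus the paper's $\approx 2/\sqrt 3$); the paper's block method, on the other hand, produces exactly the intermediate block bound $\|(P^T\text{\textbf{c}})^*_{2^j}\|\le\|\text{\textbf{c}}\|/2^{j+1}$, which is reused throughout the later quadratic estimates in the appendix. Your parenthetical matrix sketch is unnecessary given that $1/\sqrt 2$ already dominates the stated bound, and as written it is a bit loose (the inequality $\|[A,B]\|^2\le\|A\|^2+\|B\|^2$ handles block \emph{rows}, not full $2\times 2$ block matrices, so the inductive step for $\|P_M^T\|$ needs more care). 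You are also right that the exponent $2$ on $\|\text{\textbf{c}}\|$ in the $\Pi_\infty$ bound is a typo; the paper's own computation ends with $\|\Pi_\infty P^T\text{\textbf{c}}\|^2\le\|\text{\textbf{c}}\|^2/(3\cdot 2^{2J+2})$.
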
 

\begin{proof}
From the structure of $P$ in Theorem~\ref{thm:P_recursive_formula} and the block representation of $\text{\textbf{c}}$, we have
\begin{equation}\label{eq:PTc_expression}
    \left(P^T\text{\textbf{c}}\right)_{2^j}^* = -\frac{1}{2^{\frac{3j}{2} + 2}} H_{2^j}^T (\Pi_{2^j}\text{\textbf{c}}) + \sum_{q=j+1}^\infty \frac{1}{2^{\frac{3q}{2}+2}} H_{2^q} \text{\textbf{c}}^*_{2^q}
\end{equation}
Recalling that $H_M = \sqrt{M} HT_M$ and $HT_M$ is a unitary matrix, we can bound the above term by
\begin{align}\label{eq:PTc_block_estimate_infinite_vector}
    \left\|\left(P^T\text{\textbf{c}}\right)_{2^j}^*\right\|_{\ell^2} &\leq \frac{1}{2^{j+2}} \left\|HT_{2^j}^T (\Pi_{2^j}\text{\textbf{c}})\right\|_{\ell^2} + \sum_{q=j+1}^\infty \frac{1}{2^{q+2}} \left\|HT_{2^q} \text{\textbf{c}}^*_{2^q}\right\|_{\ell^2} \notag \\
    &\leq \frac{\|\text{\textbf{c}}\|_{\ell^2}}{2^{j+2}} + \sum_{q=j+1}^\infty \frac{1}{2^{q+2}} \left\|\text{\textbf{c}}^*_{2^q}\right\|_{\ell^2} \leq \frac{\|\text{\textbf{c}}\|_{\ell^2}}{2^{j+1}}
\end{align}
The only term left is $\left(P^T\Bar{\text{\textbf{c}}}\right)_{1}$. We can bound it with
\begin{align}\label{eq:PTc_1st_element_estimate}
    \left| \left(P^T\Bar{\text{\textbf{c}}}\right)_1 \right| &= \left| \frac{1}{2}c_1 - \sum_{q=0}^\infty \frac{1}{2^{\frac{3q}{2}+2}} H_{2^q} \text{\textbf{c}}^*_{2^q} \right| \leq \frac{1}{2}\left|c_1\right| + \sum_{q=0}^\infty \frac{1}{2^{q+2}} \left\|HT_{2^q} \text{\textbf{c}}^*_{2^q}\right\|_{\ell^2} \leq \left\|\text{\textbf{c}}\right\|_{\ell^2}
\end{align}
Thus, the norm of $\Pi_M P^T \text{\normalfont{\textbf{c}}}$ can be estimated by
\begin{align*}
    \left\|\Pi_M P^T \text{\normalfont{\textbf{c}}}\right\|_{\ell^2}^2 &\leq \left|\left(P^T\text{\textbf{c}}\right)_1\right|^2 + \sum_{j=0}^J \left\|\left(P^T\text{\textbf{c}}\right)_{2^j}^*\right\|_{\ell^2}^2 = \left(4 - \frac{1}{2^{2J+2}}\right) \frac{\|\text{\textbf{c}}\|_{\ell^2}^2}{3}
\end{align*}
Analogously, the norm of $\Pi_\infty P^T \text{\normalfont{\textbf{c}}}$ is bounded by
\begin{align*}
    \left\|\Pi_\infty P^T \text{\normalfont{\textbf{c}}}\right\|_{\ell^2}^2 &\leq \sum_{j=J+1}^{\infty} \left\| \left(P^T\Bar{\text{\textbf{c}}}\right)_{2^j}^* \right\|_{\ell^2}^2 \leq \sum_{j=J+1}^\infty \frac{\|\text{\textbf{c}}\|_{\ell^2}^2}{2^{2j+2}} = \frac{1}{3} \frac{\|\text{\textbf{c}}\|_{\ell^2}^2}{2^{2J+2}}
\end{align*}
\end{proof}
\section{The radii polynomial approach}
\label{sect:radii_poly}

In this section, we introduce the radii polynomial \cite{2007day, 2007gameiro, 2010vanderberg, 2014lessard, 2016hungria, 2017gameiro} approach for rigorous computation. Consider an initial value problem 
\begin{equation}\label{eq:autonomous_differential_equation}
    \begin{cases}
        \Dot{u} = f(u,t) \\
        u(0) = u_0
    \end{cases}
\end{equation}
and suppose we find a numerical, approximate solution $\Bar{u}(t)$. Our aim is to prove the existence of a true solution $\Tilde{u}(t)$ in some neighborhood of $\Bar{u}(t)$. This is done using the radii polynomial method. 

For our work, suppose that $\dot{u}(t) \in L^2([0,1])$. Then, we can write $\dot{u}$ and $u$ using the Haar wavelet and its integral as
\begin{equation}\label{eq:solution_expansion}
    \dot{u}(t) = \sum_{i=1}^\infty c_i \psi_i(t) \quad,\quad u(t) = \sum_{i=1}^\infty c_i w_i(t) + u_0
\end{equation}
Substituting back into the differential equation \eqref{eq:autonomous_differential_equation} and taking the Haar transform,
\begin{equation}\label{eq:def_F}
    F(\text{\textbf{c}}) := \text{\textbf{c}} - \mathcal{H}\left(f\left( t,\sum_{i=1}^\infty c_i w_i(t) + u_0 \right)\right) = 0
\end{equation}
Thus, we have a map $F:\ell^2(\R) \to \ell^2(\R)$ such that finding a solution of the differential equation implies finding a zero of $F$. Conversely, due to the uniqueness of the wavelet series that represents $\dot{u}$ --- and consequently of the series that represents $u(t)$ --- finding a zero of $F$ is equivalent to finding a solution to \eqref{eq:autonomous_differential_equation}.

Now we recast the problem of finding the zeros of $F$ to finding a fixed point of a map $T: \ell^2(\R) \to \ell^2(\R)$ near the numerical solution $\Bar{u}$. This is done by showing that $T$ is a contraction near $\Bar{u}$. First, define the operator $A: \ell^2(\R) \to \ell^2(\R)$ as 
\begin{equation*}
    A\text{\textbf{x}} = A_M\Pi_M\text{\textbf{x}} + \Pi_\infty\text{\textbf{x}},
\end{equation*}
where $A_M: \R^M \to \R^M$ is a finite-dimensional, computational approximation for the inverse $D(\Pi_M F(\Bar{\text{\normalfont{\textbf{x}}}}))^{-1}$, with $\Bar{\text{\normalfont{\textbf{x}}}}$ the solution numerically obtained. Then we can define the fixed-point map $T: \ell^2(\R) \to \ell^2(\R)$ by
\begin{equation}\label{eq:T_fixed_point}
    T(\text{\textbf{x}}) := \text{\textbf{x}} - AF(\text{\textbf{x}}) = (\Pi_M - A_M \Pi_M F)(\text{\textbf{x}}) + \Pi_\infty(\text{\textbf{x}}-F(\text{\textbf{x}}))
\end{equation}
Its derivative, which is used for the radii polynomial method, is given by
\begin{equation}\label{eq:DT_fixed_point}
    DT(\text{\textbf{x}}) = \Pi_M - A_M \Pi_MDF(\text{\textbf{x}}) + \Pi_\infty(I-DF(\text{\textbf{x}}))
\end{equation}

\begin{remark}
    Since many of the matrices in this work are block matrices, one can compute their inverse as
    \begin{equation*}
        \begin{bmatrix}
            A & B \\
            C & D
        \end{bmatrix}^{-1}
        =
        \begin{bmatrix}
            A^{-1} + A^{-1}B(D-CA^{-1}B)^{-1}CA^{-1} & -A^{-1}B(D-CA^{-1}B)^{-1} \\
            -(D-CA^{-1}B)^{-1}CA^{-1} & (D-CA^{-1}B)^{-1}
        \end{bmatrix}
    \end{equation*}
    provided that the matrices $A$ and $D-CA^{-1}B$ are invertible. Calculating the inverse with this formula can be faster than directly inverting the full matrix $D(\Pi_M F(\Bar{\text{\normalfont{\textbf{x}}}}))$.
\end{remark}

\begin{notation}
    In order to help visualize the operators and reduce clutter in notation, we employ the following ``block matrix" notation for an operator $C:\ell^2(\R) \to \ell^2(\R)$ whenever it is convenient:
    \begin{equation}\label{eq:finite_infinite_decomposition}
        C\text{\textbf{x}} = 
        \begin{bmatrix} 
            C_M & C_{M,\infty} \\
            C_{\infty,M} & C_\infty
        \end{bmatrix}
        \begin{pmatrix}
            \Pi_M \text{\textbf{x}} \\
            \Pi_\infty \text{\textbf{x}}
        \end{pmatrix}.
    \end{equation}
    We refer to this as the \textbf{finite-infinite decomposition}.
\end{notation}

Intuitively, the first term of \eqref{eq:T_fixed_point} is a Newton-like map for the finite terms we computed numerically. In the second term, we hope the ``tail" of $F$ will contract to zero by itself --- which is what happens with quadratic nonlinearities. This is all motivated by the fact that, as $M$ increases, the new elements of the matrix $P_M$ become smaller. 

We now formally prove that a fixed point of $T$ corresponds to a zero of $F$:

\begin{proposition}\label{prop:fixed_point_zero_lin_eq}
    Suppose the map $T$ as defined above in \eqref{eq:T_fixed_point} is a contraction in some closed neighborhood of $\ell^2(\R)$.  Then $T$ has a unique fixed point $\Tilde{\text{\normalfont{\textbf{c}}}}$ in that neighborhood. Moreover, $\Tilde{\text{\normalfont{\textbf{c}}}}$ is a fixed point of $T$ if and only if it is a zero of $F$ as defined earlier in \eqref{eq:def_F}.
\end{proposition}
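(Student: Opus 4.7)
The plan is to split the claim into two pieces and handle them in order: first, existence and uniqueness of the fixed point from a direct application of Banach's contraction mapping theorem, and second, the equivalence between fixed points of $T$ and zeros of $F$ via an injectivity argument on $A$.

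For the first part, since $\ell^2(\R)$ is a Banach space, every closed subset is a complete metric space. By hypothesis $T$ is a contraction on such a closed neighborhood, and $T$ clearly maps $\ell^2(\R)$ into itself (both summands in \eqref{eq:T_fixed_point} lie in $\ell^2(\R)$). The Banach fixed-point theorem then supplies a unique fixed point $\tilde{\text{\textbf{c}}}$ in that neighborhood.

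For the equivalence, the definition $T(\text{\textbf{c}}) = \text{\textbf{c}} - AF(\text{\textbf{c}})$ gives directly $T(\text{\textbf{c}}) = \text{\textbf{c}} \iff AF(\text{\textbf{c}}) = 0$. One direction of the chain $AF(\text{\textbf{c}}) = 0 \iff F(\text{\textbf{c}}) = 0$ is trivial, so the remaining step is injectivity of $A$. Using the decomposition $A\text{\textbf{y}} = A_M \Pi_M \text{\textbf{y}} + \Pi_\infty \text{\textbf{y}}$ and the fact that $A_M \Pi_M \text{\textbf{y}} \in \R^M$ while $\Pi_\infty \text{\textbf{y}}$ lies in the orthogonal complement, we may apply $\Pi_M$ and $\Pi_\infty$ separately to $A\text{\textbf{y}} = 0$ to obtain $A_M \Pi_M \text{\textbf{y}} = 0$ and $\Pi_\infty \text{\textbf{y}} = 0$. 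Since $A_M$ is built as a numerical approximation of the (invertible) matrix $D(\Pi_M F(\bar{\text{\textbf{x}}}))^{-1}$ and is therefore itself invertible, we conclude $\Pi_M \text{\textbf{y}} = 0$ as well, so $\text{\textbf{y}} = 0$.

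The only non-automatic step is the injectivity of $A$, which reduces to the invertibility of the finite block $A_M$. This is really part of the standing setup of the method (one cannot plausibly use $A_M$ as a numerical inverse of a derivative without it being invertible), and will in concrete computations be checked as a byproduct of constructing $A_M$; I would state this as a standing assumption in the proof and move on. No further estimates are needed beyond what is already established for $T$ in the hypothesis.
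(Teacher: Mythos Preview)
Your proof is correct and follows essentially the same approach as the paper: both invoke the Banach fixed-point theorem for existence and uniqueness, and both establish the equivalence by splitting $AF(\tilde{\text{\textbf{c}}})=0$ into its $\Pi_M$ and $\Pi_\infty$ components using orthogonality, then appealing to the invertibility of $A_M$. Your framing of the second part as ``injectivity of $A$'' is a mild abstraction of the paper's direct computation with $F(\tilde{\text{\textbf{c}}})$, but the underlying argument is identical.
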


\begin{proof}
Since $T$ is a contraction in a closed neighborhood of $\ell^2(\R)$, the Banach Fixed Point Theorem guarantees that it has a fixed point $\Tilde{\text{\textbf{c}}}$ in the same neighborhood. Also, if $\Tilde{\text{\textbf{c}}}$ is a zero of $F$, then a straightforward calculation shows that it is a fixed point of $T$. 

It remains to prove that the fixed point $\Tilde{\text{\textbf{c}}}$ is a zero of $F$. By \eqref{eq:T_fixed_point}
\begin{equation*}
    T(\Tilde{\text{\textbf{c}}})-\Tilde{\text{\textbf{c}}} = 0 = A_M \Pi_M F(\Tilde{\text{\textbf{c}}}) + \Pi_\infty F(\Tilde{\text{\textbf{c}}}).
\end{equation*}
Since $A_M \Pi_M F(\Tilde{\text{\textbf{c}}}) \in \Pi_M(\ell^2(\R))$, we have
\begin{align*}
    \Pi_\infty F(\Tilde{\text{\textbf{c}}}) = 0 \quad,\quad A_M \Pi_M F(\Tilde{\text{\textbf{c}}}) = 0
\end{align*}
and since $A_M$ is invertible, then $\Pi_M F(\Tilde{\text{\textbf{c}}}) = 0$ as well. Thus $F(\Tilde{\text{\textbf{c}}}) = 0$.
\end{proof}

To prove that $T$ is actually a contraction near of our numerical solution $\Bar{\text{\textbf{c}}}$, we use the radii polynomials. First, we define the closed neighborhood  
\begin{equation}\label{eq:closed_balls}
    \overline{B_\omega(\Bar{\text{\textbf{c}}},r)} = \left\{ \text{\textbf{y}} \in \ell^2(\R) : \|\Pi_M(\text{\textbf{y}}-\Bar{\text{\textbf{c}}})\| \leq \omega r \text{ and } \|\Pi_\infty(\text{\textbf{y}}-\Bar{\text{\textbf{c}}})\| \leq (1-\omega) r \right\}
\end{equation}
in which $T$ will be a contraction. $\omega \in (0,1)$ is a ``trade-off parameter'': we can loosen the radius in the infinite part, at the cost of tightening the radius in the finite part, and vice-versa. For the next calculations, we assume $\omega$ is fixed, though in practice it is chosen later.

Next, we need bounds $Y_M$ and $Y_\infty$, and polynomials $Z_M(r)$ and $Z_\infty(r)$ such that
\begin{gather}
    \|\Pi_M(T(\Bar{\text{\textbf{c}}})-\Bar{\text{\textbf{c}}})\|_{\ell^2} \leq Y_M \label{eq:YM_bound_def} \\
    \|\Pi_\infty(T(\Bar{\text{\textbf{c}}})-\Bar{\text{\textbf{c}}})\|_{\ell^2} \leq Y_\infty \label{eq:Yinfty_bound_def} \\
    \sup_{x_1,x_2 \in \overline{B(r)}} \|\Pi_M(DT(\Bar{\text{\textbf{c}}} + x_1)x_2)\| \leq Z_M(r)r \label{eq:ZM_bound_def} \\
    \sup_{x_1,x_2 \in \overline{B(r)}} \|\Pi_\infty(DT(\Bar{\text{\textbf{c}}} + x_1)x_2)\| \leq Z_\infty(r)r \label{eq:Zinfty_bound_def}
\end{gather}
Then, we can define the radii polynomials as
\begin{equation}\label{eq:radii_poly_def}
    \begin{aligned}
        p_M(r) &:= Z_M(r)r - \omega r + Y_M  \quad,\quad
        p_\infty(r) &:= Z_\infty(r)r - (1-\omega)r + Y_\infty
    \end{aligned}
\end{equation}

\begin{theorem}\label{thm:radii_poly_fixed_point}
    Consider the radii polynomials as defined in \eqref{eq:radii_poly_def}. If there exists an $r_0 > 0$ such that $p_M(r_0) < 0$ and $p_\infty(r_0) < 0$, then there exists a unique $\Tilde{\text{\normalfont{\textbf{c}}}} \in B_\omega(\Bar{\text{\normalfont{\textbf{c}}}},r_0)$ such that $T(\Tilde{\text{\normalfont{\textbf{c}}}}) = \Tilde{\text{\normalfont{\textbf{c}}}}$.
\end{theorem}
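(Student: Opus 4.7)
The plan is to recognize $\overline{B_\omega(\bar{\text{\textbf{c}}},r_0)}$ as a closed ball in a suitable equivalent norm on $\ell^2(\R)$ and then apply the Banach fixed point theorem. Specifically, I would introduce the norm $\|\text{\textbf{x}}\|_\omega := \max\{\|\Pi_M\text{\textbf{x}}\|_{\ell^2}/\omega,\ \|\Pi_\infty\text{\textbf{x}}\|_{\ell^2}/(1-\omega)\}$. Because $\omega\in(0,1)$, this norm is equivalent to the standard $\ell^2$ norm, so $(\ell^2(\R),\|\cdot\|_\omega)$ is a Banach space and the set defined in \eqref{eq:closed_balls} is precisely the closed ball of radius $r_0$ around $\bar{\text{\textbf{c}}}$ in this norm. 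In particular it is nonempty, closed, convex, and complete, which is exactly what the fixed point theorem requires.

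Next I would verify the two standard hypotheses. For the self-mapping property, I would take any $\text{\textbf{y}}\in\overline{B_\omega(\bar{\text{\textbf{c}}},r_0)}$, write $T(\text{\textbf{y}})-\bar{\text{\textbf{c}}}=(T(\text{\textbf{y}})-T(\bar{\text{\textbf{c}}}))+(T(\bar{\text{\textbf{c}}})-\bar{\text{\textbf{c}}})$, and treat the $\Pi_M$ and $\Pi_\infty$ components separately. The second summand is bounded by $Y_M$ and $Y_\infty$ via \eqref{eq:YM_bound_def}--\eqref{eq:Yinfty_bound_def}. For the first summand I would use the mean value representation $T(\text{\textbf{y}})-T(\bar{\text{\textbf{c}}})=\int_0^1 DT(\bar{\text{\textbf{c}}}+s(\text{\textbf{y}}-\bar{\text{\textbf{c}}}))(\text{\textbf{y}}-\bar{\text{\textbf{c}}})\,ds$; convexity of the ball guarantees that the evaluation point stays in $\overline{B_\omega(\bar{\text{\textbf{c}}},r_0)}$ for every $s\in[0,1]$, so \eqref{eq:ZM_bound_def}--\eqref{eq:Zinfty_bound_def} apply directly and give bounds of $Z_M(r_0)r_0$ and $Z_\infty(r_0)r_0$. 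Summing, the $\Pi_M$-component satisfies $\|\Pi_M(T(\text{\textbf{y}})-\bar{\text{\textbf{c}}})\|_{\ell^2}\leq Z_M(r_0)r_0+Y_M$, which is strictly less than $\omega r_0$ precisely because $p_M(r_0)<0$; the $\Pi_\infty$-component is handled identically using $p_\infty(r_0)<0$. Hence $T(\text{\textbf{y}})\in\overline{B_\omega(\bar{\text{\textbf{c}}},r_0)}$.

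For contraction, the small technical step I expect to require care is that the bounds \eqref{eq:ZM_bound_def}--\eqref{eq:Zinfty_bound_def} are phrased as suprema over a box, not as operator norms. Because $DT(\bar{\text{\textbf{c}}}+\text{\textbf{x}}_1)$ is linear in its argument, I would rescale to obtain $\|\Pi_M DT(\bar{\text{\textbf{c}}}+\text{\textbf{x}}_1)\text{\textbf{x}}_2\|_{\ell^2}\leq Z_M(r_0)\|\text{\textbf{x}}_2\|_\omega$ and analogously for $\Pi_\infty$, valid for every $\text{\textbf{x}}_1\in\overline{B_\omega(0,r_0)}$ and every $\text{\textbf{x}}_2\in\ell^2(\R)$. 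Applying this inside the mean value integral for two arbitrary points $\text{\textbf{y}}_1,\text{\textbf{y}}_2\in\overline{B_\omega(\bar{\text{\textbf{c}}},r_0)}$ yields
\begin{equation*}
    \|T(\text{\textbf{y}}_1)-T(\text{\textbf{y}}_2)\|_\omega\leq\max\!\left\{\frac{Z_M(r_0)}{\omega},\frac{Z_\infty(r_0)}{1-\omega}\right\}\|\text{\textbf{y}}_1-\text{\textbf{y}}_2\|_\omega.
\end{equation*}
The hypothesis $p_M(r_0)<0$ combined with $Y_M\geq 0$ forces $Z_M(r_0)<\omega$, and similarly $Z_\infty(r_0)<1-\omega$, so the Lipschitz constant above is strictly less than $1$. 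The Banach fixed point theorem then produces the unique $\tilde{\text{\textbf{c}}}\in\overline{B_\omega(\bar{\text{\textbf{c}}},r_0)}$ with $T(\tilde{\text{\textbf{c}}})=\tilde{\text{\textbf{c}}}$, and the conclusion for the open ball is immediate.

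The main obstacle is essentially bookkeeping: packaging the anisotropic box constraint as a genuine norm so that the mean value argument can be run once rather than twice, and rescaling the sup-type $Z$-bounds into Lipschitz-type bounds on $DT$. Once those are in place, the estimate is a direct consequence of the sign conditions on $p_M$ and $p_\infty$.
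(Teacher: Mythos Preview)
Your argument is correct and follows a route that differs from the paper's in the contraction step. The self-mapping part is essentially identical in both proofs. For the contraction, the paper stays with the standard $\ell^2$ norm: it bounds the operator norm of $DT(\bar{\text{\textbf{c}}}+x_1)$ by $Z_M(r_0)+Z_\infty(r_0)$ and then \emph{adds} the two inequalities $p_M(r_0)<0$ and $p_\infty(r_0)<0$ to obtain $Z_M(r_0)+Z_\infty(r_0)<1$. You instead introduce the weighted max norm $\|\cdot\|_\omega$, turning $\overline{B_\omega(\bar{\text{\textbf{c}}},r_0)}$ into an honest closed ball, and use each inequality separately (together with $Y_M,Y_\infty\geq 0$) to force $Z_M(r_0)<\omega$ and $Z_\infty(r_0)<1-\omega$, giving the Lipschitz constant $\max\{Z_M(r_0)/\omega,\,Z_\infty(r_0)/(1-\omega)\}<1$. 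Your packaging is arguably cleaner: convexity and completeness of the box are immediate, and you never have to switch between the box constraint and the ambient $\ell^2$ norm mid-argument. The paper's version avoids introducing a new norm but pays for it with a small amount of norm juggling between the contraction and self-mapping estimates.
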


\begin{proof}
Due to the Banach Fixed Point Theorem, we only need to prove that $T\left(\overline{B_\omega(\Bar{\text{\textbf{c}}},r_0)}\right) \subseteq \overline{B_\omega(\Bar{\text{\textbf{c}}},r_0)}$ and that $T$ is a contraction when restricted to $\overline{B_\omega(\Bar{\text{\textbf{c}}},r_0)}$.

We first prove that $T$ restricted to $\overline{B_\omega(\Bar{\text{\textbf{c}}},r_0)}$ is a contraction. If $y_1,y_2 \in \overline{B_\omega(\Bar{\text{\textbf{c}}},r_0)}$, then by the Mean Value Theorem
\begin{align*}
    \|T(y_1) - T(y_2)\|_{\ell^2} &\leq \sup_{x\in \overline{B_\omega(\Bar{\text{\textbf{c}}},r_0)}}\|DT(x)\| \|y_1-y_2\|_{\ell^2} = \sup_{x_1\in \overline{B(r_0)}} \|DT(\Bar{\text{\textbf{c}}} + x_1)\| \|y_1-y_2\|_{\ell^2}
\end{align*}
Hence, we must show that $\|DT(\Bar{\text{\textbf{c}}} + x_1)\| < 1$ for $x_1 \in \overline{B(r_0)}$. Observe that
\begin{align*}
    \sup_{x_1\in \overline{B(r_0)}} \|DT(\Bar{\text{\textbf{c}}} + x_1)\| &= \frac{1}{r_0}\sup_{x_1,x_2 \in \overline{B(r_0)}} \|DT(\Bar{\text{\textbf{c}}} + x_1)x_2\|_{\ell^2} \\
    &\hspace{-2em} \leq \frac{1}{r_0} \bigg(\sup_{x_1,x_2 \in \overline{B(r_0)}} \|\Pi_M DT(\Bar{\text{\textbf{c}}} + x_1)x_2\|_{\ell^2} + \sup_{x_1,x_2 \in \overline{B(r_0)}} \|\Pi_\infty DT(\Bar{\text{\textbf{c}}} + x_1)x_2\|_{\ell^2} \bigg) \\
    &\hspace{-2em} \leq Z_M(r_0) + Z_\infty(r_0)
\end{align*}
So we have
\begin{equation*}
    \|T(y_1) - T(y_2)\|_{\ell^2} \leq (Z_M(r_0) + Z_\infty(r_0)) \|y_1-y_2\|_{\ell^2}
\end{equation*}
But since $p_M(r_0) < 0$ and $p_\infty(r_0) < 0$,
\begin{align*}
    Z_M(r_0)r_0 + Z_\infty(r_0)r_0 - r_0 &\leq (Z_M(r_0)r_0-\omega r_0+Y_M) +  (Z_\infty(r_0)r_0-(1-\omega)r_0+Y_\infty) \\
    &\hspace{2em} = p_M(r_0) + p_\infty(r_0) < 0
\end{align*}
Thus $Z_M(r_0) + Z_\infty(r_0) < 1$, and $T$ restricted to $\overline{B_\omega(\Bar{\text{\textbf{c}}},r_0)}$ is a contraction.

Now we must prove that $T\left(\overline{B_\omega(\Bar{\text{\textbf{c}}},r_0)}\right) \subseteq \overline{B_\omega(\Bar{\text{\textbf{c}}},r_0)}$. If $y \in \overline{B_\omega(\Bar{\text{\textbf{c}}},r_0)}$, then
\begin{align*}
    \|\Pi_M(T(y)-\Bar{\text{\textbf{c}}})\|_{\ell^2} &\leq \|\Pi_M(T(y)-T(\Bar{\text{\textbf{c}}}))\|_{\ell^2} + \|\Pi_M(T(\Bar{\text{\textbf{c}}}) -  \Bar{\text{\textbf{c}}})\|_{\ell^2} \\
    &\leq Z_M(r_0)\|y-\Bar{\text{\textbf{c}}}\|_{\ell^2} + Y_M \leq Z_M(r_0)r_0 + Y_M < \omega r_0
\end{align*}
and similarly for $\Pi_\infty(T(y)-\Bar{\text{\textbf{c}}})$
\begin{align*}
    \|\Pi_\infty(T(y)-\Bar{\text{\textbf{c}}})\|_{\ell^2} &\leq \|\Pi_\infty(T(y)-T(\Bar{\text{\textbf{c}}}))\|_{\ell^2} + \|\Pi_\infty(T(\Bar{\text{\textbf{c}}}) -  \Bar{\text{\textbf{c}}})\|_{\ell^2} \\
    &\leq Z_\infty(r_0)\|y-\Bar{\text{\textbf{c}}}\|_{\ell^2} + Y_\infty \leq Z_\infty(r_0)r_0 + Y_\infty < (1-\omega) r_0
\end{align*}
and hence $T(y) \in \overline{B_\omega(\Bar{\text{\textbf{c}}},r_0)}$.
\end{proof}

Thus, if the radii polynomial method is successful in finding an $r_0$, then the solution $\Bar{\text{\textbf{c}}}$ found by the numerical method is ``close" to the wavelet coefficients of true solution $\Tilde{\text{\textbf{c}}}$ in the $\ell^2(\R)$ sense, that is, $\|\Bar{\text{\textbf{c}}} - \Tilde{\text{\textbf{c}}}\|_{\ell^2} \leq r_0$. Or equivalently, the numerical approximation $\Bar{u}(t) = \Bar{\text{\textbf{c}}}^T\text{\textbf{h}}(t)$ is ``close" to the true solution $\Tilde{u}(t)$ in the $L^2([0,1])$ sense, that is, $\|\Bar{u}-\Tilde{u}\|_{L^2} \leq r_0$.
\section{Nonlinear terms}
\label{sect:nonlinear}

In this section we study quadratic nonlinearities in more depth. This may seem restrictive, but there are many interesting systems involving those, such as the Lorenz system. Furthermore, we believe that estimates for higher nonlinearities can be computed with similar techniques.

Consider two functions $u$ and $v$ such that $\dot{u},\dot{v}\in L^2([0,1])$. Their expansions into Haar wavelet integrals as in \eqref{eq:solution_expansion} are
\begin{align*}
    u(t) = u_0 + \sum_{i=1}^\infty c_iw_i(t) = u_0 + \text{\textbf{w}}^T(t)\text{\textbf{c}} \ , \ v(t) = v_0 + \sum_{i=1}^\infty d_iw_i(t) =  v_0 + \text{\textbf{w}}^T(t)\text{\textbf{d}}
\end{align*}
Thus, considering the product $u(t)v(t)$, we have
\begin{align*}
    u(t)v(t) = u_0v_0 + u_0\text{\textbf{w}}^T(t)\text{\textbf{d}} + v_0\text{\textbf{w}}^T(t)\text{\textbf{c}} + (W(t))(\text{\textbf{c}},\text{\textbf{d}})
\end{align*}
where, for $t \in [0,1]$, $(W(t))(\text{\textbf{c}},\text{\textbf{d}}) := \text{\textbf{c}}^T\text{\textbf{w}}(t)\text{\textbf{w}}^T(t)\text{\textbf{d}}$. A crucial observation is that, for any given $t \in [0,1]$, $W(t)$ \emph{is a symmetric bilinear form}. 

The next theorem shows that $W(t)(\text{\textbf{c}},\text{\textbf{d}}) \in L^2([0,1])$ for any pair $\text{\textbf{c}},\text{\textbf{d}} \in \ell^2(\R)$, which allows us to calculate its Haar transform and use the radii polynomial methods developed in Section~\ref{sect:radii_poly}. However, its proof is lengthy and will be left to \ref{apdx:biform} for clarity.

\begin{theorem}\label{thm:biform_haar_transform}
    The bilinear form $\left(W(\text{\normalfont{\textbf{c}}},\text{\normalfont{\textbf{d}}})\right)(t) := \text{\normalfont{\textbf{c}}}^T \text{\normalfont{\textbf{w}}}(t) \text{\normalfont{\textbf{w}(t)}} \text{\normalfont{\textbf{d}}}$ is bounded in $L^2([0,1])$ for all $\text{\normalfont{\textbf{c}}},\text{\normalfont{\textbf{d}}} \in \ell^2(\R)$, that is, there exists $C > 0$ such that
    \begin{equation*}
        \left\|W(\text{\normalfont{\textbf{c}}},\text{\normalfont{\textbf{d}}})\right\|_{L^2} \leq C \left\|\text{\normalfont{\textbf{c}}}\right\|_{\ell^2} \left\|\text{\normalfont{\textbf{d}}}\right\|_{\ell^2} \text{ for all } \text{\normalfont{\textbf{c}}},\text{\normalfont{\textbf{d}}} \in \ell^2(\R).
    \end{equation*}
    Additionally, making $\text{\normalfont{\textbf{a}}} := P^T\text{\normalfont{\textbf{c}}}$ and $\text{\normalfont{\textbf{b}}} := P^T\text{\normalfont{\textbf{d}}}$, we can write 
    \begin{equation}
        \text{\normalfont{\textbf{c}}}^T \text{\normalfont{\textbf{w}}}(t) \text{\normalfont{\textbf{w}(t)}} \text{\normalfont{\textbf{d}}} = \text{\normalfont{\textbf{a}}}^T\Omega(t)\text{\normalfont{\textbf{b}}} + \text{\normalfont{\textbf{a}}}^T\Omega^T(t)\text{\normalfont{\textbf{b}}} + \text{\normalfont{\textbf{a}}}^T\Theta(t)\text{\normalfont{\textbf{b}}}
    \end{equation}
    where the operator $\Omega(t)$ can be recursively defined as
    \begin{equation}\label{eq:biform_Omega}
        \Omega_1(t) = 0 \quad,\quad \Omega_{2m}(t) = 
    \begin{bmatrix}
        \Omega_{m}(t) & \Upsilon_m(t) \\[0.1in]
        0_{m \times m} & 0_{m \times m}
    \end{bmatrix}
    \end{equation}
    with $\Upsilon_m(t)$ being a $m \times m$ matrix defined element-wise as
    \begin{equation*}
        (\Upsilon_m)_{i,l}(t) = (H_m)_{i,l}\psi_{m+l}(t),
    \end{equation*} 
    and the operator $\Theta(t)$ can be represented as an infinite diagonal matrix given by
    \begin{equation}\label{eq:biform_Theta}
        \def\arraystretch{1.2}
        \Theta(t) = \left[
        \begin{array}{cccc}
            \psi_1^2(t) & \phantom{0} & \phantom{0} & \phantom{0} \\
            \phantom{0} & \psi_2^2(t) & \phantom{0} & \phantom{0} \\
            \phantom{0} & \phantom{0} & \psi_3^2(t) & \phantom{0} \\
            \phantom{0} & \phantom{0} & \phantom{0} & \ddots \\
        \end{array}
        \right]
    \end{equation}
    with zeros omitted for clarity. With the terms defined as above, 
    \begin{equation*}
        \mathcal{H}(\text{\normalfont{\textbf{c}}}^T \text{\normalfont{\textbf{w}}}(t) \text{\normalfont{\textbf{w}(t)}} \text{\normalfont{\textbf{d}}}) = \mathcal{H} (\text{\normalfont{\textbf{a}}}^T\Omega(t)\text{\normalfont{\textbf{b}}}) + \mathcal{H} (\text{\normalfont{\textbf{a}}}^T\Omega(t)\text{\normalfont{\textbf{b}}}) + \mathcal{H} (\text{\normalfont{\textbf{a}}}^T\Omega(t)\text{\normalfont{\textbf{b}}})
    \end{equation*}
    and
    \begin{equation}\label{haar_transforms_biform}
        \begin{aligned}             
            \mathcal{H} (\text{\normalfont{\textbf{a}}}^T\Omega(t)\text{\normalfont{\textbf{b}}}) &= (\Tilde{\Omega}^T \text{\normalfont{\textbf{a}}}) \odot \text{\normalfont{\textbf{b}}} \\
            \mathcal{H}(\text{\normalfont{\textbf{a}}}^T\Omega^T(t)\text{\normalfont{\textbf{b}}}) &= (\Tilde{\Omega}^T \text{\normalfont{\textbf{b}}}) \odot \text{\normalfont{\textbf{a}}} \\
            \mathcal{H}(\text{\normalfont{\textbf{a}}}^T\Theta(t)\text{\normalfont{\textbf{b}}}) &= \Gamma^T (\text{\normalfont{\textbf{a}}}\odot\text{\normalfont{\textbf{b}}})
        \end{aligned}
    \end{equation}
    where $\Tilde{\Omega}^T$ and $\Tilde{\Gamma}^T$ are recursively defined as
    \begin{equation*}
    	\begin{aligned}
    		\Tilde{\Omega}_1 = 0 \quad,\quad \Tilde{\Omega}_{2m} = 
    		\begin{bmatrix}
    			\Tilde{\Omega}_{m} & H_m \\
    			0_m & 0_m
    		\end{bmatrix}
    		\\
    		\Gamma_1 = 1 \quad,\quad \Gamma_{2m} = 
    		\begin{bmatrix}
    			\Gamma_m & 0_m \\
    			H_m^T & 0_m
    		\end{bmatrix}
	   	\end{aligned}
        \quad, \text{ for } m = 2^j \text{ and } j=0,1,2,3,...
    \end{equation*}
\end{theorem}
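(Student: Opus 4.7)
The plan has three main steps: the $L^2$ boundedness, the decomposition into $\Omega+\Omega^T+\Theta$, and the three Haar transform formulas. For boundedness, I would first establish the stronger statement that $u(t):=\sum_{i\geq 1}c_iw_i(t)$ lies in $L^\infty([0,1])$ with $\|u\|_{L^\infty}\leq C\|\text{\textbf{c}}\|_{\ell^2}$. The triangular functions $w_{j,k}$ at a fixed scale $j$ have pairwise disjoint supports and heights bounded by $2^{-j/2-1}$, so at each $t$ only one term per scale is active, and Cauchy--Schwarz over the scale index (using $\sum_{j\geq 0}2^{-j-2}=1/2$) yields a pointwise bound. Combined with $\|v\|_{L^2}=\|P^T\text{\textbf{d}}\|_{\ell^2}$ (the Haar transform is an isometry and $P^T$ is bounded by Proposition~\ref{prop:PTc_etimates}), H\"older's inequality gives $\|W(\text{\textbf{c}},\text{\textbf{d}})\|_{L^2}\leq C'\|\text{\textbf{c}}\|_{\ell^2}\|\text{\textbf{d}}\|_{\ell^2}$.

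For the decomposition, substituting $w_i(t)=\sum_l P_{i,l}\psi_l(t)$ gives
\begin{equation*}
u(t)v(t)=\sum_{p,q}a_pb_q\,\psi_p(t)\psi_q(t),\qquad \text{\textbf{a}}:=P^T\text{\textbf{c}},\quad \text{\textbf{b}}:=P^T\text{\textbf{d}},
\end{equation*}
and I would split this double sum by comparing the scale indices $j_p$ and $j_q$ of $\psi_p$ and $\psi_q$. The diagonal $p=q$ produces $\text{\textbf{a}}^T\Theta(t)\text{\textbf{b}}$; same-scale non-diagonal pairs vanish by disjoint support; strictly coarser--finer pairs contribute $\text{\textbf{a}}^T\Omega(t)\text{\textbf{b}}+\text{\textbf{a}}^T\Omega^T(t)\text{\textbf{b}}$. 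For $j_p<j_q$ the nesting property gives $\psi_p(t)\psi_q(t)=(H_{2^{j_q}})_{p,\,q-2^{j_q}}\psi_q(t)=(\Upsilon_{2^{j_q}})_{p,\,q-2^{j_q}}(t)$, and grouping by the scale of $q$ produces the block form of $\Omega_{2m}$ by induction on $m$.

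The Haar transform formulas are the heart of the theorem. Each coefficient expands as a triple integral $\int_0^1\psi_r\psi_p\psi_q\,dt$, to which I would apply the nesting property once more: whenever one wavelet among $\{\psi_r,\psi_p,\psi_q\}$ is strictly finer than the other two, their product equals a constant multiple of that finest wavelet, whose integral vanishes. For $\mathcal{H}(\text{\textbf{a}}^T\Omega(t)\text{\textbf{b}})_r$ only terms with $r=q$ survive, and evaluating $\int\psi_p\psi_q^2\,dt$ as the constant value of $\psi_p$ on $\supp\psi_q$ yields exactly $(\tilde\Omega^T\text{\textbf{a}})_r b_r$. The analogous calculation for $\mathcal{H}(\text{\textbf{a}}^T\Theta(t)\text{\textbf{b}})_r$ reduces to integrals $\int\psi_r\psi_p^2\,dt$, nonzero precisely when $\psi_r$ is no finer than $\psi_p$, and these entries package into $\Gamma^T(\text{\textbf{a}}\odot\text{\textbf{b}})$. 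The recursive forms of $\tilde\Omega_{2m}$ and $\Gamma_{2m}$ then follow by induction: doubling the truncation introduces a new scale $m$, and the new entries are values of coarser wavelets at centers of finest-scale wavelets, which assemble exactly into $H_m$ (respectively $H_m^T$).

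The main obstacle will be the index bookkeeping: verifying that the newly produced block at each doubling step corresponds exactly to $H_m$ in the position demanded by \eqref{eq:biform_Omega} and the analogous recursions for $\tilde\Omega$ and $\Gamma$, rather than a transpose or a rescaling. The underlying analytic facts are elementary disjoint-support and zero-mean cancellations, but matching them to the claimed block structure requires careful tracking of sign conventions and of which index ranges correspond to the new finest scale.
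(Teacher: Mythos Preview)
Your proposal is correct and covers all parts of the theorem. The decomposition of $\text{\textbf{h}}(t)\text{\textbf{h}}^T(t)$ into $\Omega+\Omega^T+\Theta$ via the nesting property, and the computation of the Haar transform formulas through the integrals $\int\psi_r\psi_p\psi_q\,dt$, match the paper's argument essentially point for point; the paper phrases the latter more in terms of block-matrix identities (e.g.\ an auxiliary lemma computing $(H_m^T\tilde\Omega_m)_{i,l}=\psi_l^2(t_i)$), but the underlying cancellations are exactly the ones you identify.

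Where you genuinely diverge is the $L^2$ boundedness. The paper does \emph{not} prove an $L^\infty$ bound on $u(t)=\sum_i c_iw_i(t)$. Instead it proceeds formally: it first writes down the candidate Haar coefficients $(\tilde\Omega^TP^T\text{\textbf{c}})\odot\text{\textbf{b}}$ and $\Gamma^T(\text{\textbf{a}}\odot\text{\textbf{b}})$, then proves directly that these sequences lie in $\ell^2(\R)$ with norm $\leq C\|\text{\textbf{c}}\|_{\ell^2}\|\text{\textbf{d}}\|_{\ell^2}$ (via block estimates on $\tilde\Omega^TP^T$ and $\Gamma^T$); the isometry of $\mathcal H$ then delivers $L^2$ boundedness and simultaneously validates the formal Haar-transform identities. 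Your route through $\|u\|_{L^\infty}\lesssim\|\text{\textbf{c}}\|_{\ell^2}$ and H\"older is shorter and more conceptual for boundedness alone, and it has the side benefit that once $u\in L^\infty$, multiplication by $u$ is bounded on $L^2$, so interchanging the integral $\int\psi_r\cdot uv$ with the series for $v$ (and then for $u$) is immediately justified. The paper's approach, by contrast, yields the explicit block-wise $\ell^2$ bounds on the Haar coefficients that are reused later in the radii-polynomial estimates of Section~\ref{sect:nonlinear}, so some of that work would still need to be done downstream even with your argument in hand.
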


We now present some estimates required for the radii polynomial method. The full proof for those estimates are lengthy and left \ref{apdx:estimates}. The main strategy consists in employing both the recursive block structures of the matrices from Theorems~\ref{thm:P_recursive_formula} and \ref{thm:biform_haar_transform} and the finite-infinite decomposition from \eqref{eq:finite_infinite_decomposition}. These estimates provide tighter bounds which increase the likelihood of finding an $r_0$ which satisfies Theorem~\ref{thm:radii_poly_fixed_point}. We believe that similar estimates may be applied for higher-degree polynomial nonlinearities.

\begin{proposition}\label{prop:estimates_finite_proj_finite_vectors}
    Given $\Bar{\text{\normalfont{\textbf{c}}}},\Bar{\text{\normalfont{\textbf{d}}}} \in \R^M$, $M = 2^{J+1}$ for some $J \geq 0$, and $\Bar{\text{\normalfont{\textbf{a}}}} = P^T\Bar{\text{\normalfont{\textbf{c}}}}$,  $\Bar{\text{\normalfont{\textbf{b}}}} = P^T\Bar{\text{\normalfont{\textbf{d}}}}$, the following estimates are valid:
    \begin{enumerate}[wide,label = \roman*)]
        \item $\Pi_M P^T\Bar{\text{\normalfont{\textbf{c}}}} = P_M^T \Bar{\text{\normalfont{\textbf{c}}}}$
        
        \item $\Pi_M \mathcal{H} (\Bar{\text{\normalfont{\textbf{a}}}}^T\Omega(t)\Bar{\text{\normalfont{\textbf{b}}}}) = (\Tilde{\Omega}_M^TP_M^T\Bar{\text{\normalfont{\textbf{c}}}}) \odot (P_M^T\Bar{\text{\normalfont{\textbf{d}}}})$
        
        \item $\Pi_M \mathcal{H} (\Bar{\text{\normalfont{\textbf{a}}}}^T\Theta(t)\Bar{\text{\normalfont{\textbf{b}}}}) = \Gamma^T_M (\Bar{\text{\normalfont{\textbf{a}}}}_M \odot \Bar{\text{\normalfont{\textbf{b}}}}_M) + \Gamma^T_{\infty,M} (\Bar{\text{\normalfont{\textbf{a}}}}_\infty \odot \Bar{\text{\normalfont{\textbf{b}}}}_\infty)$; moreover,
        \begin{equation*}
            \left\| \Gamma^T_\infty (\Bar{\text{\normalfont{\textbf{a}}}}_\infty \odot \Bar{\text{\normalfont{\textbf{b}}}}_\infty) \right\|_{\ell^2} \leq \frac{\sqrt{2} \, \|\Bar{\text{\normalfont{\textbf{c}}}}\|_{\ell^2} \|\Bar{\text{\normalfont{\textbf{d}}}}\|_{\ell^2} }{\left(4-\sqrt{2}\right)2^{\frac{3J}{2}+3}}.
        \end{equation*}
    \end{enumerate}
\end{proposition}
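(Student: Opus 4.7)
The plan is to exploit the block-recursive structures of $P$, $\Tilde{\Omega}$, and $\Gamma$ from Theorems~\ref{thm:P_recursive_formula} and~\ref{thm:biform_haar_transform}, together with the finite-infinite decomposition \eqref{eq:finite_infinite_decomposition}, and to use the $L^2$--$\ell^2$ isometry of $\mathcal{H}$ for the tail estimate. For (i), I would start from the explicit block expression \eqref{eq:PTc_expression} for $(P^T \Bar{\text{\textbf{c}}})^*_{2^j}$: since $\Bar{\text{\textbf{c}}} \in \R^M$ has only finitely many nonzero blocks, the infinite tail sum in \eqref{eq:PTc_expression} truncates for all output blocks within $\Pi_M$, and the resulting finite recursion is exactly what one obtains by unfolding \eqref{eq:PJ_rec_formula} up to size $M$, i.e., the action of $P_M^T$.

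For (ii), Theorem~\ref{thm:biform_haar_transform} gives $\mathcal{H}(\Bar{\text{\textbf{a}}}^T \Omega \Bar{\text{\textbf{b}}}) = (\Tilde{\Omega}^T \Bar{\text{\textbf{a}}}) \odot \Bar{\text{\textbf{b}}}$, and since $\Pi_M$ commutes with the Hadamard product coordinatewise, the claim reduces to showing $\Pi_M \Tilde{\Omega}^T \Bar{\text{\textbf{a}}} = \Tilde{\Omega}_M^T \Pi_M \Bar{\text{\textbf{a}}}$. Unfolding the recursion for $\Tilde{\Omega}_{2m}$ step by step shows that in the infinite $\Tilde{\Omega}$ the first $M$ columns vanish in all rows beyond $M$; transposing, the first $M$ rows of $\Tilde{\Omega}^T$ have no support beyond column $M$, which gives the identity. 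Part (i) then replaces $\Pi_M \Bar{\text{\textbf{a}}}$ by $P_M^T \Bar{\text{\textbf{c}}}$ and $\Pi_M \Bar{\text{\textbf{b}}}$ by $P_M^T \Bar{\text{\textbf{d}}}$. The first identity in (iii) follows the same template: start from $\mathcal{H}(\Bar{\text{\textbf{a}}}^T \Theta \Bar{\text{\textbf{b}}}) = \Gamma^T(\Bar{\text{\textbf{a}}} \odot \Bar{\text{\textbf{b}}})$, apply the decomposition \eqref{eq:finite_infinite_decomposition} to $\Gamma^T$, and use the splitting $\Bar{\text{\textbf{a}}} \odot \Bar{\text{\textbf{b}}} = (\Bar{\text{\textbf{a}}}_M \odot \Bar{\text{\textbf{b}}}_M) + (\Bar{\text{\textbf{a}}}_\infty \odot \Bar{\text{\textbf{b}}}_\infty)$.

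For the norm estimate in (iii), the plan is to sidestep a direct analysis of $\Gamma_\infty^T$ by working in $L^2$. With $\Bar{\text{\textbf{v}}}_\infty := \Bar{\text{\textbf{a}}}_\infty \odot \Bar{\text{\textbf{b}}}_\infty$, the isometry of $\mathcal{H}$ yields
\begin{equation*}
    \left\|\Gamma_\infty^T \Bar{\text{\textbf{v}}}_\infty\right\|_{\ell^2} \leq \left\|\Gamma^T \Bar{\text{\textbf{v}}}_\infty\right\|_{\ell^2} = \left\|\sum_{i > M} \Bar{a}_i \Bar{b}_i \, \psi_i^2\right\|_{L^2}.
\end{equation*}
Grouping the sum by scale $j$ and using that the $\{\psi_{j,k}^2\}_k$ at fixed $j$ have disjoint supports with $\|\psi_{j,k}^2\|_{L^2} = 2^{j/2}$, the triangle inequality across scales reduces the bound to a sum of the form $\sum_j 2^{j/2} \|\Bar{a}^{(j)} \odot \Bar{b}^{(j)}\|_{\ell^2}$, where $\Bar{a}^{(j)}, \Bar{b}^{(j)}$ collect the scale-$j$ coefficients. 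For finite $\Bar{\text{\textbf{c}}}$, the infinite sum in \eqref{eq:PTc_expression} vanishes at all tail scales, so $\Bar{a}^{(j)}$ reduces to a single scaled copy of $H_{2^j}^T \Bar{\text{\textbf{c}}}$; unitarity of $HT_{2^j}$ then gives explicit scale-$j$ norms of order $\|\Bar{\text{\textbf{c}}}\|/2^{j+2}$, and similarly for $\Bar{\text{\textbf{b}}}$. Bounding the Hadamard product coordinatewise and summing the resulting geometric series $\sum_{l\geq 1} 2^{-3l/2} = \sqrt{2}/(4-\sqrt{2})$ produces the stated bound.

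The main obstacle will be the bookkeeping in this last estimate: the precise block indexing in \eqref{eq:vec_block_division}, the shift between wavelet scales and block labels, and the specific inequality chosen for the Hadamard product all feed into the prefactor, so they must be chosen consistently to match the stated constant $\sqrt{2}/\bigl((4-\sqrt{2})\,2^{3J/2+3}\bigr)$. Everything else reduces to a routine unfolding of the recursive block structures.
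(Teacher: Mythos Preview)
Your proposal is correct. Parts (i), (ii), and the identity in (iii) match the paper's argument almost exactly: both unfold the recursive block structures of $P$, $\Tilde{\Omega}$ (the paper routes (ii) through Lemma~\ref{lem:P_Omega_structure} on $P\Tilde{\Omega}$, you go through $\Tilde{\Omega}^T$ directly, which is equivalent).

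For the norm estimate in (iii), your route is genuinely different from the paper's. The paper stays on the sequence side and works element-wise: it writes $(\Gamma^T(\Bar{\text{\textbf{a}}}_\infty\odot\Bar{\text{\textbf{b}}}_\infty))_i = \sum_{j>J}\sum_l (H_{2^j})_{i,l}\,\Bar a_{2^j+l}\,\Bar b_{2^j+l}$ using the block structure of $\Gamma$, substitutes the explicit expression $\Bar a_{2^j+l}=-2^{-(3j/2+2)}(H_{2^j}^T\Bar{\text{\textbf{c}}})_l$, and then exploits the orthogonality $H_{2^j}H_{2^j}^T=2^jI_{2^j}$ to collapse the double sum to $2^j c_i$; this yields a pointwise bound proportional to $|c_i|$, and taking the $\ell^2$-norm over $i$ gives the result. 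Your approach instead passes to $L^2$ via the isometry, uses disjoint supports of $\{\psi_{j,k}^2\}_k$ at each scale to compute the $L^2$-norm of a single scale exactly, and sums geometrically. Your method never touches the structure of $\Gamma$ and is arguably more robust; the paper's $HH^T$ trick is a neat shortcut that explains why the final bound is proportional to $\|\Bar{\text{\textbf{c}}}\|_{\ell^2}$ rather than a cruder product of block norms. Carrying your computation through with $\|\Bar a^{(j)}\|_{\ell^2}\le\|\Bar{\text{\textbf{c}}}\|/2^{j+2}$ in fact gives the stated constant with an extra factor of $1/2$ to spare, so your bookkeeping worry is harmless: any consistent choice lands at or below the target.
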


\begin{proposition}\label{prop:estimates_infinite_proj_finite_vectors}
    Given $\Bar{\text{\normalfont{\textbf{c}}}},\Bar{\text{\normalfont{\textbf{d}}}} \in \R^M$, $M = 2^{J+1}$ for some $J \geq 0$, and $\Bar{\text{\normalfont{\textbf{a}}}} = P^T\Bar{\text{\normalfont{\textbf{c}}}}$,  $\Bar{\text{\normalfont{\textbf{b}}}} = P^T\Bar{\text{\normalfont{\textbf{d}}}}$, the following estimates are valid:
    \begin{enumerate}[wide, label = \roman*)]
        
        \item $\displaystyle \left\|\Pi_\infty P^T\Bar{\text{\normalfont{\textbf{c}}}}\right\|_{\ell^2} \leq \frac{1}{\sqrt{3}} \frac{\left\|\Bar{\text{\normalfont{\textbf{c}}}}\right\|_{\ell^2}}{2^{J+2}}$
        
        \item $\displaystyle \left\| \Pi_\infty \mathcal{H} (\Bar{\text{\normalfont{\textbf{a}}}}^T\Omega(t)\Bar{\text{\normalfont{\textbf{b}}}}) \right\|_{\ell^2} \leq \frac{1}{\sqrt{3}} \frac{\left\|\Bar{\text{\normalfont{\textbf{c}}}}\right\|_{\ell^2} \left\|\Bar{\text{\normalfont{\textbf{d}}}}\right\|_{\ell^2}}{2^{2J+4}} $
        
        \item $\displaystyle \left\| \Pi_\infty \mathcal{H} (\Bar{\text{\normalfont{\textbf{a}}}}^T\Theta(t)\Bar{\text{\normalfont{\textbf{b}}}}) \right\|_{\ell^2} \leq \frac{1}{21\sqrt{7}} \frac{\left\|\Bar{\text{\normalfont{\textbf{c}}}}\right\|_{\ell^2} \left\|\Bar{\text{\normalfont{\textbf{d}}}}\right\|_{\ell^2} }{2^{3J+6}}.$
    \end{enumerate}
\end{proposition}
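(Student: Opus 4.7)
My plan is to prove each estimate in turn by exploiting the recursive block structures of $P^T$, $\tilde{\Omega}^T$ and $\Gamma^T$ from Theorems~\ref{thm:P_recursive_formula} and~\ref{thm:biform_haar_transform}, combined with the key observation that $\bar{\text{\textbf{c}}},\bar{\text{\textbf{d}}} \in \R^M$ have $\bar{\text{\textbf{c}}}_{2^q}^* = \bar{\text{\textbf{d}}}_{2^q}^* = 0$ for every $q \geq J+2$. This finite-support property causes most of the tail series appearing in the block expansions to collapse, producing sharper decay rates on blocks beyond index $M$.

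For part (i) I follow the template of the proof of Proposition~\ref{prop:PTc_etimates}: substituting the finite vector $\bar{\text{\textbf{c}}}$ into the block identity \eqref{eq:PTc_expression} makes the inner sum vanish whenever $j \geq J+1$, leaving only the single term $-\frac{1}{2^{3j/2+2}}H_{2^j}^T\bar{\text{\textbf{c}}}$. Unitarity of $HT_{2^j}$ then gives $\|(P^T\bar{\text{\textbf{c}}})_{2^j}^*\|_{\ell^2} \leq \|\bar{\text{\textbf{c}}}\|_{\ell^2}/2^{j+2}$, sharper by one factor of $2$ than the bound obtained in Proposition~\ref{prop:PTc_etimates} for a general $\text{\textbf{c}} \in \ell^2(\R)$, and squaring and summing the geometric series from $j = J+1$ yields the stated $1/\sqrt{3}$ prefactor. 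For part (ii), the recursion $\tilde{\Omega}_{2m}^T = \bigl[\begin{smallmatrix}\tilde{\Omega}_m^T & 0 \\ H_m^T & 0\end{smallmatrix}\bigr]$ gives the block identity $(\tilde{\Omega}^T\bar{\text{\textbf{a}}})_{2^k}^* = H_{2^{k-1}}^T\Pi_{2^{k-1}}\bar{\text{\textbf{a}}}$. Writing $\Pi_\infty[(\tilde{\Omega}^T\bar{\text{\textbf{a}}}) \odot \bar{\text{\textbf{b}}}]$ blockwise over $k \geq J+2$, each block is estimated via the Hadamard inequality $\|u \odot v\|_{\ell^2} \leq \|u\|_{\ell^\infty}\|v\|_{\ell^2}$, using a pointwise bound on $H_{2^{k-1}}^T\Pi_{2^{k-1}}\bar{\text{\textbf{a}}}$ that exploits the fact that only $k$ basis functions $\psi_l$ with $l \leq 2^{k-1}$ are nonzero at any fixed collocation point $t_p^{(k-1)}$, together with the decay of $\bar{\text{\textbf{b}}}_{2^k}^* = (P^T\bar{\text{\textbf{d}}})_{2^k}^*$ obtained in part (i); the block norms then form a geometric series in $k$ whose sum gives the claimed estimate.

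For part (iii), the recursion $\Gamma_{2m} = \bigl[\begin{smallmatrix}\Gamma_m & 0 \\ H_m^T & 0\end{smallmatrix}\bigr]$ implies that the column block $2^{j-1}+1,\ldots,2^j$ of $\Gamma^T$ equals $\bigl[\begin{smallmatrix}H_{2^{j-1}} \\ 0\end{smallmatrix}\bigr]$. Consequently, for row indices $i \in (2^{k-1},2^k]$ with $k \geq J+2$, only column blocks $j \geq k+1$ contribute to $(\Gamma^T(\bar{\text{\textbf{a}}}\odot\bar{\text{\textbf{b}}}))_i$, and the row-restricted submatrix $(H_{2^{j-1}})_{2^{k-1}+1:2^k,\,*}$ has operator norm exactly $\sqrt{2^{j-1}}$ by orthonormality of the level-$(k-1)$ wavelets. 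Combining this with Hadamard bounds on $(\bar{\text{\textbf{a}}}\odot\bar{\text{\textbf{b}}})_{2^j}^*$, which benefit from applying part~(i) to both $\bar{\text{\textbf{a}}}$ and $\bar{\text{\textbf{b}}}$, and summing the resulting double geometric series in $j$ and $k$ produces the final rate.

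The main obstacle is constant-tracking: at every block one must choose carefully between the $\ell^\infty$–$\ell^2$ and $\ell^2$–$\ell^\infty$ factorizations of the Hadamard product, and between $\ell^2$ and $\ell^\infty$ bounds on the blocks of $P^T\bar{\text{\textbf{c}}}$ and $P^T\bar{\text{\textbf{d}}}$, in order to recover the precise sharp prefactors $1/\sqrt{3}$ and $1/(21\sqrt{7})$ appearing in the statement. This careful bookkeeping is what motivates relegating the full computations to Appendix~\ref{apdx:estimates}.
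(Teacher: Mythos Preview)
Your proposal follows essentially the same route as the paper: blockwise decomposition via the recursive structures of $P^T$, $\tilde{\Omega}^T$ and $\Gamma^T$, exploitation of $\bar{\text{\textbf{c}}}_{2^q}^* = \bar{\text{\textbf{d}}}_{2^q}^* = 0$ for $q \geq J+1$ to kill the tail sums in the block expansions, Hadamard-type factorizations, and geometric-series summation over the remaining blocks. Part (i) is identical to the paper's argument; in parts (ii)--(iii) the paper differs from you only in minor tactical choices --- in (ii) it places the $\ell^\infty$ bound on the factor $H_{2^j}^T\bar{\text{\textbf{d}}}$ via $\max \leq \|\cdot\|_{\ell^2}$ rather than your pointwise wavelet-support count on $H^T\Pi\bar{\text{\textbf{a}}}$, and in (iii) it invokes the orthogonality $H_{2^p}H_{2^p}^T = 2^p I_{2^p}$ to collapse the inner sum directly to $2^p|c_i|$ rather than your row-restricted submatrix operator-norm bound.
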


\begin{proposition}\label{prop:estimates_finite_proj_infinite_vectors}
    Given $\text{\normalfont{\textbf{x}}},\text{\normalfont{\textbf{y}}} \in \ell^2(\R)$ and $\Bar{\text{\normalfont{\textbf{c}}}} \in \R^M$,
    \begin{enumerate}[label=\roman*)]
        \item $\displaystyle \left\| (P_{\infty,M}^T\,\text{\normalfont{\textbf{y}}}_\infty) \right\|_{\ell^2} \leq \frac{\left\| \text{\normalfont{\textbf{y}}} \right\|_{\ell^2}}{2^{J+2}}$
        
        \item $\displaystyle \left\| \left(\Tilde{\Omega}^TP^T\right)_{M,\infty} \text{\normalfont{\textbf{y}}}_\infty \right\|_{\ell^2} \leq \left(1+\sqrt{2}\right) \frac{\left\|\text{\normalfont{\textbf{y}}}\right\|_{\ell^2}}{2^\frac{J+3}{2}}$
        
        \item $\displaystyle \left\| \Gamma_{\infty,M}^T(\Bar{\text{\normalfont{\textbf{a}}}}_\infty \odot \Pi_\infty P^T\text{\normalfont{\textbf{y}}}) \right\|_{\ell^2} \leq \frac{\sqrt{2}}{4-\sqrt{2}} \frac{\left\|\Bar{\text{\normalfont{\textbf{c}}}}\right\|_{\ell^2} \left\|\text{\normalfont{\textbf{y}}}\right\|_{\ell^2}}{2^{\frac{3J}{2}+3}}$
         
        \item $\displaystyle
            \left\| A_M\Pi_M \mathcal{H}(\text{\normalfont{\textbf{x}}}^TP\, \Omega(t) P^T \text{\normalfont{\textbf{y}}}) \right\|_{\ell^2} \leq K_1 \|\text{\normalfont{\textbf{x}}}\|_{\ell^2} \|\text{\normalfont{\textbf{y}}}\|_{\ell^2}$
         
        \item $\displaystyle \left\|A_M\Pi_M\mathcal{H}(\text{\normalfont{\textbf{x}}}^T P\Theta(t)P^T \text{\normalfont{\textbf{y}}})\right\|_{\ell^2} \leq K_2 \|\text{\normalfont{\textbf{x}}}\|_{\ell^2} \|\text{\normalfont{\textbf{y}}}\|_{\ell^2}$
    \end{enumerate}

    where
    \begin{align*}
        K_1 &= 
        \left\|A_M\,\text{\normalfont{diag}}\left(\|(P_M^T)_{i,*}\|_{\ell^2}\right)\, \Tilde{\Omega}_M^TP_M^T \right\| + \frac{\|A_M\Tilde{\Omega}_M^T P_M^T\|}{2^{J+2}} + \frac{(1+\sqrt{2})\|A_MP_M^T\|}{2^\frac{J+3}{2}} +  \frac{(1+\sqrt{2})\|A_M\|}{2^\frac{3J+7}{2}}\\
        K_2 &= \left\| A_M\Gamma_M^T \text{\normalfont{diag}}(\|(P_M^T)_{i,*}\|_\ell^2) P_M^T \right\| + \frac{\left\| A_M \Gamma_M^T P_M^T\right\|}{2^{J+1}} + \frac{\left\| A_M\Gamma_M^T \right\|}{2^{2J+4}} + \frac{\sqrt{2}\|A_M\|}{(4-\sqrt{2})2^{\frac{3J}{2}+4}}
    \end{align*}
\end{proposition}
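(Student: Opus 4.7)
The approach is to prove each of the five inequalities by exploiting the recursive block structures of $P$, $\Tilde{\Omega}$, and $\Gamma$ from Theorems~\ref{thm:P_recursive_formula} and \ref{thm:biform_haar_transform}, combined with the unitarity of the discrete Haar transform $HT_M$ and the finite--infinite decomposition \eqref{eq:finite_infinite_decomposition}. Items (i)--(iii) are single-variable estimates that largely follow the template of Proposition~\ref{prop:PTc_etimates}, while items (iv)--(v) are bilinear and require decomposing both input vectors into their finite and infinite parts before estimating each resulting cross-term separately.

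For item (i), I would substitute $\text{\textbf{c}} = \text{\textbf{y}}_\infty$ into the explicit block formula \eqref{eq:PTc_expression}; since $\Pi_{2^j}\text{\textbf{y}}_\infty = 0$ for every $j \leq J$, the first term of that identity vanishes and only the tail $q > J$ survives. Unitarity of each $HT_{2^q}$ controls each surviving block, and a geometric series in $q$ yields the uniform bound $\|(P^T\text{\textbf{y}}_\infty)^*_{2^j}\|_{\ell^2} \leq \|\text{\textbf{y}}\|_{\ell^2}/2^{J+2}$; summing the squares over $j \leq J$ then gives the stated estimate. Item (ii) is analogous but acts on the compound operator $\Tilde{\Omega}^T P^T$: unwinding the recursions for $\Tilde{\Omega}_{2m}$ and $P_{2m}$ simultaneously expresses the block $(\Tilde{\Omega}^T P^T)_{M,\infty}\text{\textbf{y}}_\infty$ as a sum whose terms pair a single $H_{2^q}^T$-factor from $\Tilde{\Omega}$ with a single $H_{2^q}$-factor from $P$; unitarity and geometric summation then produce the $(1+\sqrt{2})/2^{(J+3)/2}$ constant. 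For item (iii), I would use the Hadamard bound $\|u \odot v\|_{\ell^2} \leq \|u\|_{\ell^\infty}\|v\|_{\ell^2}$ to decouple $\Bar{\text{\textbf{a}}}_\infty$ from $\Pi_\infty P^T\text{\textbf{y}}$, bound each factor via Proposition~\ref{prop:PTc_etimates} (applied to $\Bar{\text{\textbf{c}}}$ and to $\text{\textbf{y}}$ respectively), and bound the operator norm of $\Gamma_{\infty,M}^T$ by unwinding the $\Gamma$ recursion; the resulting geometric summation produces the $\sqrt{2}/(4-\sqrt{2})$ constant.

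For items (iv) and (v), I would use Theorem~\ref{thm:biform_haar_transform} to rewrite the Haar-transformed bilinear form as $(\Tilde{\Omega}^T P^T \text{\textbf{x}}) \odot (P^T\text{\textbf{y}})$ in case (iv) and as $\Gamma^T(P^T\text{\textbf{x}} \odot P^T\text{\textbf{y}})$ in case (v). The identity $\Pi_M(u \odot v) = \Pi_M u \odot \Pi_M v$ (valid because the supports of $\Pi_M$ and $\Pi_\infty$ are disjoint) pushes $\Pi_M$ through the Hadamard product; decomposing $P^T\text{\textbf{x}}$ and $P^T\text{\textbf{y}}$ via the finite--infinite decomposition then produces four cross-terms: finite-finite, finite-infinite, infinite-finite, and infinite-infinite. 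The purely finite-finite term is bounded by the Cauchy--Schwarz estimate $|(A_M(u\odot v))_j| \leq \sum_i |(A_M)_{ji}||u_i| \|(P_M^T)_{i,*}\|_{\ell^2} \|\text{\textbf{y}}\|_{\ell^2}$, which places the row-norm diagonal inside the matrix product and yields the first summand of $K_1$ (or $K_2$). The two mixed cross-terms extract the infinite factor as a scalar using items (i) or (ii), leaving finite matrix products such as $A_M\Tilde{\Omega}_M^T P_M^T$ or $A_M P_M^T$; these produce the middle terms of $K_1$ (resp.~$K_2$). The purely infinite-infinite cross-term combines both infinite estimates with $\|A_M\|$ out front, giving the final summand. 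Case (v) additionally splits $\Gamma^T = \Gamma_M^T \Pi_M + \Gamma_{\infty,M}^T \Pi_\infty$ before the above decomposition, and item (iii) (generalized from $\R^M$ to $\ell^2$ inputs) handles the $\Gamma_{\infty,M}^T$ piece, contributing the $\sqrt{2}\|A_M\|/((4-\sqrt{2})2^{3J/2+4})$ summand of $K_2$.

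The main obstacle is the careful bookkeeping in items (iv)--(v): each cross-term must land on the correct summand of $K_1$ or $K_2$ with the correct numerical constant, and the Cauchy--Schwarz for the finite-finite cross-term must be arranged so that $\diag(\|(P_M^T)_{i,*}\|_{\ell^2})$ sits sandwiched between $A_M$ and the remaining matrix, rather than being absorbed into a product of operator norms $\|A_M\|\cdot(\cdots)$. The recursive unfolding of $\|\Gamma_{\infty,M}^T\|$ and $\|(\Tilde{\Omega}^T P^T)_{M,\infty}\|$ is similarly delicate, yielding the specific constants $(1+\sqrt{2})$ and $\sqrt{2}/(4-\sqrt{2})$ whose derivation depends on the exact form of the recursive block structures of Theorems~\ref{thm:P_recursive_formula} and \ref{thm:biform_haar_transform}.
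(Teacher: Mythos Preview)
Your overall strategy matches the paper's, and items (iv) and (v) are essentially correct: the paper also splits $\Pi_M$ of the Hadamard expression into four cross-terms according to the finite--infinite decomposition of $\text{\textbf{x}}$ and $\text{\textbf{y}}$, uses the row-norm-diagonal trick (stated there as a separate lemma, $\|A_M(B_M\Bar{\text{\textbf{x}}}\odot C_M\Bar{\text{\textbf{y}}})\|\le \|A_M\,\diag(\|(C_M)_{l,*}\|)\,B_M\|\,\|\Bar{\text{\textbf{x}}}\|\,\|\Bar{\text{\textbf{y}}}\|$) for the finite--finite piece, and feeds items (i)--(iii) into the mixed and infinite pieces.

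There is, however, a real gap in item (i). Your plan is to bound each output block $(P^T\text{\textbf{y}}_\infty)^*_{2^j}$, $j\le J$, by the \emph{same} constant $\|\text{\textbf{y}}\|_{\ell^2}/2^{J+2}$ and then ``sum the squares over $j\le J$.'' Since there are $J+2$ such blocks (including the first coordinate), this yields only $\sqrt{J+2}\,\|\text{\textbf{y}}\|_{\ell^2}/2^{J+2}$, not the stated bound. The paper avoids this loss by not splitting the output into $j$-blocks at all: it writes the \emph{whole} vector $P_{\infty,M}^T\text{\textbf{y}}_\infty\in\R^M$ as the single sum $\sum_{q>J}2^{-(3q/2+2)}(H_{2^q})_{1:M,*}\,\text{\textbf{y}}^*_{2^q}$ and applies the triangle inequality once over $q$, so the norm $\|(H_{2^q})_{1:M,*}\|\le 2^{q/2}$ enters once per $q$ rather than once per output block.

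Item (iii) has a more serious issue. You propose to decouple the Hadamard product globally and then ``bound the operator norm of $\Gamma_{\infty,M}^T$.'' But $\Gamma_{\infty,M}^T$ is \emph{not} bounded on $\ell^2$: its $q$-th block is $(H_{2^q})_{1:M,*}$ with norm $2^{q/2}$, which grows without bound. The paper does not separate $\Gamma$ from the Hadamard input; it writes
\[
\Gamma_{\infty,M}^T(\Bar{\text{\textbf{a}}}_\infty\odot\Pi_\infty P^T\text{\textbf{y}})
=\sum_{q>J}(H_{2^q})_{1:M,*}\bigl(\Bar{\text{\textbf{a}}}^*_{2^q}\odot(P^T\text{\textbf{y}})^*_{2^q}\bigr)
\]
and bounds the $q$-th summand by $2^{q/2}\cdot\|\Bar{\text{\textbf{a}}}^*_{2^q}\|_{\ell^\infty}\cdot\|(P^T\text{\textbf{y}})^*_{2^q}\|_{\ell^2}\le 2^{q/2}\cdot 2^{-(q+2)}\|\Bar{\text{\textbf{c}}}\|\cdot 2^{-(q+1)}\|\text{\textbf{y}}\|$. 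The growth $2^{q/2}$ from $H_{2^q}$ is beaten only because it is interleaved, block by block, with the combined decay $2^{-(2q+3)}$ of the Hadamard factors; the geometric series you invoke converges for exactly this reason, not because $\Gamma_{\infty,M}^T$ itself has a finite norm.
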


\begin{proposition}\label{prop:estimates_infinite_proj_infinite_vectors}
    Given $\text{\normalfont{\textbf{x}}},\text{\normalfont{\textbf{y}}} \in \ell^2(\R)$ and $\Bar{\text{\normalfont{\textbf{c}}}} \in \R^M$, the following estimates are valid:
    \begin{enumerate}[label=\roman*)]
        
        \item $\displaystyle  \left\|\Pi_\infty\mathcal{H} (\Bar{\text{\normalfont{\textbf{c}}}}^T \text{\normalfont{\textbf{w}}}(t) \text{\normalfont{\textbf{w}}}^T(t) \text{\normalfont{\textbf{y}}}) \right\|_{\ell^2} \leq D_1 \frac{\left\|\Bar{\text{\normalfont{\textbf{c}}}}\right\|_{\ell^2} \left\|\text{\normalfont{\textbf{y}}}\right\|_{\ell^2}}{2^{\frac{3J}{2}}} \quad,\quad D_1 = \frac{1}{8\sqrt{7}} \left( 3+\frac{\sqrt{2}}{4} +  \frac{4}{4-\sqrt{2}} \right);$ 
        
        \item $\displaystyle \left\|\Pi_\infty\mathcal{H} (\text{\normalfont{\textbf{x}}}^T \text{\normalfont{\textbf{w}}}(t) \text{\normalfont{\textbf{w}}}^T(t) \text{\normalfont{\textbf{y}}}) \right\|_{\ell^2} \leq D_2 \frac{\left\|\text{\normalfont{\textbf{x}}}\right\|_{\ell^2} \left\|\text{\normalfont{\textbf{y}}}\right\|_{\ell^2}}{2^{\frac{3J}{2}+2}} \quad,\quad D_2 = \frac{8+6\sqrt{2}}{\sqrt{7}\left(4-\sqrt{2}\right)}$.
    \end{enumerate}
\end{proposition}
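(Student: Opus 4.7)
The plan is to apply Theorem~\ref{thm:biform_haar_transform} to decompose
\[
\mathcal{H}\bigl(\text{\textbf{x}}^T \text{\textbf{w}}(t) \text{\textbf{w}}^T(t) \text{\textbf{y}}\bigr) = (\Tilde{\Omega}^T \text{\textbf{a}}) \odot \text{\textbf{b}} + (\Tilde{\Omega}^T \text{\textbf{b}}) \odot \text{\textbf{a}} + \Gamma^T(\text{\textbf{a}} \odot \text{\textbf{b}}),
\]
where $\text{\textbf{a}} = P^T \text{\textbf{x}}$ and $\text{\textbf{b}} = P^T \text{\textbf{y}}$, then project onto $\Pi_\infty$ and bound each of the three pieces separately via the triangle inequality.

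For part (i), since $\Bar{\text{\textbf{c}}} \in \R^M$ has no tail, only $\text{\textbf{y}}$ contributes nontrivially. I would write $\text{\textbf{y}} = \text{\textbf{y}}_M + \text{\textbf{y}}_\infty$ via the finite--infinite decomposition and split each of the three pieces accordingly. The contributions involving $\text{\textbf{y}}_M$ are bilinear forms on finite vectors, so Proposition~\ref{prop:estimates_infinite_proj_finite_vectors} applies directly, yielding bounds of orders $2^{-(J+2)}$, $2^{-(2J+4)}$ and $2^{-(3J+6)}$, all faster than $2^{-3J/2}$. The contributions involving $\text{\textbf{y}}_\infty$ give the dominant piece: using the tail estimate $\|\Pi_\infty P^T \text{\textbf{y}}\|_{\ell^2} \leq \|\text{\textbf{y}}\|_{\ell^2}/(\sqrt{3}\cdot 2^{J+2})$ from Proposition~\ref{prop:PTc_etimates} combined with the recursive block structure of $\Tilde{\Omega}$ and $\Gamma$, the factor $\frac{4}{4-\sqrt{2}}$ in $D_1$ is extracted from the $\Gamma$ piece by invoking Proposition~\ref{prop:estimates_finite_proj_infinite_vectors}~(iii). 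Summing the three contributions then yields the stated $D_1$.

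For part (ii), I would split $\text{\textbf{x}} = \text{\textbf{x}}_M + \text{\textbf{x}}_\infty$; the part $\text{\textbf{x}}_M^T \text{\textbf{w}}\text{\textbf{w}}^T \text{\textbf{y}}$ is handled by part (i). For the remaining $\text{\textbf{x}}_\infty^T \text{\textbf{w}}\text{\textbf{w}}^T \text{\textbf{y}}$, splitting $\text{\textbf{y}} = \text{\textbf{y}}_M + \text{\textbf{y}}_\infty$ and using the symmetry of $W$ invokes part (i) again on the $(\text{\textbf{x}}_\infty, \text{\textbf{y}}_M)$ cross-term, while the $(\text{\textbf{x}}_\infty, \text{\textbf{y}}_\infty)$ cross-term is bounded directly using the tail estimate of $\Pi_\infty P^T$ on both factors together with the unitarity of the Haar blocks inside $\Tilde{\Omega}$ and $\Gamma$. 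Since two tail factors bring an additional $2^{-(J+1)}$ each, this last cross-term decays faster than $2^{-3J/2-2}$ and is absorbed into the constant $D_2$.

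The main obstacle is the bookkeeping inside the $\Gamma^T(\text{\textbf{a}} \odot \text{\textbf{b}})$ piece, where the Hadamard product couples entries of $\text{\textbf{a}}$ and $\text{\textbf{b}}$ across dyadic blocks of different sizes. The key step is to partition the tail block-by-block, exploit the block-triangular structure of $\Gamma$ with its $H_m = \sqrt{m}\,HT_m$ unitary building blocks, and sum the resulting geometric series in $2^{-3j}$ to produce the $\sqrt{7}$ factor appearing in both $D_1$ and $D_2$. Aggregating all pieces across all finite--infinite cross-terms and collecting the constants then delivers the explicit expressions for $D_1$ and $D_2$.
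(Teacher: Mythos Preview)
Your three-piece decomposition via Theorem~\ref{thm:biform_haar_transform} matches the paper, but your organizational scheme diverges from it and the specific claim that it delivers the exact constants $D_1$, $D_2$ is not right.

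The paper never splits $\text{\textbf{y}}$ (or $\text{\textbf{x}}$) into finite and infinite parts. Instead, for each of the three pieces it works directly block-by-block: for every $j>J$ it bounds $\big\|(\cdot)_{2^j}^*\big\|_{\ell^2}$ using the recursive structures of $P$, $\Tilde{\Omega}$ and $\Gamma$ together with the block estimates \eqref{eq:PTc_block_estimate_infinite_vector}, \eqref{eq:PTc_block_estimate_finite_vector}, \eqref{eq:OmegaT_PT_c_block_estimate}, and then sums the geometric series $\sum_{j>J}2^{-3j}$ (this is where the $\sqrt{7}$ comes from). For part (i) the hypothesis $\Bar{\text{\textbf{c}}}\in\R^M$ is used only to sharpen the per-block bound on $(P^T\Bar{\text{\textbf{c}}})_{2^j}^*$ via \eqref{eq:PTc_block_estimate_finite_vector}; $\text{\textbf{y}}$ is kept whole throughout. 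For part (ii) the argument is the same with both vectors treated as generic $\ell^2$ elements via \eqref{eq:PTc_block_estimate_infinite_vector}. No recursion on part (i) is used.

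Your finite--infinite splitting route would produce \emph{some} valid bound, but each extra triangle inequality (the $(\Bar{\text{\textbf{c}}},\text{\textbf{y}}_M)$, $(\Bar{\text{\textbf{c}}},\text{\textbf{y}}_\infty)$ split in (i), and the three cross-terms in (ii)) inflates the constant relative to the paper's single block-wise estimate; you will not land on the stated $D_1$, $D_2$ this way. Also, your invocation of Proposition~\ref{prop:estimates_finite_proj_infinite_vectors}~(iii) is misplaced: that result bounds the \emph{finite} projection $\Gamma_{\infty,M}^T(\cdot)$, whereas here you need the $\Pi_\infty$ part of $\Gamma^T(\text{\textbf{a}}\odot\text{\textbf{b}})$. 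The relevant computation (which does produce the $\frac{\sqrt{2}}{4-\sqrt{2}}$-type constant) is the direct block estimate carried out in the paper's proof of the $\Theta$-piece, not that proposition.
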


\section{Examples}
\label{sect:examples}

In this section we illustrate the implementation of our method by means of three examples: the logistic equation, the logistic equation with a discontinuous forcing term, and the Lorenz system. The method was implemented in MATLAB R2021b using the INTLAB package for interval arithmetic \cite{intlabdoc}. The files for these examples are available at \url{https://github.com/gknakassima/RigComp-HaarWavelet}.

% %%%%%%%%%%%%%%%%%%%%%%%%%%%%%%%%%%%%%%%%%%%%%%%%%%%%%%%%%%%%%%%%%%%%%%%%
% Logistic equation
% %%%%%%%%%%%%%%%%%%%%%%%%%%%%%%%%%%%%%%%%%%%%%%%%%%%%%%%%%%%%%%%%%%%%%%%%
\subsection{Logistic equation}\label{subsect:logistic}
As a first simple example we look at the logistic equation
\begin{equation}\label{eq:logistic_equation}
    \dot{u} = \lambda u(1 - u) \quad,\quad u(0) = u_0
\end{equation}
since it has a polynomial nonlinearity and its analytical solution is given by
$u(t) = \frac{u_0e^{\lambda t}}{1-u_0+u_0e^{\lambda t}}$. Using the expansions in \eqref{eq:solution_expansion}, we have the functional equation equivalent to \eqref{eq:logistic_equation}:
\begin{equation}\label{eq:logistic_inf_matrix_form}
    \text{\textbf{c}}^T\text{\textbf{h}}(t) - \lambda\left(\text{\textbf{c}}^T\text{\textbf{w}}(t) + u_0 - u_0^2 - 2u_0 \text{\textbf{c}}^T\text{\textbf{w}}(t) - (W(t))(\text{\textbf{c}},\text{\textbf{c}})\right) = 0.
\end{equation}

\subsubsection{Obtaining a numerical approximation}

In order to obtain a finite-dimensional approximation $\Bar{\text{\textbf{c}}}$ of the solution, we first consider a truncated version of our matrix equation. Given a resolution level $J > 0$ and making $M=2^{J+1}$, we apply the projection $\Pi_M$ to all sequences of \eqref{eq:logistic_inf_matrix_form}, obtaining
\begin{equation*}
    \text{\textbf{c}}_M^T\text{\textbf{h}}_M(t) - \lambda\left(\text{\textbf{c}}_M^T\text{\textbf{w}}_M(t) + u_0 - u_0^2 - 2u_0 \text{\textbf{c}}_M^T\text{\textbf{w}}_M(t) - (W_M(t))(\text{\textbf{c}}_M,\text{\textbf{c}}_M)\right) = 0
\end{equation*}
where, for $\text{\textbf{a}}_M,\text{\textbf{b}}_M \in \R^M$, $(W_M(t))(\text{\textbf{a}}_M,\text{\textbf{b}}_M) := \text{\textbf{a}}_M^T\text{\textbf{w}}_M(t)\text{\textbf{w}}_M^T(t)\text{\textbf{b}}_M$. Since the equation holds for all $t \in [0,1]$, we sample it at the times $t_l = \frac{l-0.5}{2^J}$ for $l = 1,...,M$. Recalling that 
\begin{equation*}
    H_M = [\text{\textbf{h}}_M(t_1), \text{\textbf{h}}_M(t_2),...,\text{\textbf{h}}_M(t_l),...,\text{\textbf{h}}_M(t_{2^{J+1}})],
\end{equation*}
we can organize the time samples in matrix form as
\begin{equation*}
    \text{\textbf{c}}_M^TH_M + \lambda(2u_0-1)\text{\textbf{c}}_M^TP_MH_M + \lambda(u_0^2-u_0)\text{\textbf{e}}^T + \lambda(\text{\textbf{W}}_M(\text{\textbf{c}}_M,\text{\textbf{c}}_M))^T = 0
\end{equation*}
with $\text{\textbf{e}}$ and $\text{\textbf{W}}_M(\text{\textbf{c}}_M,\text{\textbf{c}}_M)$ being $M \times 1$ vectors given by
\begin{equation*}
    \text{\textbf{e}} := (1,1,...,1)^T \quad,\quad  \big( \text{\textbf{W}}_M(\text{\textbf{c}}_M,\text{\textbf{c}}_M) \big)_i := (W_M(t_i))(\text{\textbf{c}}_M,\text{\textbf{c}}_M).
\end{equation*}
By transposing this system and multiplying by $(H_M^T)^{-1} = \frac{1}{M}H_M$ we finally arrive at the equation to solve numerically:
\begin{equation}\label{eq:logistic_numerical_equation}
    \text{\textbf{c}}_M + \lambda(2u_0-1)P_M^T\text{\textbf{c}}_M + \frac{\lambda(u_0^2-u_0)}{M}H_M\text{\textbf{e}} + \frac{\lambda}{M}H_M\text{\textbf{W}}_M(\text{\textbf{c}}_M,\text{\textbf{c}}_M) = 0.
\end{equation}

As this is a nonlinear equation, we use Newton's method. Define  $F_M: \R^M \to \R^M$ as
\begin{equation}\label{eq:logistic_F_newton_method}
    F_M(\text{\textbf{c}}_M) := \text{\textbf{c}}_M + \lambda(2u_0-1)P_M^T\text{\textbf{c}}_M + \frac{\lambda(u_0^2-u_0)}{M}H_M\text{\textbf{e}} + \frac{\lambda}{M}H_M\text{\textbf{W}}_M(\text{\textbf{c}}_M,\text{\textbf{c}}_M)
\end{equation}
Thus, we apply Newton's method by iteratively calculating
\begin{equation}\label{eq:newton_method}
    \text{\textbf{c}}^{p+1}_M = \text{\textbf{c}}^{p}_M - A_M(\text{\textbf{c}}^{p}_M) F_M(\text{\textbf{c}}^{p}_M)
\end{equation}
where $A_M(\text{\textbf{c}}^{p}_M)$ is a numerical approximation for $\Big(DF_M(\text{\textbf{c}}^{p}_M)\Big)^{-1}$ and $\text{\textbf{c}}^{p}_M$ is the result from the $p$-th iteration.

\subsubsection{Estimates for the radii polynomials}

Here, we provide the bounds for Theorem~\ref{thm:radii_poly_fixed_point}. The maps $T$ and $DT$ for the fixed point theorem are given by \eqref{eq:T_fixed_point} and \eqref{eq:DT_fixed_point}, respectively. For this example, the map $F:\ell^2(\R) \to \ell^2(\R)$ is given as
\begin{equation}\label{eq:logistic_F_expression}
    F(\text{\textbf{x}}) := \lambda(u_0^2 - u_0)e_1 + \text{\textbf{x}} + \lambda(2u_0 - 1) P^T\text{\textbf{x}} + \lambda \mathcal{H}\left(\text{\textbf{x}}^T\text{\textbf{w}}(t)\text{\textbf{w}}^T(t)\text{\textbf{x}}\right).
\end{equation}
and its derivative $DF(\text{\textbf{x}})$ applied to $\text{\textbf{y}}\in \ell^2(\R)$ is given by
\begin{equation}\label{eq:logistic_DF_expression}
    (DF(\text{\textbf{x}}))\text{\textbf{y}} = \text{\textbf{y}} + \lambda(2u_0 - 1)P^T\text{\textbf{y}} + 2\lambda \mathcal{H}(\text{\textbf{x}}^T\text{\textbf{w}}(t)\text{\textbf{w}}^T(t)\text{\textbf{y}}).
\end{equation}
where the last equality comes from the symmetry of the bilinear form. 

Before proceeding, it is worth outlining the general strategy for the estimates. We separate the operator matrices according to the finite-infinite decomposition in \eqref{eq:finite_infinite_decomposition}. Then, all the finite-dimensional parts are collected together and left for the computer to calculate, while we use the analytic estimates from Section~\ref{sect:nonlinear} for the infinite parts. 

\begin{itemize}

    % YM bound
    \item $Y_M$: Using the decomposition of $\lambda\mathcal{H}\left(\Bar{\text{\textbf{c}}}^T\text{\textbf{w}}(t)\text{\textbf{w}}^T(t)\Bar{\text{\textbf{c}}} \right)$ and Proposition~\ref{prop:estimates_finite_proj_finite_vectors},
\end{itemize}
\begin{align*}
    \Pi_M(T(\Bar{\text{\textbf{c}}}) - \Bar{\text{\textbf{c}}}) &= - A_M \Pi_M\mathcal{H}F(\Bar{\text{\textbf{c}}}) \\
    &= -A_M \Big[ \lambda(u_0^2-u_0)\text{\textbf{e}} + \Bar{\text{\textbf{c}}} + \lambda(2u_0-1) P_M^T\Bar{\text{\textbf{c}}} + 2\lambda \big( (\Tilde{\Omega}_M^TP_M^T\Bar{\text{\textbf{c}}}) \odot (P_M^T\Bar{\text{\textbf{c}}}) \big) \\
    &\hspace{4em}+ \lambda\Gamma^T_M (\Bar{\text{\normalfont{\textbf{a}}}}_M \odot \Bar{\text{\normalfont{\textbf{a}}}}_M) \Big] - \lambda A_M\Pi_M \Gamma^T_{\infty,M} (\Bar{\text{\normalfont{\textbf{a}}}}_\infty \odot \Bar{\text{\normalfont{\textbf{a}}}}_\infty),
\end{align*} 
with $\Bar{\text{\normalfont{\textbf{a}}}} = P^T\Bar{\text{\normalfont{\textbf{c}}}}$. Note that the term in the brackets can be computationally evaluated. Using the bound from Proposition~\ref{prop:estimates_finite_proj_finite_vectors} (iii) for the last term, we can define $Y_M$ as
\begin{equation}\label{eq:logistic_YM_expression}
    \begin{aligned}
        Y_M &:= \Big\| A_M \Big[ \lambda(u_0^2 - u_0)\text{\textbf{e}} + \Bar{\text{\textbf{c}}} + \lambda(2u_0 - 1)P_M^T\Bar{\text{\textbf{c}}} + 2\lambda \big( (\Tilde{\Omega}_M^TP_M^T\Bar{\text{\textbf{c}}}) \odot (P_M^T\Bar{\text{\textbf{c}}}) \big) + \lambda\Gamma^T_M (\Bar{\text{\normalfont{\textbf{a}}}}_M \odot \Bar{\text{\normalfont{\textbf{a}}}}_M) \Big] \Big\|_{\ell^2} \\
        &\hspace{2em}+ \frac{|\lambda| \|A_M\| \sqrt{2}}{\left(4-\sqrt{2}\right)2^{\frac{3J}{2}+3}} \|\Bar{\text{\textbf{c}}}\|_{\ell^2}^2
    \end{aligned}
\end{equation}

    % Yinf bound
\begin{itemize}
    \item $Y_\infty$: Observe that 
\end{itemize}
\begin{equation*}
    \Pi_\infty(T(\Bar{\text{\textbf{c}}}) - \Bar{\text{\textbf{c}}}) = \Pi_\infty \left( \lambda(2u_0-1)P^T\Bar{\text{\textbf{c}}} + \lambda\mathcal{H}(\Bar{\text{\textbf{a}}}^T\Omega(t)\Bar{\text{\textbf{a}}} + \Bar{\text{\textbf{a}}}^T\Omega^T(t)\Bar{\text{\textbf{a}}} + \Bar{\text{\textbf{a}}}^T\Theta(t)\Bar{\text{\textbf{a}}} \right).
\end{equation*}
With the estimates from Proposition~\ref{prop:estimates_infinite_proj_finite_vectors}, we can make $Y_\infty$ as
\begin{equation}\label{eq:logistic_Yinf_expression}
    Y_\infty := \frac{\left|\lambda(2u_0-1)\right|}{\sqrt{3}} \frac{\left\|\Bar{\text{\textbf{c}}}\right\|_{\ell^2}}{2^{J+2}} + \frac{|\lambda|}{\sqrt{3}} \frac{\left\|\Bar{\text{\textbf{c}}}\right\|_{\ell^2}^2}{2^{2J+3}} +  \frac{|\lambda|}{21\sqrt{7}}\frac{\left\|\Bar{\text{\textbf{c}}}\right\|_{\ell^2}^2}{2^{3J+6}}.
\end{equation}

% ZM bound
\begin{itemize}
    \item $Z_M$: Using the finite-infinite decomposition and the fact that $\Bar{\text{\textbf{c}}} \in \R^M$, 
\end{itemize}
\begin{align*}
    \Pi_M(DT(\Bar{\text{\textbf{c}}} + \text{\textbf{x}}))\text{\textbf{y}} &= \left( I_M - A_M B_1\right) \text{\textbf{y}}_M - A_M B_2 P_{\infty,M}^T \text{\textbf{y}}_\infty  - 2\lambda A_M\text{diag}(P_M^T\Bar{\text{\textbf{c}}})\big(\Tilde{\Omega}^TP^T\big)_{M,\infty}\text{\textbf{y}}_\infty \\
    &\hspace{3em}- 2\lambda A_M \Gamma_{\infty,M}^T\Pi_\infty \left( P^T \Bar{\text{\textbf{c}}} \odot P^T \text{\textbf{y}} \right) - 2\lambda A_M\Pi_M\mathcal{H}(\text{\textbf{x}}^T\text{\textbf{w}}(t)\text{\textbf{w}}^T(t)\text{\textbf{y}})
\end{align*}
where
\begin{align*}
    B_1 &:= I_M + \lambda(2u_0-1)P_M^T + 2\lambda \text{diag} \left( \Bar{\text{\textbf{c}}}^T P_M\Tilde{\Omega}_M \right)P_M^T \\
    &\hspace{5em}+ 2\lambda \text{diag}(P_M^T\Bar{\text{\textbf{c}}})\Tilde{\Omega}_M^TP_M^T + 2\lambda \Gamma_M^T \text{diag}\left(P_M^T\Bar{\text{\textbf{c}}} \right) P_M^T \\
    B_2 &:= \lambda(2u_0-1)I_M + 2\lambda \text{diag}\left( \Bar{\text{\textbf{c}}}^TP_M\Tilde{\Omega}_M \right) + 2\lambda \text{diag}(P_M^T\Bar{\text{\textbf{c}}}) +  2\lambda\Gamma_M^T\text{diag}\left(P_M^T\Bar{\text{\textbf{c}}}\right) 
\end{align*}
and $I_M$ is the $M \times M$ identity matrix. While the expression of $B_1$ and $B_2$ seem complicated, all terms are finite-dimensional and hence their norms can be calculated computationally. Thus, using the estimates from Proposition~\ref{prop:estimates_finite_proj_infinite_vectors} with the expression for $DT$ to bound the terms which cannot be easily estimated computationally, 
\begin{equation*}
    \left\| \Pi_M(DT(\Bar{\text{\textbf{c}}} + \text{\textbf{x}}))\text{\textbf{y}} \right\|_{\ell^2} \leq \left( C_1 + C_2 \left\|\text{\textbf{x}}\right\|_{\ell^2} \right) \left\|\text{\textbf{y}}\right\|_{\ell^2}
\end{equation*}
where
\begin{align*}
    C_1 &:= \left\|I_M - A_M B_1\right\| + \frac{\left\|A_MB_2\right\|}{2^{J+2}} 
    + \frac{(1+\sqrt{2})\left\|\lambda A_M\text{diag}(P_M^T\Bar{\text{\textbf{c}}})\right\|}{2^\frac{J+1}{2}} + \frac{\sqrt{2} \, \|\lambda A_M\|\,\|\Bar{\text{\textbf{c}}}\|_{\ell^2} }{(4-\sqrt{2})\,2^{\frac{3J}{2}+2}}\\
    C_2 &:= 2 |\lambda| 
    \left(2K_1 + K_2 \right)
\end{align*}
and $K_1$ and $K_2$ are as in Proposition~\ref{prop:estimates_finite_proj_infinite_vectors}. Hence, we can make $Z_M(r)$ as
\begin{equation}\label{eq:logistic_ZM_expression}
    Z_M(r) := C_1 + C_2r
\end{equation}

% Zinf bound
\begin{itemize}
    \item $Z_\infty$: We have that 
\end{itemize}
\begin{equation*}
    \Pi_\infty (DT(\Bar{\text{\textbf{c}}} + \text{\textbf{x}}))\text{\textbf{y}} =  \lambda(2u_0-1)\Pi_\infty P^T\text{\textbf{y}} + 2\lambda \Pi_\infty\mathcal{H}(\Bar{\text{\textbf{c}}}^T\text{\textbf{w}}(t)\text{\textbf{w}}^T(t)\text{\textbf{y}}) + 2\lambda \Pi_\infty\mathcal{H}(\text{\textbf{x}}^T\text{\textbf{w}}(t)\text{\textbf{w}}^T(t)\text{\textbf{y}})
\end{equation*}
All terms are infinite-dimensional and need to be analitically estimated. Using the estimates from Propositions \ref{prop:estimates_infinite_proj_infinite_vectors} and \ref{prop:PTc_etimates}, we can make $Z_\infty(r)$ as 
\begin{equation}\label{eq:logistic_Zinf_expression}
    Z_\infty(r) := \left( \frac{\left|\lambda(2u_0-1)\right|}{2^{J+1}\sqrt{3}} + \frac{|\lambda|D_1\left\|\Bar{\text{\textbf{c}}}\right\|_{\ell^2}}{2^{\frac{3J}{2}}} \right) + \frac{|\lambda|D_2}{2^{\frac{3J}{2}+2}}\,r
\end{equation}
with $D_1$ and $D_2$ as in Proposition~\ref{prop:estimates_infinite_proj_infinite_vectors}.

\subsubsection{Results}

Figure~\ref{fig:logistic_solutions} shows the numerical solutions for $J=6$ and $J=10$ compared to the true solution, using $\lambda = 6$ and $u_0 = 0.2$ for both cases. Visually, the numerical solutions agrees with the true one.

\begin{figure}[h]
    \centering
    \begin{subfigure}{0.4\linewidth}
        \includegraphics[width=\linewidth]{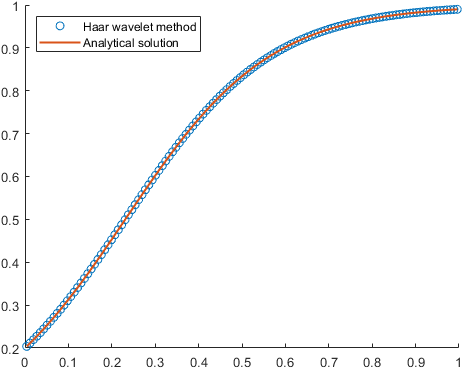}
        \caption{$J=6$}
        \label{fig:logistic_}
    \end{subfigure}
    \hfill
    \begin{subfigure}{0.4\linewidth}
        \includegraphics[width=\linewidth]{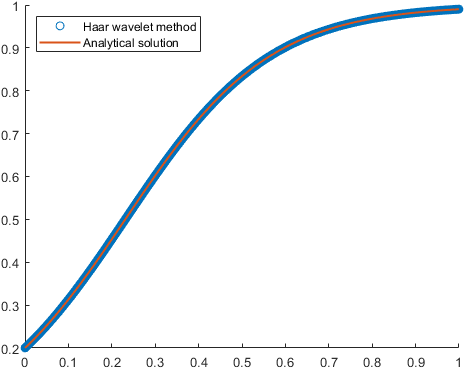}
        \caption{$J=10$}
        \label{fig:logistic_J10}
    \end{subfigure}
    \caption{Numerical and true solutions for the logistic equation.}
    \label{fig:logistic_solutions}
\end{figure}

Figure~\ref{fig:R0_vs_omega} shows the radius $r_0$ obtained as $J$ increases for different $\omega$. It is clear that, as $J$ increases, the radius $r_0$ decreases; this is due to more terms being calculated more accurately, instead of only being bounded by analytical estimates. Also, smaller values of $\omega$ yield tighter radii; however, if $\omega$ is too small the method will not work, as there will not be a true solution within $\overline{B_\omega(\Bar{\text{\textbf{c}}},r)}$. Figure~\ref{fig:R0_minimum_omega} shows the radii obtained with $\omega$ optimized up to two significant digits.

\begin{figure}[h!]
    \centering
    \includegraphics[width=0.6\linewidth]{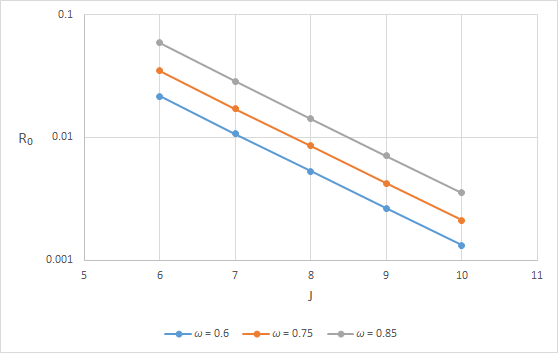}
    \begingroup
        \scriptsize
        \begin{tabular}{|c|c|c|c|}
            \hline
            $J$ & $r_0$ ($\omega = 0.6$) & $r_0$ ($\omega = 0.75$) & $r_0$ ($\omega = 0.85$) \\
            \hline
            6 & $2.1677704 \times 10^{-2}$ & $3.4976922 \times 10^{-2}$ & $5.9222878 \times 10^{-2}$ \\
            7 & $1.0690405 \times 10^{-2}$ & $1.7163569 \times 10^{-2}$ & $2.8785441 \times 10^{-2}$ \\
            8 & $5.3120948 \times 10^{-3}$ & $8.5120420 \times 10^{-3}$ & $1.4224769 \times 10^{-2}$ \\
            9 & $2.6483940 \times 10^{-3}$ & $4.2402808 \times 10^{-3}$ & $7.0756199 \times 10^{-3}$ \\
            10 & $1.3223867 \times 10^{-3}$ & $2.1164777 \times 10^{-3}$ & $3.5294192 \times 10^{-3}$ \\
            \hline
        \end{tabular}
    \endgroup
    \caption{Radius $r_0$ obtained for different values of $\omega$}
    \label{fig:R0_vs_omega}
\end{figure}

Lastly, Figure~\ref{fig:R0_vs_time} shows the radii and computation time. As $J$ increases, the computation time is expected to increase; however, Figure~\ref{fig:R0_vs_time} shows that after a certain point it increases more rapidly than $r_0$ decreases. This is expected as the size of the matrices quadruples for every increase of $J$, and so one must carefully balance the needed precision with computing time.

\begin{figure}[h!]
    \centering
    \begin{subfigure}{0.48\linewidth}
        \includegraphics[width=\linewidth]{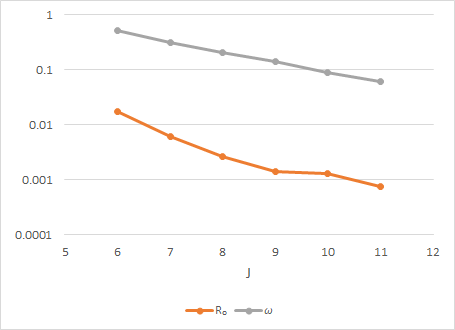}
        \caption{}
        \label{fig:R0_minimum_omega}
    \end{subfigure}
    \begin{subfigure}{0.50\linewidth}
        \includegraphics[width=\linewidth]{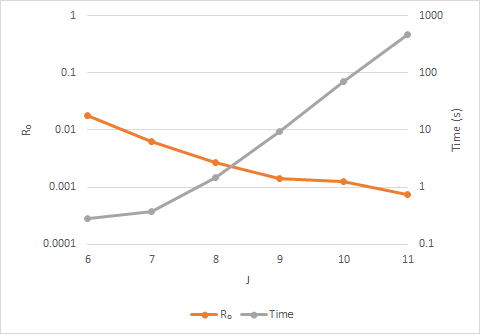}
        \caption{}
        \label{fig:R0_vs_time}
    \end{subfigure}
    \begingroup
        \scriptsize
        \begin{tabular}{|c|c|c|c|}
            \hline
            $J$ & $\omega$ & $r_0$ & Time \\
            \hline
            6 & 0.51 & $1.7651807 \times 10^{-2}$ & $0.283$ \\
            7 & 0.31 & $6.1826325 \times 10^{-3}$ & $0.377$ \\
            8 & 0.21 & $2.6863885 \times 10^{-3}$ & $1.499$ \\
            9 & 0.14 & $1.4138969 \times 10^{-3}$ & $9.447$ \\
            10 & 0.089 & $1.2789917 \times 10^{-3}$ & $70.958$ \\
            11 & 0.060 & $7.4420035 \times 10^{-4}$ & $474.220$ \\
            \hline
        \end{tabular}
    \endgroup
    \caption{(a) Radius $r_0$ obtained with more optimized $\omega$ for each resolution level $J$. (b) Comparison between $r_0$ and time elapsed.}
    \label{fig:logistic_optimal_radius}
\end{figure}

% \begin{table}[]
%     \centering
%         
%     \caption{Data for Figure}
%     \label{tab:my_label}
% \end{table}

% %%%%%%%%%%%%%%%%%%%%%%%%%%%%%%%%%%%%%%%%%%%%%%%%%%%%%%%%%%%%%%%%%%%%%%%%
% Forced logistic equation
% %%%%%%%%%%%%%%%%%%%%%%%%%%%%%%%%%%%%%%%%%%%%%%%%%%%%%%%%%%%%%%%%%%%%%%%%
\subsection{Logistic equation with a discontinuous forcing term}
\label{subsect:forced}

The next example is again the logistic equation, but with a discontinuous forcing term
\begin{equation}\label{eq:forced_logistic_equation}
    \begin{aligned}
        & \dot{u} = \lambda u(1 - u) + g(t)\\
        & u(0) = u_0
    \end{aligned}
    \quad,\quad g(t) = 
    \begin{cases}
        1 &, \text{if } t \leq \frac{1}{2} \\
        0 &, \text{if } t > \frac{1}{2} \\
    \end{cases}
\end{equation} 
While this is a Ricatti equation which can be explicitly solved, we can see from the equation itself that the solution should not be smooth, since $g$ is discontinuous. Nonetheless, we can find a verification radius in the $L^2$ sense.

The functional equation for this case is similar to \eqref{eq:logistic_equation}; making $\text{\textbf{g}} = \mathcal{H}(g)$,
\begin{equation}\label{eq:forced_inf_matrix_form}
    \text{\textbf{c}}^T\text{\textbf{h}}(t) - \lambda\left(\text{\textbf{c}}^T\text{\textbf{w}}(t) + u_0 - u_0^2 - 2u_0 \text{\textbf{c}}^T\text{\textbf{w}}(t) - (W(t))(\text{\textbf{c}},\text{\textbf{c}})\right) - \text{\textbf{g}}^T\text{\textbf{h}}(t) = 0.
\end{equation}

\subsubsection{Numerical approximation}

Applying the same method as in the previous example, we get the equation to obtain the numerical, finite-dimensional approximation $\Bar{\text{\textbf{c}}}$, which is similar to before:
\begin{equation}\label{eq:forced_numerical_equation}
    \text{\textbf{c}}_M + \lambda(2u_0-1)P_M^T\text{\textbf{c}}_M + \frac{\lambda(u_0^2-u_0)}{M}H_M\text{\textbf{e}} + \frac{\lambda}{M}H_M\text{\textbf{W}}_M(\text{\textbf{c}}_M,\text{\textbf{c}}_M) - \frac{1}{M}H_M\text{\textbf{g}}_M = 0 
\end{equation}
where $\text{\textbf{g}}_M := (g(t_1)\,,\,...\,,\,g(t_M))^T$. Again, since this is a nonlinear equation, we will use Newton's method.

\subsubsection{Estimates for the radii polynomials}

In order to apply the radii polynomial method, we use the following functional equation $F:\ell^2(\R) \to \ell^2(\R)$:
\begin{equation*}
    F(\text{\textbf{x}}) := \lambda(u_0^2 - u_0)e_1 + \text{\textbf{x}} + \lambda(2u_0 - 1) P^T\text{\textbf{x}} + \lambda \mathcal{H}\left(\text{\textbf{x}}^T\text{\textbf{w}}(t)\text{\textbf{w}}^T(t)\text{\textbf{x}}\right) - \mathcal{H}(g).
\end{equation*}
Observe that $g = \frac{1}{2}(\psi_1 + \psi_2)$, and thus $\mathcal{H}(g) = \left(\frac{1}{2}, \frac{1}{2}, 0,0,...\right)^T =: \text{\textbf{g}}$; in particular, $\text{\textbf{g}} \in \R^M$. Hence, the map used is given by
\begin{equation}\label{eq:forced_F_expression}
    F(\text{\textbf{x}}) = \lambda(u_0^2 - u_0)e_1 + \text{\textbf{x}} + \lambda(2u_0 - 1) P^T\text{\textbf{x}} + \lambda \mathcal{H}\left(\text{\textbf{x}}^T\text{\textbf{w}}(t)\text{\textbf{w}}^T(t)\text{\textbf{x}}\right) - \text{\textbf{g}}.
\end{equation}

The maps $T$ and $DT(\text{\textbf{x}})$ for the radii polynomials are the same as \eqref{eq:T_fixed_point} and \eqref{eq:DT_fixed_point}, respectively. Actually, since $\text{\textbf{g}}$ does not depend on $\text{\textbf{x}}$, the derivative $DF$ is the same as in the non-forced logistic equation from \eqref{eq:logistic_DF_expression}.

Using the same methods as before, we have the following bounds for the radii polynomial method:
\begin{align}
    Y_M &:= C_0 + \frac{|\lambda|\|A_M\|\sqrt{2}}
        {\left(4-\sqrt{2}\right)2^{\frac{3J}{2}+3}} \|\Bar{\text{\textbf{c}}}\|_{\ell^2}^2 \label{eq:forced_YM_expression} \\
    Y_\infty &:= \frac{\left|\lambda(2u_0-1)\right|}{\sqrt{3}}
        \frac{\left\|\Bar{\text{\textbf{c}}}\right\|_{\ell^2}}{2^{J+2}} + \frac{|\lambda|}{\sqrt{3}} \frac{\left\|\Bar{\text{\textbf{c}}}\right\|_{\ell^2}^2}{2^{2J+3}} +  \frac{|\lambda|}{21\sqrt{7}}\frac{\left\|\Bar{\text{\textbf{c}}}\right\|_{\ell^2}^2}{2^{3J+6}} \label{eq:forced_Yinf_expression} \\
    Z_M(r) &:= C_1 + C_2r \label{eq:forced_ZM_expression} \\
    Z_\infty(r) &:= \left( \frac{\left|\lambda(2u_0-1)\right|}{2^{J+1}\sqrt{3}} +
        \frac{|\lambda|D_1\left\|\Bar{\text{\textbf{c}}}\right\|_{\ell^2}}{2^{\frac{3J}{2}}} \right) + \frac{|\lambda|D_2}{2^{\frac{3J}{2}+2}}\,r \label{eq:forced_Zinf_expression}
\end{align}
where $C_1$, $C_2$, $D_1$ and $D_2$ are as in the estimates for the non-forced logistic equation, and
\begin{align*}
    C_0 &= \Big\| A_M \Big[ \lambda(u_0^2 - u_0)\text{\textbf{e}} + \Bar{\text{\textbf{c}}} + \lambda(2u_0 - 1)P_M^T\Bar{\text{\textbf{c}}} \\
    &\hspace{5em}+ 2\lambda \Big( (\Tilde{\Omega}_M^TP_M^T\Bar{\text{\textbf{c}}}) \odot (P_M^T\Bar{\text{\textbf{c}}}) \Big) + \lambda\Gamma^T_M (\Bar{\text{\normalfont{\textbf{a}}}}_M \odot \Bar{\text{\normalfont{\textbf{a}}}}_M) - \text{\textbf{g}} \Big] \Big\|_{\ell^2}.
\end{align*}

\subsubsection{Results}

Figure \ref{fig:forced_solutions} shows the results using the Haar wavelet method compared to numerical integration, using $\lambda = 6$ and $u_0 = 0.2$. For the numerical integration, we used the same amount of points as the Haar wavelet method, that is, $2^{J+1}$ points. It can be seen that the numerical integration tends to smooth the graph at 
$t=0.5$, while our method preserves the original shape. 

\begin{figure}[h]
    \centering
    \begin{subfigure}{0.4\linewidth}
        \includegraphics[width=\linewidth]{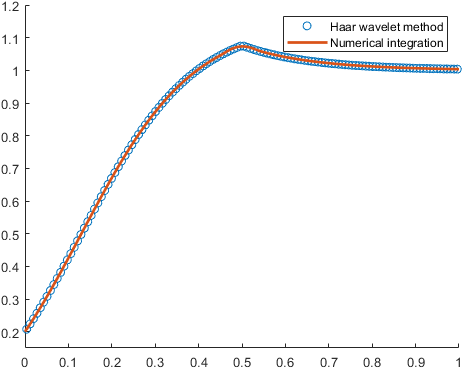}
        \caption{$J=6$}
        \label{fig:forced_solution_J6}
    \end{subfigure}
    \hfill
    \begin{subfigure}{0.4\linewidth}
        \includegraphics[width=\linewidth]{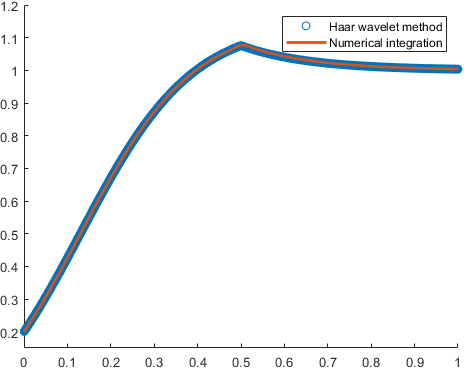}
        \caption{$J=10$}
        \label{fig:forced_solution_J10}
    \end{subfigure}
    \caption{Numerical and true solutions for the logistic equation with forcing term}
    \label{fig:forced_solutions}
\end{figure}

Figure \ref{fig:forced_R0} shows the verification radius as $J$ increases. It is worth noting that the solution is not smooth; nonetheless, our method returned verification radii similar to the non-forced logistic equation. 

\begin{figure}[h!]
    \centering
    \hfill
    \begin{minipage}[c]{0.4\linewidth}
        \vspace{0pt}
        \includegraphics[width=\linewidth]{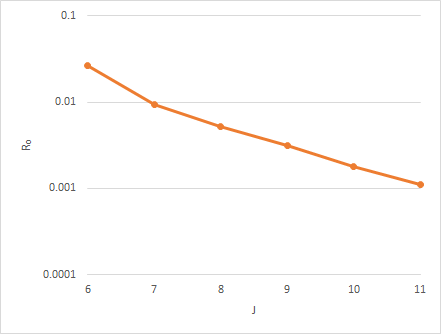}
    \end{minipage}
    \hfill
    \begin{minipage}[c]{0.4\linewidth}
        \scriptsize
        \vspace{0pt}
        \begin{tabular}{|c|c|c|}
            \hline
            $J$ & $\omega$ & $r_0$ \\
            \hline
            6 & 0.53  & $2.6161420 \times 10^{-2}$ \\
            7 & 0.31  & $9.2508029 \times 10^{-3}$ \\
            8 & 0.20  & $5.1495598 \times 10^{-3}$ \\
            9 & 0.13  & $3.1382945 \times 10^{-3}$ \\
            10 & 0.086 & $1.7710909 \times 10^{-3}$ \\
            11 & 0.057 & $1.1107730 \times 10^{-3}$ \\
            \hline
        \end{tabular}
    \end{minipage}
    \caption{Verification radius for the forced logistic equation}
    \label{fig:forced_R0}
\end{figure}

\subsection{Lorenz system}\label{subsect:lorenz}

The Lorenz system is given by
\begin{equation}\label{eq:lorenz_system}
    \begin{aligned}
        \dot{x} &= \sigma(y-x) \\
        \dot{y} &= x(\rho-z)-y \\
        \dot{z} &= xy-\beta z
    \end{aligned}
\end{equation}
where $\sigma$, $\rho$ and $\beta$ are positive parameters, usually taken as $\sigma = 10$, $\beta = \frac{8}{3}$ and $\rho = 28$. This is a well-studied system, and with these parameters the system exhibits chaotic behavior with a strange attractor.

Since this is a system of equations, we must first some of our definitions in order to apply our method. First, we define the spaces $X_s := \ell^2(\R) \times \ell^2(\R) \times \ell^2(\R)$ and $X_f := L^2([0,1]) \times L^2([0,1]) \times L^2([0,1])$, and endow them with the norms
\begin{equation}\label{eq:norm_system}
    \begin{aligned}
        \|(\text{\textbf{c}}_1,\text{\textbf{c}}_2,\text{\textbf{c}}_3)\|_{X_s} &:= \max \left\{ \|\text{\textbf{c}}_1\|_{\ell^2}, \|\text{\textbf{c}}_2\|_{\ell^2}, \|\text{\textbf{c}}_3\|_{\ell^2} \right\} \\
        \|(f_1,f_2,f_3)\|_{X_f} &:= \max \left\{ \|f_1\|_{L^2}, \|f_2\|_{L^2}, \|f_3\|_{L^2} \right\}
    \end{aligned}    
\end{equation}
With these norms, $X_s$ and $X_f$ are still Banach spaces; though they are no longer Hilbert spaces, the methods of Section \ref{sect:radii_poly} are still applicable. Also, an operator $A: X_s \to X_s$ can be expressed using block matrix notation as
\begin{equation*}
    A = 
    \begin{bmatrix}
        A_{1,1} & A_{1,2} & A_{1,3} \\
        A_{2,1} & A_{2,2} & A_{2,3} \\
        A_{3,1} & A_{3,2} & A_{3,3} 
    \end{bmatrix}
\end{equation*}
where $A_{i,j}: \ell^2(\R) \to \ell^2(\R)$ for $i,j = 1,2,3$. If those are bounded, $A$ is bounded and
\begin{equation}\label{eq:norm_operator}
    \|A\|_{B(X_s)} = \max_{1 \leq i \leq 3} \sum_{j=1}^3 \|A_{i,j}\|_{B(\ell^2)}.
\end{equation}
Similar notation will be used when $A_{i,j} \in B(\R^M)$. Lastly, we make a small abuse of notation and extend the notation for the operators in $\ell^2(\R)$ such as the projections $\Pi_M$ and $\Pi_\infty$ to $X_s$ by applying them element-wise:
\begin{align*}
    \Pi_M(\text{\textbf{c}}_1, \text{\textbf{c}}_2, \text{\textbf{c}}_3) &:= (\Pi_M\text{\textbf{c}}_1, \Pi_M\text{\textbf{c}}_2, \Pi_M\text{\textbf{c}}_3) \\
    \Pi_\infty(\text{\textbf{c}}_1, \text{\textbf{c}}_2, \text{\textbf{c}}_3) &:= (\Pi_\infty\text{\textbf{c}}_1, \Pi_\infty\text{\textbf{c}}_2, \Pi_\infty\text{\textbf{c}}_3).
\end{align*}

Applying the same methods used to obtain \eqref{eq:logistic_inf_matrix_form} to each equation in \eqref{eq:lorenz_system}, we obtain the functional equation
\begin{equation}\label{eq:lorenz_inf_matrix_form}
    \begin{aligned}
        & \text{\textbf{c}}_x^T\text{\textbf{h}}(t) - \sigma\text{\textbf{c}}_y^T\text{\textbf{w}}(t) + \sigma\text{\textbf{c}}_x^T\text{\textbf{w}}(t) -\sigma(y_0-x_0) = 0 \\
        & \text{\textbf{c}}_y^T\text{\textbf{h}}(t) + x_0\text{\textbf{c}}_z^T\text{\textbf{w}}(t) - (\rho-z_0)\text{\textbf{c}}_x^T\text{\textbf{w}}(t) + \text{\textbf{c}}_y^T\text{\textbf{w}}(t) - (x_0(\rho-z_0)-y_0) + W(t)(\text{\textbf{c}}_x,\text{\textbf{c}}_z) = 0 \\
        & \text{\textbf{c}}_z^T\text{\textbf{h}}(t) - y_0\text{\textbf{c}}_x^T\text{\textbf{w}}(t) - x_0\text{\textbf{c}}_y^T\text{\textbf{w}}(t) + \beta \text{\textbf{c}}_z^T\text{\textbf{w}}(t) - (x_0y_0-\beta z_0) - W(t)(\text{\textbf{c}}_x,\text{\textbf{c}}_y) = 0
    \end{aligned}
\end{equation}
where $\text{\textbf{c}}_x = \mathcal{H}(\dot{x})$, $\text{\textbf{c}}_y = \mathcal{H}(\dot{y})$ and $\text{\textbf{c}}_z = \mathcal{H}(\dot{z})$.

\subsubsection{Numerical approximation}

Using the same techniques used to obtain \eqref{eq:logistic_numerical_equation} and \eqref{eq:forced_numerical_equation} to each equation in \eqref{eq:lorenz_inf_matrix_form}, we obtain the system to be solved numerically with Newton's method:
\begin{equation}\label{eq:lorenz_numerical_equation}
    \begin{aligned}
        & \Bar{\text{\textbf{c}}}_x - \frac{\sigma}{M} P_M^T\Bar{\text{\textbf{c}}}_y + \frac{\sigma}{M} P_M^T\Bar{\text{\textbf{c}}}_x -\frac{\sigma}{M} (y_0-x_0)H_M\text{\textbf{e}} = 0 \\
        & \Bar{\text{\textbf{c}}}_y + \frac{1}{M}H_M \text{\textbf{W}}_M(\Bar{\text{\textbf{c}}}_x,\Bar{\text{\textbf{c}}}_z) + x_0 P_M^T\Bar{\text{\textbf{c}}}_z - (\rho-z_0) P_M^T\Bar{\text{\textbf{c}}}_x + P_M^T\Bar{\text{\textbf{c}}}_y - \frac{1}{M}(x_0(\rho-z_0)-y_0)H_M\text{\textbf{e}} = 0 \\
        & \Bar{\text{\textbf{c}}}_z - \frac{1}{M}H_M \text{\textbf{W}}_M(\Bar{\text{\textbf{c}}}_x,\Bar{\text{\textbf{c}}}_y) - y_0 P_M^T\Bar{\text{\textbf{c}}}_x - x_0 P_M^T\Bar{\text{\textbf{c}}}_y + \beta P_M^T\Bar{\text{\textbf{c}}}_z - \frac{1}{M}(x_0y_0-\beta z_0)H_M\text{\textbf{e}} = 0
    \end{aligned}
\end{equation}

\begin{remark}
    One interesting remark in \cite{2009lepik} is that one can use results from lower resolutions as initial guesses for the Newton's method for higher resolution levels, instead of using a high resolution level right from the start. For the Lorenz system, this can reduce convergence problems and overall calculation time.
\end{remark}

\subsubsection{Estimates for the radii polynomials}

For the Lorenz system, the maps $T$ and $DT$ are as in \eqref{eq:T_fixed_point} and \eqref{eq:DT_fixed_point} respectively, with $\text{\textbf{c}} = (\text{\textbf{c}}_x, \text{\textbf{c}}_y, \text{\textbf{c}}_z) \in X_s$. The map $F:X_s \to X_s$ is given by
\begin{equation*}
    F(\text{\textbf{c}}) := \left(
    \begin{array}{l}
        \text{\textbf{c}}_x - \sigma P^T\text{\textbf{c}}_y + \sigma P^T\text{\textbf{c}}_x - \sigma(y_0-x_0)\text{\textbf{e}}_1 \\[5pt]
        \text{\textbf{c}}_y + \mathcal{H}\left( \text{\textbf{c}}_x^T \text{\textbf{w}}(t) \text{\textbf{w}}^T(t) \text{\textbf{c}}_z \right) + x_0 P^T\text{\textbf{c}}_z  - (\rho-z_0) P^T\text{\textbf{c}}_x + P^T\text{\textbf{c}}_y - (x_0(\rho-z_0)-y_0)\text{\textbf{e}}_1 \\[5pt]
        \text{\textbf{c}}_z - \mathcal{H}\left( \text{\textbf{c}}_x^T \text{\textbf{w}}(t) \text{\textbf{w}}^T(t) \text{\textbf{c}}_y \right) - y_0 P^T\text{\textbf{c}}_x - x_0 P^T\text{\textbf{c}}_y + \beta P^T\text{\textbf{c}}_z - (x_0y_0-\beta z_0)\text{\textbf{e}}_1
    \end{array}
    \right)
\end{equation*}
and for $\text{\textbf{v}} = (\text{\textbf{v}}_x, \text{\textbf{v}}_y, \text{\textbf{v}}_z) \in X_s$, 
\begin{equation*}
    DF(\text{\textbf{c}})\text{\textbf{v}} := \left(
    \begin{array}{l}
        \text{\textbf{v}}_x - \sigma P^T\text{\textbf{v}}_y + \sigma P^T\text{\textbf{v}}_x \\[5pt]
        \text{\textbf{v}}_y + \mathcal{H}\left( \text{\textbf{c}}_x^T \text{\textbf{w}}(t) \text{\textbf{w}}^T(t) \text{\textbf{v}}_z \right) + \mathcal{H}\left( \text{\textbf{c}}_z^T \text{\textbf{w}}(t) \text{\textbf{w}}^T(t) \text{\textbf{v}}_x \right) \\[2pt]
        \hspace{7em}+ x_0 P^T\text{\textbf{v}}_z - (\rho-z_0) P^T\text{\textbf{v}}_x + P^T\text{\textbf{v}}_y \\[5pt]
        \text{\textbf{c}}_z - \mathcal{H}\left( \text{\textbf{c}}_x^T \text{\textbf{w}}(t) \text{\textbf{w}}^T(t) \text{\textbf{v}}_y \right) - \mathcal{H}\left( \text{\textbf{c}}_y^T \text{\textbf{w}}(t) \text{\textbf{w}}^T(t) \text{\textbf{v}}_x \right) \\[2pt]
        \hspace{7em}- y_0 P^T\text{\textbf{v}}_x - x_0 P^T\text{\textbf{v}}_y + \beta P^T\text{\textbf{v}}_z
    \end{array}
    \right).
\end{equation*}

For the Lorenz system, we have the following estimates for the radii polynomial method. Using the norms defined in \eqref{eq:norm_system} and \eqref{eq:norm_operator} and applying the same techniques and techniques as before to each equation, we find the following bounds:
\begin{equation}\label{eq:lorenz_estimates}
    \begin{aligned}
        Y_M &= \|A_MF_M(\Bar{\text{\textbf{c}}})\|_{X_s} + \frac{\|A_M\|_{B(X_s)} \|\Bar{\text{\textbf{c}}}_x\|_{\ell^2}}{(4-\sqrt{2})2^{\frac{3J+5}{2}}} \max\big\{ \| \Bar{\text{\textbf{c}}}_y\|_{\ell^2} ,  \|\Bar{\text{\textbf{c}}}_z\|_{\ell^2} \big\} \\
        Y_\infty &= \max\{Y_\infty^1, Y_\infty^2, Y_\infty^3\} \\
        Z_M(r) &= \alpha_1 + \alpha_2r \\
        Z_\infty(r) &= \gamma_1 + \gamma_2r
    \end{aligned}
\end{equation}
where the constants above are given by
\begin{align*}
    Y_\infty^1 &:=
        \frac{|\sigma|}{\sqrt{3}} \frac{\|\Bar{\text{\textbf{c}}}_x 
        - \Bar{\text{\textbf{c}}}_y\|_{\ell^2}}{2^{J+2}} \\
    Y_\infty^2 &:=
        \frac{\|\Bar{\text{\textbf{c}}}_x\|_{\ell^2} \|\Bar{\text{\textbf{c}}}_z\|_{\ell^2}}{\sqrt{3}\ 2^{2J+5}} 
        + \frac{\|\Bar{\text{\textbf{c}}}_x\|_{\ell^2} \|\Bar{\text{\textbf{c}}}_z\|_{\ell^2}}{(21\sqrt{7})\,2^{3J+6}} 
        + \frac{\|x_0\Bar{\text{\textbf{c}}}_z - (\rho-z_0)\Bar{\text{\textbf{c}}}_x + \Bar{\text{\textbf{c}}}_y\|_{\ell^2}}{\sqrt{3}\ 2^{J+2}} \\
    Y_\infty^3 &:=
        \frac{\|\Bar{\text{\textbf{c}}}_x\|_{\ell^2} \|\Bar{\text{\textbf{c}}}_y\|_{\ell^2}}{2^{2J+5}\ \sqrt{3}} 
        + \frac{\|\Bar{\text{\textbf{c}}}_x\|_{\ell^2} \|\Bar{\text{\textbf{c}}}_y\|_{\ell^2}}{(21\sqrt{7})\,2^{3J+6}} 
        + \frac{\|\beta\Bar{\text{\textbf{c}}}_z - y_0\Bar{\text{\textbf{c}}}_x - x_0\Bar{\text{\textbf{c}}}_y\|_{\ell^2}}{\sqrt{3}\ 2^{J+2}}
\end{align*}
\begin{align*}
    \alpha_1 &:= \left\|I_M-A_MB_1\right\|_{B(X_s)} 
        + \frac{\left\|A_MB_2\right\|_{B(X_s)}}{2^{J+2}}  \\
        & \hspace{4em} + \frac{1+\sqrt{2}}{2^\frac{J+3}{2}}\left\|A_MB_2\right\|_{B(X_s)} + \frac{\sqrt{2}}{4-\sqrt{2}} \left\|A_M^\dagger
        \begin{pmatrix}
            0 \\
            \|\Bar{\text{\textbf{c}}}_x\|_{\ell^2} + \|\Bar{\text{\textbf{c}}}_z\|_{\ell^2} \\
            \|\Bar{\text{\textbf{c}}}_x\|_{\ell^2} + \|\Bar{\text{\textbf{c}}}_y\|_{\ell^2}
        \end{pmatrix}
        \right\|_{X_s} \\
    \alpha_2 &:= \|4C+2D\|_{B(X_s)}
\end{align*}
\begin{align*}
    \gamma_1 &:= \max \bigg\{ \frac{|\sigma|}{\sqrt{3}\ 2^{J+1}} \ , \ 
        \frac{D_1 \big(\|\Bar{\text{\textbf{c}}}_z\|_{\ell^2} + \|\Bar{\text{\textbf{c}}}_x\|_{\ell^2}\big)}{2^\frac{3J}{2}} + \frac{1 + |\rho-z_0| + |x_0|}{\sqrt{3}\ 2^{J+2}} \ , \\ 
        &\hspace{8em} \frac{D_1 \big(\|\Bar{\text{\textbf{c}}}_z\|_{\ell^2} + \|\Bar{\text{\textbf{c}}}_x\|_{\ell^2}\big)}{2^\frac{3J}{2}} + \frac{\beta + |x_0| + |y_0|}{\sqrt{3}\ 2^{J+2}} \bigg\} \\
    \gamma_2 &:= \frac{D_2}{2^{\frac{3J}{2}+1}}.
\end{align*}
and the auxiliary quantities to calculate the constants are given by
\begin{equation*}
    A_M^\dagger :=
        \begin{bmatrix}
            \|A_{M_{x,x}}\|_{B(\ell^2)} & \|A_{M_{x,y}}\|_{B(\ell^2)} & \|A_{M_{x,z}}\|_{B(\ell^2)} \\
            \|A_{M_{y,x}}\|_{B(\ell^2)} & \|A_{M_{y,y}}\|_{B(\ell^2)} & \|A_{M_{y,z}}\|_{B(\ell^2)} \\
            \|A_{M_{z,x}}\|_{B(\ell^2)} & \|A_{M_{z,y}}\|_{B(\ell^2)} & \|A_{M_{z,z}}\|_{B(\ell^2)}
        \end{bmatrix}
\end{equation*}
\begin{equation*}
    \begingroup
        \setlength\arraycolsep{4pt}
        B_1 := 
        \begin{bmatrix}
            I_M+\sigma P_M^T & -\sigma P_M^T & 0 \\
            (B_M(\Bar{\text{\textbf{c}}}_z) - (\rho-z_0)I_M)P_M^T & I_M+P_M^T & (B_M(\Bar{\text{\textbf{c}}}_x)+x_0I_M)P_M^T \\
            (-B_M(\Bar{\text{\textbf{c}}}_y) - y_0I_M)P_M^T & -(B_M(\Bar{\text{\textbf{c}}}_x) + x_0I_M)P_M^T & I_M+P_M^T 
        \end{bmatrix}
    \endgroup 
\end{equation*}
\begin{equation*}
    B_M(\Bar{\text{\textbf{c}}}_i) := \text{diag}(\Tilde{\Omega}_M^T P_M^T\Bar{\text{\textbf{c}}}_i) + \Gamma_M^T\text{diag}(P_M^T\Bar{\text{\textbf{c}}}_i)
\end{equation*}
\begin{equation*}
    B_2 := 
    \begin{bmatrix}
        \sigma I_M & -\sigma I_M & 0 \\
        B_M(\Bar{\text{\textbf{c}}}_z) & I_M & B_M(\Bar{\text{\textbf{c}}}_x) \\
        -B_M(\Bar{\text{\textbf{c}}}_y) & -B_M(\Bar{\text{\textbf{c}}}_x) & I_M
    \end{bmatrix} 
\end{equation*}
\begin{equation*}
    C = 
    \begin{bmatrix}
      0 & C_{x,y} & C_{x,z} \\
      0 & C_{y,y} & C_{y,z} \\
      0 & C_{z,y} & C_{z,z}
    \end{bmatrix} \quad,\quad 
    D = 
    \begin{bmatrix}
      0 & D_{x,y} & D_{x,z} \\
      0 & D_{y,y} & D_{y,z} \\
      0 & D_{z,y} & D_{z,z}
    \end{bmatrix}
\end{equation*}
and for $i,j\in\{x,y,z\}$
\begin{align*}
    C_{i,j} &:=  \| A_{M_{i,j}}\, \text{diag}\left(\|(P_M^T)_{i,*}\|_{\ell^2}\right) \Tilde{\Omega}_M^TP_M^T \| 
        + \frac{\left\|A_{M_{i,j}} \Tilde{\Omega}_M^T P_M^T\right\|}{2^{J+2}} \\
    & \hspace{8em} + \frac{(1+\sqrt{2})\left\|A_{M_{i,j}}P_M^T\right\|}{2^\frac{J+3}{2}}
        + \frac{(1+\sqrt{2})\left\|A_{M_{i,j}}\right\|}{2^\frac{3J+7}{2}} \\
    D_{i,j} &:= \left\| A_{M_{i,j}} \Gamma_{M_{i,j}}^T \text{diag}(\|(P_M^T)_{i,*}\|_\ell^2) P_M^T \right\| 
        + \frac{\left\| A_{M_{i,j}} \Gamma_{M_{i,j}}^T P_M^T \right\|}{2^{J+1}} \\
    & \hspace{8em} + \frac{\left\| A_{M_{i,j}}\Gamma_{M_{i,j}}^T \right\|}{2^{2J+4}}
        + \frac{\sqrt{2}}{\left(4-\sqrt{2}\right)}\, \frac{\|A_{M_{i,j}}\|}{2^{\frac{3J}{2}+4}}
\end{align*}
and $D_1$ and $D_2$ are as in Proposition~\ref{prop:estimates_infinite_proj_infinite_vectors}.

\subsubsection{Results}

Figure \ref{fig:lorenz_plot} shows the approximation obtained with the Haar wavelet method using $J = 10$ and numerical integration (fewer points from the Haar wavelet method are displayed for clarity); for the latter, we used $M^2 = 2^{2J+2}$ points in order to obtain a precise result. Visually, there seems to be good agreement between both results.
\begin{figure}[h]
    \centering
    \includegraphics[width=0.7\linewidth]{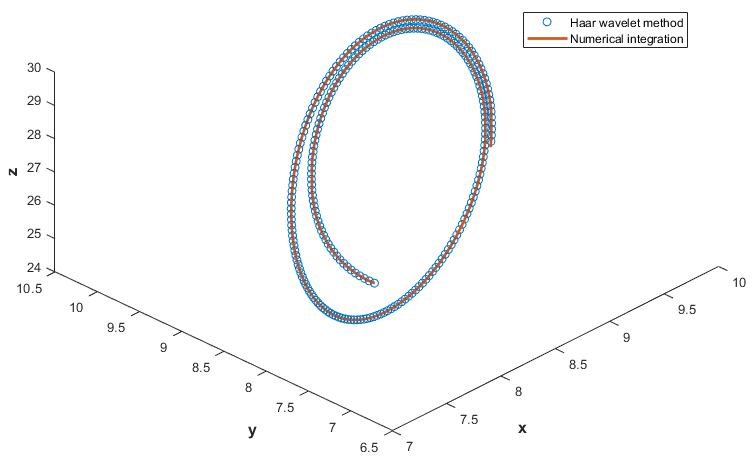}
    \caption{Results for the Lorenz system}
    \label{fig:lorenz_plot}
\end{figure}

For the same resolution level, the numerical result was rigorously verified by the radii polynomial method using the estimates described above, obtaining an $r_0 = 3.9868504 \times 10^{-2}$ for $\omega = 0.45$ in which the true solution lies in $X_s$.
\section{Conclusions and future work}
\label{sect:conclusion}

We developed a radii polynomial method using the Haar wavelet approach for differential equations, and illustrated the method by applying it to three differential equations. One advantage of our method over previous methods based on the radii polynomials approach is that, due to the use of the Haar wavelets, our method does not require the solutions to be smooth.

In the future we plan to develop the estimates for higher-order derivatives. While a higher-order differential equation can be transformed into a system of first-order equations, this increases the size of the matrices. Thus it might be interesting to use operators that directly represent higher-order derivatives. While some of those have already been used for usual numerical methods, we need to compute the estimates needed for our radii polynomial method.

Furthermore, we only presented the estimates needed for the radii polynomials for quadratic nonlinearities, since the main goal of this paper is to present the general method and illustrate how to compute the estimates and apply the method. In the future we plan to extend these estimates to include higher-order polynomial nonlinearities, as this would greatly expand the applicability of our method.

Lastly, we believe this method can be a basis to build other methods using similar techniques, such as continuation methods.

\subsection*{Acknowledgments}
The work of Guilherme K. Nakassima was supported by the Coordenação de Aperfeiçoamento de Pessoal de Nível Superior - Brasil (CAPES) - Finance Code 001. The work of Marcio Gameiro was partially supported by the National Science Foundation under awards DMS-1839294 and HDR TRIPODS award CCF-1934924, DARPA contract HR0011-16-2-0033, National Institutes of Health award R01 GM126555, and by CNPq grant 309073/2019-7.

\appendix
\section*{Appendices}
\addcontentsline{toc}{section}{Appendices}
\renewcommand{\thesubsection}{\Alph{subsection}}
\subsection{Proof of Theorem~\ref{thm:biform_haar_transform}}\label{apdx:biform}
Here we present the full proof of Theorem~\ref{thm:biform_haar_transform}. While lengthy, this proof not only validates the theorem that allows us to deal with quadratic nonlinearities, but also hints at how to find bounds for the radii polynomials. 

Let us outline the general strategy. We first make heuristical calculations to find an expression for the Haar transforms; then, by proving that they are indeed in $\ell^2(\R)$, the uniqueness of the Haar wavelet series justifies the calculations.

Let $\text{\textbf{c}},\text{\textbf{d}} \in \ell^2(\R)$, and assume that $\text{\textbf{c}}^T\text{\textbf{w}}(t)\text{\textbf{w}}^T(t)\text{\textbf{d}} \in L^2([0,1])$. We have
\begin{equation*}
    \text{\textbf{c}}^T\text{\textbf{w}}(t)\text{\textbf{w}}^T(t)\text{\textbf{d}} = \text{\textbf{a}}^T\text{\textbf{h}}(t)\text{\textbf{h}}^T(t)\text{\textbf{b}}
\end{equation*}
% \begin{align*}
%     \text{\textbf{c}}^T\text{\textbf{w}}(t)\text{\textbf{w}}^T(t)\text{\textbf{d}} &= \text{\textbf{c}}^T P\text{\textbf{h}}(t)\text{\textbf{h}}^T(t)P^T \text{\textbf{d}} \\
%     &= \text{\textbf{a}}^T
%     \begin{pmatrix}
%         \phi(t)\phi(t)       & \phi(t)\psi(t)       & \phi(t)\psi_{1,0}(t)      & \phi(t)\psi_{1,1}(t)        & \cdots \\
%         \psi(t)\phi(t)       & \psi(t)\psi(t)       & \psi(t)\psi_{1,0}(t)      & \psi(t)\psi_{1,1}(t)        & \cdots \\
%         \psi_{1,0}(t)\phi(t) & \psi_{1,0}(t)\psi(t) & \psi_{10}(t)\psi_{1,0}(t) & \psi_{1,0}(t)\psi_{1,1}(t)  & \cdots \\
%         \psi_{1,1}(t)\phi(t) & \psi_{1,1}(t)\psi(t) & \psi_{11}(t)\psi_{1,0}(t) & \psi_{1,1}(t)\psi_{1,1}(t)  & \cdots \\
%         \vdots              & \vdots                & \vdots                    &           \vdots                      & \ddots \\
%     \end{pmatrix}
%     \text{\textbf{b}}
% \end{align*}
where we denote $\text{\textbf{a}} = P^T\text{\textbf{c}}$ and $\text{\textbf{b}} = P^T\text{\textbf{d}}$ for simplicity. Let us study the matrix $\text{\textbf{h}}(t)\text{\textbf{h}}^T(t)$ in more depth. First, by definition, we have $\psi_{j,k}(t)\psi_{j,q}(t) \not= 0$ if and only if $k = q$. For $\psi_{j,k}(t)\psi_{p,q}(t)$ when $j \not= p$, without loss of generality, consider $j < p$ for a fixed $p$. Because of the nesting property, $\psi_{j,k}$ is constant (possibly zero) in $\supp\psi_{p,q}$, so we can assume $\psi_{j,k}(t) = \psi_{j,k}\left(\frac{q+0.5}{2^p}\right)$ for $t \in \supp\psi_{p,q}$. Thus, we can write $\psi_{j,k}(t)\psi_{p,q}(t)$ as
\begin{equation*}
    \psi_{j,k}(t)\psi_{p,q}(t) = \psi_{j,k}\left(\frac{q+0.5}{2^p}\right)\psi_{p,q}(t) = (H_{m/2})_{i,l}\psi_{l}(t).
\end{equation*}
Hence, after adjusting indices and taking into account the symmetry of the matrix, we have proved the following:
\begin{lemma}\label{lem:h_hT_recursive_formula}
    The product $\text{\normalfont{\textbf{h}}}(t)\text{\normalfont{\textbf{h}}}^T(t)$ can be recursively calculated for $m = 2^j$, $j = 0,1,2,\dots$ as
    \begin{equation*}
        \text{\normalfont{\textbf{h}}}_1(t)\text{\normalfont{\textbf{h}}}_1^T(t) = \psi_1^2(t) = \phi^2(t) \quad,\quad 
        \text{\normalfont{\textbf{h}}}_{2m}(t) \text{\normalfont{\textbf{h}}}_{2m}^T(t) = 
        \begin{bmatrix}
            \text{\normalfont{\textbf{h}}}_{m}(t) \text{\normalfont{\textbf{h}}}_{m}^T(t) & \Upsilon_m(t) \\[0.1in]
            \Upsilon_m^T(t) & \Delta_m(t)
        \end{bmatrix}
    \end{equation*}
    where, for $i,l = 1,...,2^m$, $\Gamma_m$ and $\Delta_m$ are $m \times m$ matrices defined element-wise as
    \begin{equation*}
        (\Upsilon_m)_{i,l}(t) = (H_m)_{i,l}\psi_{m+l}(t) \quad,\quad (\Delta_m)_{i,l}(t) = 
        \begin{cases}
            \psi_i^2(t) &,\text{ if } i = l \\
            0 &,\text{ otherwise}.
        \end{cases}
    \end{equation*}
\end{lemma}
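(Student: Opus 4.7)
My plan is to read off the block structure of $\text{\textbf{h}}_{2m}(t)\text{\textbf{h}}_{2m}^T(t)$ directly from the definition of $\text{\textbf{h}}_{2m}(t)$ and then identify each block by appealing to the nesting property stated just above the lemma. The crucial observation is that, with $m = 2^j$, the components of $\text{\textbf{h}}_{2m}(t)$ indexed by $m+1, \ldots, 2m$ are precisely the finest-resolution wavelets $\psi_{j,0}, \psi_{j,1}, \ldots, \psi_{j,m-1}$, so the partition
$\text{\textbf{h}}_{2m}(t) = \bigl(\text{\textbf{h}}_m(t)^T,\; \psi_{m+1}(t),\ldots,\psi_{2m}(t)\bigr)^T$
induces the natural $2\times 2$ block decomposition whose top-left block is $\text{\textbf{h}}_m(t)\text{\textbf{h}}_m^T(t)$ by definition.

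For the top-right (and by symmetry, bottom-left) block, the $(i,l)$ entry is $\psi_i(t)\psi_{m+l}(t)$ with $i\in\{1,\ldots,m\}$ and $l\in\{1,\ldots,m\}$. Since every $\psi_i$ with $i\le m$ is either the scaling function $\phi$ (constant on $[0,1)$) or a wavelet at resolution strictly coarser than $j$, the nesting property guarantees that $\psi_i$ is constant on $\supp\psi_{m+l}$. The support of $\psi_{m+l}$ is the dyadic interval $[(l-1)/m,\, l/m)$, whose midpoint is $t_l = (l-0.5)/m$; hence
\begin{equation*}
    \psi_i(t)\psi_{m+l}(t) \;=\; \psi_i(t_l)\,\psi_{m+l}(t) \;=\; (H_m)_{i,l}\,\psi_{m+l}(t),
\end{equation*}
where the last equality is precisely the definition of the Haar matrix $H_m$. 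This gives the formula for $\Upsilon_m(t)$ claimed in the statement.

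For the bottom-right block, the $(i,l)$ entry is $\psi_{m+i}(t)\psi_{m+l}(t)$, and both factors live at the same resolution $j$. Their supports are the dyadic intervals $[(i-1)/m,\,i/m)$ and $[(l-1)/m,\,l/m)$, which are either identical (when $i = l$) or disjoint (when $i \neq l$). The product is therefore $\psi_{m+i}^2(t)$ on the diagonal and zero off of it, matching $\Delta_m(t)$. The base case $m = 1$ is immediate since $\text{\textbf{h}}_1(t) = \phi(t)$.

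There is no real obstacle here; the proof is essentially a bookkeeping exercise. The only subtlety is keeping track of which indices in $\text{\textbf{h}}_{2m}(t)$ correspond to which resolution level so that the nesting property may be applied to reduce each product $\psi_i(t)\psi_{m+l}(t)$ with $i\le m$ to a scalar times a single wavelet, and recognizing that scalar as an entry of $H_m$.
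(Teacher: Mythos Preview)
Your proof is correct and follows essentially the same approach as the paper: both arguments identify the off-diagonal block by invoking the nesting property to replace the coarser wavelet by its (constant) value at the midpoint of $\supp\psi_{m+l}$ and recognize that constant as the Haar-matrix entry $(H_m)_{i,l}$, and both handle the bottom-right block via the disjointness of same-resolution supports. Your write-up is in fact more explicit than the paper's, which dispatches the lemma in a single paragraph and leaves the index bookkeeping to the reader; note only that the diagonal entries you compute are $\psi_{m+i}^2(t)$, which is what the paper \emph{intends} for $\Delta_m$ (the displayed $\psi_i^2$ in the statement is an index slip).
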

This justifies the decomposition of $\text{\normalfont{\textbf{h}}}(t)\text{\normalfont{\textbf{h}}}^T(t)$ as in Theorem~\ref{thm:biform_haar_transform}, that is, $\text{\textbf{h}}(t)\text{\textbf{h}}^T(t) = \Omega(t) + \Omega^T(t) + \Theta(t)$. 
We treat each term separately. First, for $\text{\textbf{a}}^T\Omega(t)\text{\textbf{b}}$, dividing $\text{\textbf{b}}$ in $2^j$ vector blocks, the product $\Omega(t)\text{\textbf{b}}$ is given element-wise by
\begin{align*}
    (\Omega(t)\text{\textbf{b}})_i &= \sum_{p=j}^\infty (\Upsilon_{2^p})_{i,*}\text{\textbf{b}}^*_{2^p}
    = \sum_{p=j}^\infty \sum_{q=1}^{2^p} (H_{2^p})_{i,q}\,\psi_{2^p+q}(t)\,b_{2^p+q} \\
    &= \sum_{p=j}^\infty (H_{2^p})_{i,*}\,(\text{\textbf{b}}_{2^p}^* \odot \text{\textbf{h}}_{2^p}^*(t)) =: (\Tilde{\Omega} (\text{\textbf{b}} \odot \text{\textbf{h}}(t)))_i
\end{align*}
where $\odot$ denotes the Hadamard (i.e. element-wise) product, and the matrix $\Tilde{\Omega}$ can be recursively constructed as
\begin{align*}
    \Tilde{\Omega}_1 = 0 \quad,\quad \Tilde{\Omega}_{2m} = 
    \begin{bmatrix}
        \Tilde{\Omega}_{m} & H_m \\
        0_m & 0_m
    \end{bmatrix}
	\quad, \text{ for } m = 2^j \text{ and } j = 0,1,2,...
\end{align*}
Thus, the product $\text{\textbf{a}}^T\Omega(t)\text{\textbf{b}}$ is given by
\begin{align*}
    \text{\textbf{a}}^T\Omega(t)\text{\textbf{b}} &= (\text{\textbf{c}}^T P) \Tilde{\Omega} \left(\text{\textbf{b}} \odot \text{\textbf{h}}(t)\right) = \sum_{i=1}^\infty \sum_{l=1}^\infty c_i(P\Tilde{\Omega})_{i,l}b_l\psi_l(t).
\end{align*}
In particular, if we assume $\text{\textbf{a}}^T\Omega(t)\text{\textbf{b}} \in L^2([0,1])$, then 
\begin{equation}\label{eq:haar_transform_biform_Omega_element}
    \left(\mathcal{H}\left(\text{\textbf{a}}^T\Omega(t)\text{\textbf{b}}\right)\right)_l = b_l \sum_{i=1}^\infty c_i(P\Tilde{\Omega})_{i,l} = (\text{\textbf{c}}^TP\Tilde{\Omega})_l b_l.
\end{equation}
or written in another way, $\mathcal{H}\left(\text{\textbf{a}}^T\Omega(t)\text{\textbf{b}}\right) = \big(\Tilde{\Omega}^T P^T \text{\textbf{c}}\big) \odot \text{\textbf{b}}$. 

Also, since the output of $\text{\textbf{a}}^T\Omega^T(t)\text{\textbf{b}}$ is a real number, then $\text{\textbf{a}}^T\Omega^T(t)\text{\textbf{b}} = \left(\text{\textbf{a}}^T\Omega^T(t)\text{\textbf{b}}\right)^T = \text{\textbf{b}}^T\Omega(t)\text{\textbf{a}}$,
and thus $\mathcal{H}\left(\text{\textbf{a}}^T\Omega^T(t)\text{\textbf{b}}\right) = \big(\Tilde{\Omega}^T P^T \text{\textbf{d}}\big) \odot \text{\textbf{a}}$ as well.

For the term $\mathcal{H}(\text{\textbf{a}}^T\Theta(t)\text{\textbf{b}})$, we have
\begin{equation*}
    \text{\textbf{a}}^T\Theta(t)\text{\textbf{b}} = \sum_{i=1}^\infty a_ib_i\psi_i^2(t) \quad,\quad \psi_i^2(t) = 
    \begin{cases}
        2^j &, \displaystyle \text{ if } \frac{k}{2^j} \leq t \leq \frac{k+1}{2^j}; \\[1ex]
        0 &, \text{ otherwise.}
    \end{cases}
\end{equation*}
Then $\psi_i^2 \in L^2([0,1])$, and we can write it as a Haar wavelet series:
\begin{equation*}
    \psi_i^2(t) = \sum_{l=1}^\infty \gamma_{i,l} \psi_l(t) = \Gamma\text{\textbf{h}}(t) \quad,\quad \gamma_{i,l} := \int_0^1 \psi_i^2(t) \psi_l(t) \,dt.
\end{equation*}
Similar to $P$, the matrix $\Gamma$ also has a recursive structure:

\begin{lemma}
    The matrix $\Gamma$ can be recursively calculated by
    \begin{equation}\label{eq:haar_transform_biform_Theta_matrix}
        \Gamma_1 = 1 \quad,\quad \Gamma_{2m} = 
        \begin{bmatrix}
            \Gamma_m & 0_m \\
            H_m^T & 0_m
        \end{bmatrix}
    	\quad, \text{ for } m = 2^j \text{ and }  j = 0,1,2,3,...
    \end{equation}
\end{lemma}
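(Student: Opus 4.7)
The plan is to prove the recursion by induction on $J$, where $m = 2^J$, with the inductive hypothesis that $\Gamma_m$ produced by the formula equals the $m \times m$ truncation of the infinite matrix $\Gamma$ whose entries are $\gamma_{i,l} = \int_0^1 \psi_i^2(t)\,\psi_l(t)\,dt$. The base case $\Gamma_1 = [1]$ follows from $\int_0^1 \phi^2\,dt = 1$.

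For the inductive step, I would partition the $2m \times 2m$ truncation of $\Gamma$ into four $m \times m$ blocks and identify each with the corresponding block in the recursive formula. The top-left block coincides with $\Gamma_m$ by the inductive hypothesis. For the top-right block ($i \leq m$, $m < l \leq 2m$), the function $\psi_i^2$ equals either $1$ (if $i = 1$) or $2^{j_i}$ on a single dyadic interval of length $2^{-j_i}$ with $j_i \leq J-1$, so it is constant on every dyadic interval of length $2^{-J}$; since $\psi_l = \psi_{J, k_l}$ is supported on such an interval, the nesting property says $\psi_i^2$ is constant on $\supp \psi_l$, and the zero-mean property $\int \psi_l = 0$ yields $\gamma_{i,l} = 0$. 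For the bottom-right block, both indices correspond to wavelets at level $J$: either the supports are disjoint (so $\gamma_{i,l} = 0$) or $i = l$, in which case $\gamma_{i,i} = \int \psi_i^3\,dt = 0$ by the sign antisymmetry of $\psi_i$ on the two halves of its support.

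The only substantive block is the bottom-left. Writing $i = m + q$ with $q \in \{1,\dots,m\}$, so that $\psi_i = \psi_{J, q-1}$ has support $[(q-1)/m,\, q/m]$ and midpoint $t_q := (q-0.5)/m$, the nesting property implies that for every $l \leq m$ the wavelet $\psi_l$ is constant on $\supp \psi_i$ with value $\psi_l(t_q)$. Therefore
\begin{equation*}
    \gamma_{m+q,\, l} \;=\; 2^J \int_{\supp \psi_i} \psi_l(t)\,dt \;=\; 2^J \cdot \psi_l(t_q) \cdot 2^{-J} \;=\; \psi_l(t_q) \;=\; (H_m)_{l,q} \;=\; (H_m^T)_{q,l},
\end{equation*}
which matches the bottom-left block of the recursive formula and completes the induction. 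The main obstacle is purely bookkeeping in this last step, namely the careful match between the indexing convention for level-$J$ wavelets and the columns of $H_m$; the three vanishing blocks fall out immediately from the zero-mean property of the Haar wavelets, while the one nonzero block expresses that integrating a coarse $\psi_l$ against the normalized indicator of a fine dyadic interval reproduces $\psi_l$'s value at the midpoint of that interval.
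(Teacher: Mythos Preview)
Your proof is correct and follows essentially the same approach as the paper: partition $\Gamma_{2m}$ into four $m\times m$ blocks, use the nesting property together with the zero-mean of Haar wavelets to kill the right-hand blocks, and evaluate the bottom-left block by observing that a coarse wavelet $\psi_l$ is constant on the support of a level-$J$ wavelet, which reproduces the Haar matrix entry $(H_m)_{l,q}$. Your bookkeeping in the bottom-left block (writing $i = m+q$ and matching to $t_q = (q-0.5)/m$) is in fact cleaner than the paper's, which is slightly loose with indices there; the only cosmetic difference is that the paper treats the bottom-right block together with the top-right (since $\psi_i^2$ is constant on $\supp\psi_l$ even when $i=l$), whereas you handle it separately via the odd symmetry of $\psi_i^3$.
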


\begin{proof}
    First, calculating $\Gamma_{1} = \gamma_{11}$ is straightforward. By definition, for $i,l = 1,2,...,m$, we have
    \begin{equation*}
        (\Gamma_{2m})_{i,l} = \int_0^1 \psi_i^2(t)\psi_l(t)\,dt = (\Gamma_m)_{i,l}
    \end{equation*}
    
    Suppose now that $m+1 \leq l \leq 2m$, and denote using the two-index notation $\psi_l = \psi_{p,q}$ and $\psi_i = \psi_{j,k}$. For $2 \leq i \leq 2m$, due to the nesting property, $\psi_{j,k}$ is constant (possibly zero) in $\supp\psi_{p,q}$. However, even if $\psi_{j,k}$ is non-zero in $\supp\psi_{p,q}$,
    \begin{equation*}
        (\Gamma_{2m})_{i,l} = \int_0^1 \psi_i^2(t)\psi_l(t)\,dt = 2^j \int_\frac{k}{2^j}^\frac{k+1}{2^j} \psi_{p,q}(t)\,dt = 2^j \int_\frac{q}{2^p}^\frac{q+1}{2^p} \psi_{p,q}(t)\,dt = 0.
    \end{equation*}
    Similar reasoning applies for $i = 1$ (in which case $\psi_1(t) = \phi(t) \equiv 1$ in $[0,1]$). 
    % Observe that the terms for which $j < p$ correspond to the upper right zero matrix, and the terms for which $j = p$ correspond to the lower right zero matrix.
    
    Suppose now that $m+1 \leq i \leq 2m$ and $l \leq m$. For $l=1$ a straightforward calculation show that $(\Gamma_{2m})_{i,l} = 1 = (H_m)_{1,i}$. For $l > 1$, since $p < j$, we can apply the same reasoning as in the proof of Lemma~\ref{lem:h_hT_recursive_formula}, yielding
    \begin{equation*}
        \gamma_{i,l} = 2^j \int_\frac{k}{2^j}^\frac{k+1}{2^j} \psi_{p,q}(t)\,dt = 2^j \int_\frac{k}{2^j}^\frac{k+1}{2^j} \psi_{p,q}\left(\frac{k+0.5}{2^j}\right)\,dt = \psi_l(t_i) = (H_m)_{l,i}.
    \end{equation*}
\end{proof}

Hence,
\begin{equation*}
    \text{\textbf{a}}^T\Theta(t)\text{\textbf{b}} = \sum_{i=1}^\infty a_ib_i\psi_i^2(t) =     \sum_{i=1}^\infty a_ib_i \sum_{l=1}^\infty \gamma_{i,l}\psi_l(t) = (\text{\textbf{a}}\odot\text{\textbf{b}})^T \Gamma \, \text{\textbf{h}}(t)
\end{equation*}
Thus, if $\Gamma^T (\text{\textbf{a}}\odot\text{\textbf{b}}) \in \ell^2(\R)$, then $\mathcal{H}(\text{\textbf{a}}^T\Theta(t)\text{\textbf{b}}) = \Gamma^T (\text{\textbf{a}}\odot\text{\textbf{b}})$.

Now we must prove that our tentative Haar transforms are indeed elements of $\ell^2(\R)$; Theorem \ref{thm:biform_haar_transform} follows then from the uniqueness of the Haar series. Before that, we prove a few lemmas:

\begin{lemma}\label{lem:P_Omega_structure}
    The matrix $P\Tilde{\Omega}$ is recursively given by
    \begin{equation}\label{eq:P_Omega_recursive_formula}
        P_{2m}\Tilde{\Omega}_{2m} = 
        \begin{bmatrix}
            P_m\Tilde{\Omega}_m & P_mH_m \\[10pt]
            \displaystyle\frac{1}{4\sqrt{m^3}}H_m^T\Tilde{\Omega}_m & \displaystyle\frac{1}{4\sqrt{m}}I_m
        \end{bmatrix}.
    \end{equation}
\end{lemma}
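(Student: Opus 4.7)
The plan is a direct block-matrix multiplication using the recursive formulas already established for $P$ and $\tilde{\Omega}$, together with the unitarity of the discrete Haar transform matrix $HT_m = H_m/\sqrt{m}$.

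First, I would recall the two recursions. From Theorem~\ref{thm:P_recursive_formula},
\begin{equation*}
    P_{2m} = \begin{bmatrix} P_m & -\dfrac{1}{4\sqrt{m^3}}H_m \\[6pt] \dfrac{1}{4\sqrt{m^3}}H_m^T & 0_{m\times m} \end{bmatrix},
\end{equation*}
and from the expression for $\tilde{\Omega}$ derived just before the statement,
\begin{equation*}
    \tilde{\Omega}_{2m} = \begin{bmatrix} \tilde{\Omega}_m & H_m \\ 0_m & 0_m \end{bmatrix}.
\end{equation*}
Multiplying these two block matrices entry-by-entry, the four blocks of $P_{2m}\tilde{\Omega}_{2m}$ come out as $P_m\tilde{\Omega}_m$, $P_m H_m$, $\frac{1}{4\sqrt{m^3}}H_m^T\tilde{\Omega}_m$, and $\frac{1}{4\sqrt{m^3}}H_m^T H_m$ respectively; three of these already match the claimed formula.

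The only block that requires a non-trivial simplification is the bottom-right one. Here I would use the unitarity of $HT_m$: since $HT_m HT_m^T = HT_m^T HT_m = I_m$ and $H_m = \sqrt{m}\,HT_m$, we get $H_m^T H_m = m\, I_m$. Substituting,
\begin{equation*}
    \frac{1}{4\sqrt{m^3}}H_m^T H_m = \frac{m}{4\sqrt{m^3}}I_m = \frac{1}{4\sqrt{m}}I_m,
\end{equation*}
which is exactly the bottom-right block in \eqref{eq:P_Omega_recursive_formula}.

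There is really no hard step; the calculation is completely mechanical once the recursions are written down side by side. The only thing worth double-checking is the identity $H_m^T H_m = m\,I_m$, which is the content of the remark (made just after Definition of the discrete Haar transform matrix) that $HT_m$ is unitary and hence $H_m^{-1} = \tfrac{1}{m}H_m^T$. Since the base case is vacuous (the formula \eqref{eq:P_Omega_recursive_formula} is stated only for $m\geq 1$, and no initial condition is needed beyond the already-known $P_1 = 1/2$ and $\tilde{\Omega}_1 = 0$), the proof is complete after this single block multiplication.
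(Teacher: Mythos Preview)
Your proof is correct and follows exactly the approach the paper indicates: it simply says the lemma ``is proven by multiplying the recursive formulas for $P$ and $\Tilde{\Omega}$,'' which is precisely the block-matrix multiplication you carry out, including the use of $H_m^T H_m = m I_m$ for the bottom-right block.
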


This is proven by multiplying the recursive formulas for $P$ and $\Tilde{\Omega}$.

\begin{lemma}\label{lem:H_Omega_structure}
    \label{prop:HmOmegam_element}
    The matrix $H_m^T\Tilde{\Omega}_m$ is given element-wise as
    \begin{equation}
        (H_m^T\Tilde{\Omega}_m)_{i,l} = 
        \begin{cases}
            0 & \text{, if } l = 1; \\
            \psi_l^2(t_i) & \text{, otherwise.}
        \end{cases}
    \end{equation}
\end{lemma}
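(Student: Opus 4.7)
The plan is to proceed by induction on $m = 2^j$, using the block-recursive structures of $\Tilde{\Omega}_m$ and $H_m$.

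The base case $m = 1$ is immediate: since $\Tilde{\Omega}_1 = 0$, the product $H_1^T\Tilde{\Omega}_1$ is the zero $1\times1$ matrix, which agrees with the claim (the sole entry has $l = 1$). For the inductive step, I expand the product by the block decomposition of $\Tilde{\Omega}_{2m}$. Since the lower block-row of $\Tilde{\Omega}_{2m}$ vanishes, only the first $m$ rows of $H_{2m}$ contribute, and one obtains
\begin{equation*}
    (H_{2m}^T\Tilde{\Omega}_{2m})_{i,l} = \sum_{p=1}^{m}(H_{2m})_{p,i}\,(\Tilde{\Omega}_{2m})_{p,l}.
\end{equation*}
A key preliminary observation is that for $p\le m$ the wavelet $\psi_p$ belongs to a resolution level at most $j-1$, hence is constant on every dyadic interval of length $1/m$. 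Since the finer sample points satisfy $t_{2l-1}^{(2m)}, t_{2l}^{(2m)}\in[(l-1)/m,l/m)$, this gives the ``column-duplication'' identity $(H_{2m})_{p,i} = (H_m)_{p,\lceil i/2\rceil}$ for $p\le m$.

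I then split into two cases according to whether $l\le m$ or $l>m$. When $l\le m$, the recursive formula yields $(\Tilde{\Omega}_{2m})_{p,l} = (\Tilde{\Omega}_m)_{p,l}$, and applying the column-duplication identity collapses the sum to $(H_m^T\Tilde{\Omega}_m)_{\lceil i/2\rceil,l}$. By the inductive hypothesis this equals $0$ if $l=1$ and otherwise $\psi_l^2(t_{\lceil i/2\rceil}^{(m)})$; the same argument used for $\psi_p$ above (now applied to $\psi_l^2$, which has resolution level at most $j-1$) shows that $\psi_l^2$ is constant on intervals of length $1/m$, so this value coincides with $\psi_l^2(t_i^{(2m)})$, as required. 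When $l>m$, the top-right block of $\Tilde{\Omega}_{2m}$ gives $(\Tilde{\Omega}_{2m})_{p,l} = (H_m)_{p,l-m}$, and the sum reduces to $(H_m^TH_m)_{\lceil i/2\rceil,\,l-m} = m\,\delta_{\lceil i/2\rceil,\,l-m}$ using the orthogonality $H_m^TH_m = mI_m$ (which follows from the unitarity of $HT_m$). Writing $l = m+k+1$ one checks that $\psi_l^2$ equals $m$ exactly on $[k/m,(k+1)/m)$, i.e.\ precisely at the two sample points $i\in\{2k+1,2k+2\}$, which are the same indices selected by the Kronecker delta; hence the value $m$ agrees with $\psi_l^2(t_i^{(2m)})$ in both branches.

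The step most likely to require care is the column-duplication identity $(H_{2m})_{p,i} = (H_m)_{p,\lceil i/2\rceil}$ and the analogous constancy statement for $\psi_l^2$; both rest on identifying the exact resolution level of a wavelet from its one-index label and comparing this with the dyadic location of the sample points $t_i^{(2m)} = (i-0.5)/(2m)$. Once these constancy facts are in hand, the two cases $l\le m$ and $l>m$ resolve cleanly via the inductive hypothesis and orthogonality of $H_m$, respectively, closing the induction.
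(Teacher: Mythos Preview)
Your proof is correct and takes a genuinely different route from the paper's. The paper computes $(H_m^T\Tilde{\Omega}_m)_{i,l}$ directly: it identifies the $l$-th column of $\Tilde{\Omega}_m$ (for $\psi_l=\psi_{p,q}$) as the vector $(\psi_r(t_{q+1}))_{r\le 2^p}$, so that the matrix entry becomes the inner product $\text{\textbf{h}}_{2^p}^T(t_i)\,\text{\textbf{h}}_{2^p}(t_{q+1})=1+\sum_{r=0}^{p-1}\psi_{r,s_r}(t_i)\psi_{r,s_r}(t_{q+1})$. It then carries out a geometric case analysis based on the nesting property of the Haar supports, showing that the sum telescopes to $2^p$ when $t_i\in\supp\psi_{p,q}$ and to $0$ otherwise. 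Your argument instead runs by induction on $m$, using the recursive block formulas for $\Tilde{\Omega}_{2m}$ and $H_{2m}$: the case $l\le m$ reduces to the inductive hypothesis via the column-duplication identity $(H_{2m})_{p,i}=(H_m)_{p,\lceil i/2\rceil}$, and the case $l>m$ collapses to $(H_m^TH_m)_{\lceil i/2\rceil,\,l-m}=m\,\delta_{\lceil i/2\rceil,\,l-m}$ by orthogonality. Your approach is more algebraic and arguably cleaner, since it reuses structures already set up in the paper and avoids the geometric case split; the paper's approach, on the other hand, makes the underlying telescoping mechanism transparent and does not require setting up an induction.
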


\begin{proof}
First, observe that, since the first column of the matrix $\Tilde{\Omega}_m$ is zero, then the first column of $H_m^T\Tilde{\Omega}_m$ is also zero, proving the case $l = 1$. 

For $l \geq 2$, fix an element $(H_m^T\Tilde{\Omega}_m)_{i,l}$ and make $\psi_i = \psi_{j,k}$ and $\psi_l = \psi_{p,q}$ using the two-index notation. Due to the structure of $H_m^T$ and $\Tilde{\Omega}_m$,
\begin{align*}
    (H_m^T\Tilde{\Omega}_m)_{i,l} = \text{\textbf{h}}_{2^p}^T(t_i) \text{\textbf{h}}_{2^p}(t_{q+1}) = 1 + \sum_{r=0}^{p-1}\sum_{s=0}^{2^r-1} \psi_{r,s}(t_i)\psi_{r,s}(t_{q+1}).
\end{align*}
where $t_i = 2^{-j}(k-0.5)$ and $\displaystyle t_{q+1} = 2^{-p}(q+0.5)$. Since $t_i$ and $t_{q+1}$ are fixed, for each $r$ there is at most a single wavelet $\psi_{r,s_r}$ whose support contains both $t_i$ and $t_{q+1}$, because the intervals where wavelets at the same resolution level are non-zero do not overlap. Thus,
\begin{equation*}
    (H_m^T\Tilde{\Omega}_m)_{i,l} = 1 + \sum_{r=0}^{p-1} \psi_{r,s_r}(t_i)\psi_{r,s_r}(t_{q+1})
\end{equation*}
If neither $\psi_{r,s_r}(t_i)$ nor $\psi_{r,s_r}(t_{q+1})$ are zero, only two cases may occur: either $\psi_{r,s_r}(t_i) = \psi_{r,s_r}(t_{q+1})$ or $\psi_{r,s_r}(t_i) = -\psi_{r,s_r}(t_{q+1})$. The possible situations are depicted in Figure \ref{fig:product_psi_rs}, supposing without loss of generality that $t_i \leq t_{q+1}$.

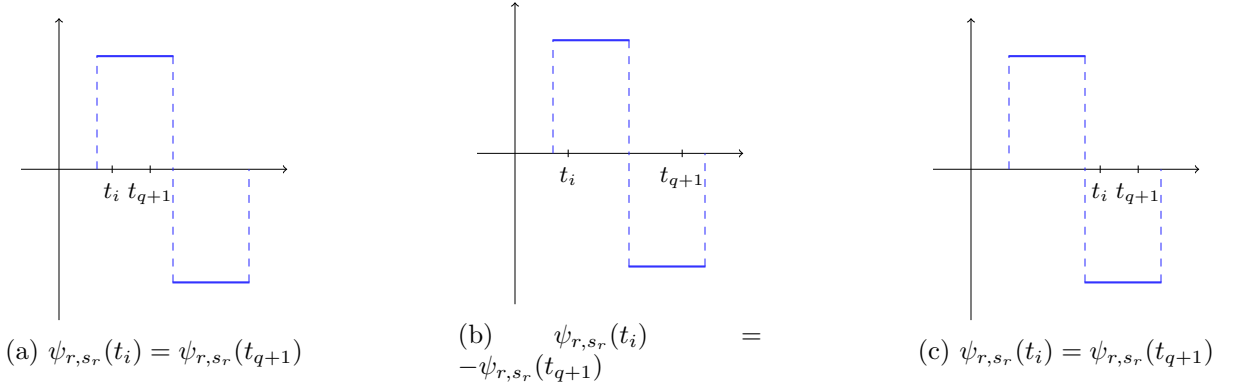
\begin{figure}[ht]
    \centering
    \begin{subfigure}{0.25\textwidth}
        \centering
        \begin{tikzpicture}
            % Draw axis
            \draw[->,very thin] (0,-2) -- (0,2);
            \draw[->,very thin] (-0.5,0) -- (3,0);
            
            % Draw coarse wavelet
            \draw[dashed,blue!80] (0.5,0) -- (0.5,1.5);
            \draw[thick,blue!80] (0.493,1.5) -- (1.507,1.5);
            \draw[dashed,blue!80] (1.5,1.5) -- (1.5,0);
            \draw[dashed,blue!80] (1.5,-1.5) -- (1.5,0);
            \draw[thick,blue!80] (1.493,-1.5) -- (2.507,-1.5);
            \draw[dashed,blue!80] (2.5,-1.5) -- (2.5,0);
            
            \draw[thin] (0.7,0.05) -- (0.7,-0.05) node[anchor=north]{\footnotesize $t_i$};
            \draw[thin] (1.2,0.05) -- (1.2,-0.05) node[anchor=north]{\footnotesize $t_{q+1}$};
        \end{tikzpicture}
        \caption{$\psi_{r,s_r}(t_i) = \psi_{r,s_r}(t_{q+1})$}
    \end{subfigure}
    \hfill
    \begin{subfigure}{0.25\textwidth}
        \centering
        \begin{tikzpicture}
            % Draw axis
            \draw[->,very thin] (0,-2) -- (0,2);
            \draw[->,very thin] (-0.5,0) -- (3,0);
            
            % Draw coarse wavelet
            \draw[dashed,blue!80] (0.5,0) -- (0.5,1.5);
            \draw[thick,blue!80] (0.493,1.5) -- (1.507,1.5);
            \draw[dashed,blue!80] (1.5,1.5) -- (1.5,0);
            \draw[dashed,blue!80] (1.5,-1.5) -- (1.5,0);
            \draw[thick,blue!80] (1.493,-1.5) -- (2.507,-1.5);
            \draw[dashed,blue!80] (2.5,-1.5) -- (2.5,0);
            
            \draw[thin] (0.7,0.05) -- (0.7,-0.05) node[anchor=north]{\footnotesize $t_i$};
            \draw[thin] (2.2,0.05) -- (2.2,-0.05) node[anchor=north]{\footnotesize $t_{q+1}$};
        \end{tikzpicture}
        \caption{$\psi_{r,s_r}(t_i) = -\psi_{r,s_r}(t_{q+1})$}
    \end{subfigure}
    \hfill
    \begin{subfigure}{0.25\textwidth}
        \centering
        \begin{tikzpicture}
            % Draw axis
            \draw[->,very thin] (0,-2) -- (0,2);
            \draw[->,very thin] (-0.5,0) -- (3,0);
            
            % Draw coarse wavelet
            \draw[dashed,blue!80] (0.5,0) -- (0.5,1.5);
            \draw[thick,blue!80] (0.493,1.5) -- (1.507,1.5);
            \draw[dashed,blue!80] (1.5,1.5) -- (1.5,0);
            \draw[dashed,blue!80] (1.5,-1.5) -- (1.5,0);
            \draw[thick,blue!80] (1.493,-1.5) -- (2.507,-1.5);
            \draw[dashed,blue!80] (2.5,-1.5) -- (2.5,0);
            
            \draw[thin] (1.7,0.05) -- (1.7,-0.05) node[anchor=north]{\footnotesize $t_i$};
            \draw[thin] (2.2,0.05) -- (2.2,-0.05) node[anchor=north]{\footnotesize $t_{q+1}$};
        \end{tikzpicture}
        \caption{$\psi_{r,s_r}(t_i) = \psi_{r,s_r}(t_{q+1})$}
    \end{subfigure}
    \caption{All possible situations for the product $\psi_{r,s_r}(t_i)\psi_{r,s_r}(t_{q+1})$}
    \label{fig:product_psi_rs}
\end{figure}

Let us study what happens when we change the resolution level $r$:
\begin{enumerate}[wide,label=\alph*)]
    \item Suppose that $\psi_{r,s_r}(t_i) = \psi_{r,s_r}(t_{q+1})$ (situations (a) or (c) in Figure \ref{fig:product_psi_rs}) for every $r = 0,...,p-1$. Then $t_i = t_{q+1}$, since for $r=p-1$ we only sample $\psi_{r,s_r}$ at the times $t_1=\frac{s_r+0.25}{2^r}$ and $t_2=\frac{s_r+0.75}{2^r}$, and we have $\psi_{r,s_r}(t_1) = -\psi_{r,s_r}(t_2)$. Thus
    \begin{equation*}
        (H_m^T\Tilde{\Omega}_m)_{i,l} = 1 + \sum_{r=0}^{p-1} \psi_{r,s_r}^2(t_i) = 1 + \sum_{r=0}^{p-1} 2^r = 2^p = \psi_l^2(t_i) 
    \end{equation*}
    
    \item Suppose that $\psi_{r,s_r}(t_i) = -\psi_{r,s_r}(t_{q+1})$ (situation (b) in Figure \ref{fig:product_psi_rs}) happens for some $r$ for some $r \leq p-1$. Then $t_i \in [\frac{s}{2^r},\frac{s+0.5}{2^r}]$ and $t_{q+1} \in [\frac{s+0.5}{2^r},\frac{s+1}{2^r}]$. Due to the nesting property of the Haar wavelets, no finer wavelet has both $t_i$ and $t_{q+1}$ in its support, and for all coarser resolutions $\rho < r$ we have $\psi_{\rho,s_\rho}(t_i) = \psi_{\rho,s_\rho}(t_{q+1})$. Hence
    \begin{equation*}
        (H_m^T\Tilde{\Omega}_m)_{i,l} = 1 + \sum_{\rho=0}^{r-1} \psi_{\rho,s_\rho}^2(t_i) -  \psi_{r,s_r}^2(t_i) = 1 + \sum_{\rho=0}^{r-1} 2^\rho - 2^r = 0
    \end{equation*}
    Lastly, since $p > r$ and $t_q \in \text{supp }\psi_{p,q}$, then $t_i \not\in \text{supp}\psi_{p,q}$. Thus $\psi_l^2(t_i) = \psi_{p,q}^2(t_i) = 0 = (H_m^T\Tilde{\Omega}_m)_{i,l}$.
\end{enumerate}
\end{proof}

Now we finally prove that $(\Tilde{\Omega}^T P^T \text{\normalfont{\textbf{c}}}) \odot \text{\normalfont{\textbf{b}}}$ and $\Gamma^T (\text{\normalfont{\textbf{a}}}\odot\text{\normalfont{\textbf{b}}})$ are indeed in $\ell^2(\R)$. 

\begin{proposition}\label{thm:Haar_transf_biform_bounded}
    The sequences $(\Tilde{\Omega}^T P^T \text{\normalfont{\textbf{c}}}) \odot \text{\normalfont{\textbf{b}}}$ and $\Gamma^T (\text{\normalfont{\textbf{a}}}\odot\text{\normalfont{\textbf{b}}})$ are in $\ell^2(\R)$ and satisfy, for some $C_1,C_2 > 0$,
    \begin{align*}
        \big\| (\Tilde{\Omega}^T P^T \text{\normalfont{\textbf{c}}}) \odot \text{\normalfont{\textbf{b}}} \big\|_{\ell^2} &\leq C_1 \|\text{\normalfont{\textbf{c}}}\|_{\ell^2} \|\text{\normalfont{\textbf{d}}}\|_{\ell^2} \\
        \left\| \Gamma^T (\text{\normalfont{\textbf{a}}}\odot\text{\normalfont{\textbf{b}}}) \right\|_{\ell^2(\R)} &\leq C_2 \|\text{\normalfont{\textbf{c}}}\|_{\ell^2} \|\text{\normalfont{\textbf{d}}}\|_{\ell^2}
    \end{align*}
\end{proposition}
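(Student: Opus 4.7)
The plan is to exploit the recursive block structure of $\Tilde{\Omega}$ and $\Gamma$ that propagates from the recursions in Theorem~\ref{thm:biform_haar_transform}, combined with the dyadic block decay $\|(P^T\text{\textbf{c}})^*_{2^j}\|_{\ell^2} \leq \|\text{\textbf{c}}\|_{\ell^2}/2^{j+1}$ from Proposition~\ref{prop:PTc_etimates}, in order to estimate both sequences block-by-block and sum a geometric series.

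For $(\Tilde{\Omega}^T P^T \text{\textbf{c}}) \odot \text{\textbf{b}}$, I would first unfold the recursion $\Tilde{\Omega}_{2m} = \left[\begin{smallmatrix} \Tilde{\Omega}_m & H_m \\ 0 & 0 \end{smallmatrix}\right]$ to show that for each index $l$ with $2^{n-1} < l \leq 2^n$ (write $m := 2^{n-1}$), the $l$-th column of the infinite matrix $\Tilde{\Omega}$ has entries $(H_m)_{i,\,l-m}$ in rows $i=1,\dots,m$ and zeros in every other row; the zero bottom blocks of the recursion are never overwritten in the infinite extension. Hence $(\Tilde{\Omega}^T P^T \text{\textbf{c}})_l = (H_m^T \Pi_m P^T \text{\textbf{c}})_{l-m}$, and using $\|H_m^T \text{\textbf{v}}\|_{\ell^\infty} \leq \sqrt{m}\,\|\text{\textbf{v}}\|_{\ell^2}$ (each column of $H_m$ has $\ell^2$ norm $\sqrt{m}$ because $H_m H_m^T = mI$) together with $\|\Pi_m P^T \text{\textbf{c}}\|_{\ell^2} \leq \tfrac{2}{\sqrt{3}}\|\text{\textbf{c}}\|_{\ell^2}$ from Proposition~\ref{prop:PTc_etimates}, the block contribution would be
\begin{equation*}
    \sum_{l=m+1}^{2m}\bigl|(\Tilde{\Omega}^T P^T \text{\textbf{c}})_l\bigr|^2 |b_l|^2 \;\leq\; \|H_m^T \Pi_m P^T \text{\textbf{c}}\|_{\ell^\infty}^2\,\|\text{\textbf{b}}^*_{2^n}\|_{\ell^2}^2 \;\leq\; m\cdot\tfrac{4}{3}\|\text{\textbf{c}}\|_{\ell^2}^2\cdot\|\text{\textbf{b}}^*_{2^n}\|_{\ell^2}^2.
\end{equation*}
Applying $\|\text{\textbf{b}}^*_{2^n}\|_{\ell^2} \leq \|\text{\textbf{d}}\|_{\ell^2}/2^{n+1}$ from (\ref{eq:PTc_block_estimate_infinite_vector}) reduces this to a constant times $2^{-n-1}\|\text{\textbf{c}}\|_{\ell^2}^2\|\text{\textbf{d}}\|_{\ell^2}^2$, which converges geometrically when summed over $n\geq 1$. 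The $l=1$ entry vanishes since the first column of $\Tilde{\Omega}$ is identically zero.

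An analogous argument handles $\Gamma^T(\text{\textbf{a}}\odot\text{\textbf{b}})$. The recursion $\Gamma_{2m}^T = \left[\begin{smallmatrix} \Gamma_m^T & H_m \\ 0 & 0 \end{smallmatrix}\right]$ gives that row $l$ of the infinite $\Gamma^T$ (for $2^{n-1}<l\leq 2^n$) is zero in the first $2^n$ columns and then consists of successive blocks $H_{2^{n+q}}[l,\,:]$ in columns $2^{n+q}+1$ through $2^{n+q+1}$, $q\geq 0$. Applying Cauchy--Schwarz block-by-block using $\|H_{2^{n+q}}[l,\,:]\|_{\ell^2} = 2^{(n+q)/2}$, the elementary bound $\|\text{\textbf{a}}^*_{2^k}\odot\text{\textbf{b}}^*_{2^k}\|_{\ell^2} \leq \|\text{\textbf{a}}^*_{2^k}\|_{\ell^2}\|\text{\textbf{b}}^*_{2^k}\|_{\ell^2}$, and the dyadic block decay from Proposition~\ref{prop:PTc_etimates}, yields $|(\Gamma^T(\text{\textbf{a}}\odot\text{\textbf{b}}))_l| \leq C \cdot 2^{-3n/2}\|\text{\textbf{c}}\|_{\ell^2}\|\text{\textbf{d}}\|_{\ell^2}$ for some constant $C$ after summing the geometric series in $q$. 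Squaring, multiplying by the $m = 2^{n-1}$ values of $l$ per block, and summing over $n\geq 1$ yields another convergent geometric series. The special coordinate $l=1$ is handled via $\gamma_{i,1}=1$, so $(\Gamma^T(\text{\textbf{a}}\odot\text{\textbf{b}}))_1 = \sum_i a_i b_i$ is controlled by $\|\text{\textbf{a}}\|_{\ell^2}\|\text{\textbf{b}}\|_{\ell^2}$ through Cauchy--Schwarz.

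The main obstacle is correctly identifying the column (resp.\ row) structure of the infinite matrices $\Tilde{\Omega}$ and $\Gamma^T$ obtained as the limit of the recursive finite matrices, in particular verifying that the zero bottom blocks from each recursion persist in the infinite extension so that each column/row is supported on only a finite portion that engages the corresponding $H_m$-type block. Beyond that, the estimates reduce to repeated applications of Cauchy--Schwarz together with the decay rates furnished by Proposition~\ref{prop:PTc_etimates}. Once both inequalities are established, Theorem~\ref{thm:biform_haar_transform} follows by appealing to the uniqueness of the Haar wavelet expansion of $\text{\textbf{c}}^T\text{\textbf{w}}(t)\text{\textbf{w}}^T(t)\text{\textbf{d}}$.
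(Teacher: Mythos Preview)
Your argument is correct (up to a harmless indexing slip: with your convention $2^{n-1}<l\le 2^n$, the relevant block of $\text{\textbf{b}}=P^T\text{\textbf{d}}$ is bounded by $\|\text{\textbf{d}}\|_{\ell^2}/2^{n}$, not $2^{n+1}$; the geometric series still converges). For $\Gamma^T(\text{\textbf{a}}\odot\text{\textbf{b}})$ your argument is essentially the paper's: both read off the row structure of $\Gamma^T$ from the recursion and sum the $H_{2^r}$-blocks against the dyadic decay of $P^T$. For $(\Tilde{\Omega}^T P^T\text{\textbf{c}})\odot\text{\textbf{b}}$, however, your route is genuinely different. The paper works with the \emph{product} $P\Tilde{\Omega}$, whose recursive block form is derived in Lemma~\ref{lem:P_Omega_structure} and whose off-diagonal block $H_m^T\Tilde{\Omega}_m$ is controlled elementwise via Lemma~\ref{lem:H_Omega_structure}; this yields the $\ell^2$-block estimate $\|(\Tilde{\Omega}^T P^T\text{\textbf{c}})^*_{2^j}\|_{\ell^2}\lesssim 2^{-j/2}\|\text{\textbf{c}}\|_{\ell^2}$ of~\eqref{eq:OmegaT_PT_c_block_estimate}. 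You instead factor the action, first applying $P^T$ (and invoking only Proposition~\ref{prop:PTc_etimates}) and then exploiting the finitely supported columns of $\Tilde{\Omega}$ alone, which bypasses both auxiliary lemmas. The trade-off is that your $\ell^\infty$ bound on the $l$-th entry grows like $\sqrt{m}\,\|\text{\textbf{c}}\|_{\ell^2}$, so the squared block contribution decays only like $2^{-n}$ rather than the paper's $2^{-3j}$. For the proposition as stated this is irrelevant, but the sharper estimate~\eqref{eq:OmegaT_PT_c_block_estimate} is reused verbatim later in the paper (e.g.\ in the proof of Proposition~\ref{prop:estimates_infinite_proj_infinite_vectors}), so the extra machinery pays for itself downstream.
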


\begin{proof}
    For $(\Tilde{\Omega}^T P^T \text{\normalfont{\textbf{c}}}) \odot \text{\normalfont{\textbf{b}}}$, if we divide $\text{\normalfont{\textbf{c}}}$ as in \eqref{eq:vec_block_division} and using the block structure from Lemma \ref{lem:P_Omega_structure}, each block of $(\Tilde{\Omega}^T P^T \text{\normalfont{\textbf{c}}})$ is given by
    \begin{align*}
        (\Tilde{\Omega}^T P^T \text{\normalfont{\textbf{c}}})_1 &= \sum_{r=0}^\infty \frac{1}{2^{\frac{3r}{2}+2}} (\text{\normalfont{\textbf{c}}}^*_{2^r})^T H^T_{2^r} \Tilde{\Omega}_{2^r} \\
        (\Tilde{\Omega}^T P^T \text{\normalfont{\textbf{c}}})^*_{2^j} &= (c_1,...,c_{2^j}) P_{2^j}H_{2^j} + \frac{1}{2^{\frac{j}{2}+2}} (\text{\normalfont{\textbf{c}}}^*_{2^j})^T + \sum_{r=j+1}^\infty \frac{1}{2^{\frac{3r}{2}+2}} (\text{\normalfont{\textbf{c}}}^*_{2^r})^T H^T_{2^r} \Tilde{\Omega}_{2^r}
    \end{align*}
    We bound each term in the right-hand side:
    \begin{itemize}[wide]
        \item $\|(c_1,...,c_{2^j}) P_{2^j}H_{2^j}\|_{\ell^2} \leq \big\|H_{2^j}^T\big\| \big\|P_{2^j}^T(c_1,...,c_{2^j})\big\|_{\ell^2} \leq \frac{\|\text{\normalfont{\textbf{c}}}\|_{\ell^2}}{2^{\frac{j}{2}+1}}$
        
        \item $\left\| \frac{1}{2^{\frac{j}{2}+2}} (\text{\normalfont{\textbf{c}}}^*_{2^j})^T \right\|_{\ell^2} \leq \frac{\left\|\text{\normalfont{\textbf{c}}}\right\|_{\ell^2}}{2^{\frac{j}{2}+2}}$
        
        \item From Lemma \ref{lem:H_Omega_structure}, $\max H^T_{2^j} \Omega^T_{2^j} = 2^j$; thus
        \begin{equation*}
            \left\|\sum_{r=j+1}^\infty \frac{1}{2^{\frac{3r}{2}+2}} (\text{\normalfont{\textbf{c}}}^*_{2^r})^T H^T_{2^r} \Omega^T_{2^r}\right\|_{\ell^2} \leq \sum_{r=j+1}^\infty \frac{1}{2^{\frac{3r}{2}+2}} \left\|\text{\normalfont{\textbf{c}}}\right\|_{\ell^2} \big\|H^T_{2^r} \Omega^T_{2^r}\big\|_{\ell^2} \leq \frac{1+\sqrt{2}}{2^{\frac{j}{2}+2}} \left\|\text{\normalfont{\textbf{c}}}\right\|_{\ell^2}
        \end{equation*}
    \end{itemize}
    Hence, 
    \begin{equation}\label{eq:OmegaT_PT_c_block_estimate}
        \big\|(\Tilde{\Omega}^T P^T \text{\normalfont{\textbf{c}}})^*_{2^j}\big\|_{\ell^2} \leq \left(1 + \frac{\sqrt{2}}{8}\right) \frac{\left\|\text{\normalfont{\textbf{c}}}\right\|_{\ell^2}}{2^\frac{j}{2}} \quad,\quad 
        |(\Tilde{\Omega}^T P^T \text{\normalfont{\textbf{c}}})_1| \leq  \frac{1+\sqrt{2}}{4} \left\|\text{\normalfont{\textbf{c}}}\right\|_{\ell^2}
    \end{equation}
    Also, for $i = 2^j,2^j+1,...,2^{2j}-1$, $\big|(P^T\text{\normalfont{\textbf{d}}})_i\big| \leq \big\|(P^T \text{\normalfont{\textbf{d}}})^*_{2^j}\big\|_{\ell^2} \leq \frac{\left\|\text{\normalfont{\textbf{d}}}\right\|_{\ell^2}}{2^{j+1}}$, and thus
    \begin{align*}
        \left\| \big( (\Tilde{\Omega}^T P^T \text{\normalfont{\textbf{c}}}) \odot \text{\normalfont{\textbf{b}}} \big)^*_{2^j}\right\|_{\ell^2} &\leq \big\|(\Tilde{\Omega}^T P^T \text{\normalfont{\textbf{c}}})^*_{2^j}\big\|_{\ell^2}\max_{2^j \leq i \leq 2^{2j}-1} \big|(P^T\text{\normalfont{\textbf{d}}})_i\big| \leq \left(1 + \frac{\sqrt{2}}{8}\right) \frac{\left\|\text{\normalfont{\textbf{c}}}\right\|_{\ell^2} \left\|\text{\normalfont{\textbf{d}}}\right\|_{\ell^2}}{2^{\frac{3j}{2}+1}}
    \end{align*}
    Therefore, we can bound $(\Tilde{\Omega}^T P^T \text{\normalfont{\textbf{c}}}) \odot \text{\normalfont{\textbf{b}}}$ by
    \begin{align*}
        \left\| (\Tilde{\Omega}^T P^T \text{\normalfont{\textbf{c}}}) \odot \text{\normalfont{\textbf{b}}} \right\|_{\ell^2}^2 &= 
        \big| \big( (\Tilde{\Omega}^T P^T \text{\normalfont{\textbf{c}}}) \odot \text{\normalfont{\textbf{b}}} \big)_1 \big|^2 + \sum_{j=0}^\infty \left\| \big( (\Tilde{\Omega}^T P^T \text{\normalfont{\textbf{c}}}) \odot \text{\normalfont{\textbf{b}}} \big)^*_{2^j} \right\|_{\ell^2}^2 \\
        &\leq \frac{1}{112}(169+79\sqrt{2}) \left\|\text{\normalfont{\textbf{c}}}\right\|_{\ell^2}^2 \left\|\text{\normalfont{\textbf{d}}}\right\|_{\ell^2}^2 = C_1^2 \left\|\text{\normalfont{\textbf{c}}}\right\|_{\ell^2}^2 \left\|\text{\normalfont{\textbf{d}}}\right\|_{\ell^2}^2.
    \end{align*}
    
    For $\Gamma^T (\text{\normalfont{\textbf{a}}}\odot\text{\normalfont{\textbf{b}}})$, from \eqref{eq:haar_transform_biform_Theta_matrix} we have
    \begin{align*}
        \left|\left( \Gamma^T (\text{\normalfont{\textbf{a}}}\odot\text{\normalfont{\textbf{b}}}) \right)_1\right| &= \left| a_1b_1 + \sum_{r=0}^\infty H_{2^r} \left( \text{\normalfont{\textbf{a}}}^*_{2^r} \odot \text{\normalfont{\textbf{b}}}^*_{2^r} \right) \right| \leq |a_1b_1| + \sum_{r=0}^\infty \|H_{2^r}\|  \left\|\text{\normalfont{\textbf{a}}}^*_{2^r}\right\|_{\ell^2} \max\left|\text{\normalfont{\textbf{b}}}^*_{2^r}\right|  \\
            &\leq \left\|\text{\normalfont{\textbf{c}}}\right\|_{\ell^2} \left\|\text{\normalfont{\textbf{d}}}\right\|_{\ell^2} + \sum_{r=0}^\infty \frac{\left\|\text{\normalfont{\textbf{c}}}\right\|_{\ell^2} \left\|\text{\normalfont{\textbf{d}}}\right\|_{\ell^2}}{2^{\frac{3r}{2}+2}} \leq \frac{18+\sqrt{2}}{14} \left\|\text{\normalfont{\textbf{c}}}\right\|_{\ell^2} \left\|\text{\normalfont{\textbf{d}}}\right\|_{\ell^2} \\
        \left\| \left( \Gamma^T (\text{\normalfont{\textbf{a}}}\odot\text{\normalfont{\textbf{b}}}) \right)^*_{2^j} \right\|_{\ell^2} &= \left\| \sum_{r=j+1}^\infty H_{2^r} \left( \text{\normalfont{\textbf{a}}}^*_{2^r} \odot \text{\normalfont{\textbf{b}}}^*_{2^r} \right) \right\|_{\ell^2} \leq \sum_{r=j+1}^\infty \|H_{2^r}\|  \left\|\text{\normalfont{\textbf{a}}}^*_{2^r}\right\|_{\ell^2} \max\left|\text{\normalfont{\textbf{b}}}^*_{2^r}\right| \\
            &\leq \sum_{r=j+1}^\infty \frac{\left\|\text{\normalfont{\textbf{c}}}\right\|_{\ell^2} \left\|\text{\normalfont{\textbf{d}}}\right\|_{\ell^2}}{2^{\frac{3r}{2}+2}} = \frac{1+2\sqrt{2}}{4} \frac{\left\|\text{\normalfont{\textbf{c}}}\right\|_{\ell^2} \left\|\text{\normalfont{\textbf{d}}}\right\|_{\ell^2}}{2^{\frac{3r}{2}}}
    \end{align*}
    Finally, we bound the sequence $\Gamma^T (\text{\normalfont{\textbf{a}}}\odot\text{\normalfont{\textbf{b}}})$ with
    \begin{align*}
        \left\| \Gamma^T (\text{\normalfont{\textbf{a}}}\odot\text{\normalfont{\textbf{b}}}) \right\|_{\ell^2}^2 &= \left|\left( \Gamma^T (\text{\normalfont{\textbf{a}}}\odot\text{\normalfont{\textbf{b}}}) \right)_1\right|^2 + \sum_{j=0}^\infty \left\| \left( \Gamma^T (\text{\normalfont{\textbf{a}}}\odot\text{\normalfont{\textbf{b}}}) \right)^*_{2^j} \right\|_{\ell^2}^2 \\
        &\leq \frac{1}{49} (113+23\sqrt{2})\left\|\text{\normalfont{\textbf{c}}}\right\|_{\ell^2} \left\|\text{\normalfont{\textbf{d}}}\right\|_{\ell^2} = C_2^2 \left\|\text{\normalfont{\textbf{c}}}\right\|_{\ell^2} \left\|\text{\normalfont{\textbf{d}}}\right\|_{\ell^2}
    \end{align*}
\end{proof}
\subsection{Proofs of quadratic estimates from Section~\ref{sect:nonlinear}}\label{apdx:estimates}

Here we prove the quadratic estimates from Propositions~\ref{prop:estimates_finite_proj_finite_vectors}---\ref{prop:estimates_infinite_proj_infinite_vectors}. As stated in the paper, the main strategy is to employ both the recursive block structures of the matrices from Theorems~\ref{thm:P_recursive_formula} and \ref{thm:biform_haar_transform} and the finite-infinite decomposition from \eqref{eq:finite_infinite_decomposition}. For clarity, Figure~\ref{fig:matrix_structure_overlay} shows how they overlap for the operator $P$; the other matrices follow a similar pattern. We also draw insights from \ref{apdx:biform} to bound the sums that appear in the proof. We believe that similar procedures may be applied for higher-degree polynomial nonlinearities.

\begin{figure}[h!]
    \centering
    \includegraphics[width=0.55\linewidth]{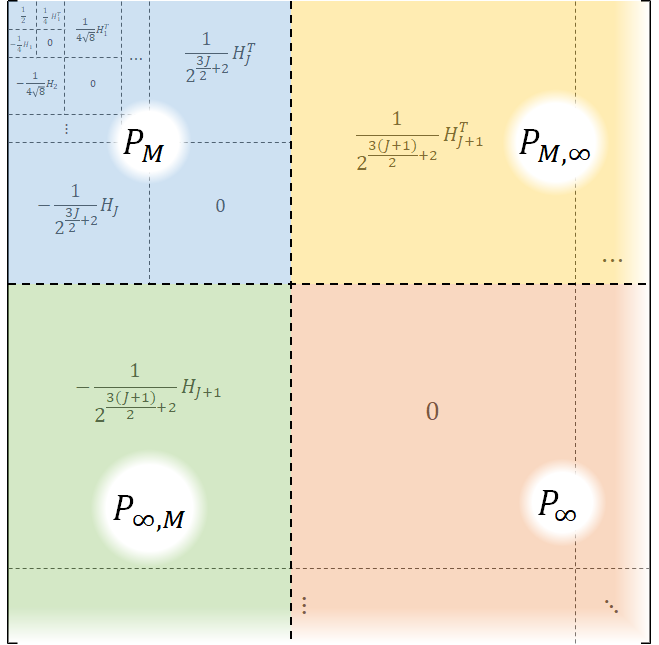}
    \caption{Overlay of the block structure and finite-infinite decomposition for $P$}
    \label{fig:matrix_structure_overlay}
\end{figure}

\subsubsection{Proof of Proposition~\ref{prop:estimates_finite_proj_finite_vectors}}

\begin{enumerate}[wide,label=\textit{\roman*)}]
    \item For $i \leq M$, since $\Bar{\text{\normalfont{\textbf{c}}}}\in\R^M$, the $i$-th element of $\Pi_M P^T \Bar{\text{\normalfont{\textbf{c}}}}$ is given by
    \begin{equation*}
        \left(P^T\Bar{\text{\normalfont{\textbf{c}}}}\right)_i = \sum_{l=1}^M \left(P^T\right)_{i,l}c_l = \left(P_M^T \Bar{\text{\normalfont{\textbf{c}}}}\right)_i.
    \end{equation*}
    
    \item For $i \leq M$,
    \begin{equation*}
        \left(\mathcal{H}(\Bar{\text{\textbf{a}}}^T\Omega(t)\Bar{\text{\textbf{b}}})\right)_i = (\Bar{\text{\normalfont{\textbf{c}}}}^TP\,\Tilde{\Omega})_i
        \Bar{\text{\textbf{b}}}_i = \sum_{l=1}^M c_l(P\,\Tilde{\Omega})_{l,i} \Bar{\text{\textbf{b}}}_i = (\Bar{\text{\normalfont{\textbf{c}}}}^TP_M\Tilde{\Omega}_M)_i \Bar{\text{\textbf{b}}}_i.
    \end{equation*}
    with the last equality due to Lemma~\ref{lem:P_Omega_structure}. Thus, from item \emph{(i)}
    \begin{align*}
        \Pi_M\mathcal{H}(\Bar{\text{\textbf{a}}}^T\Omega(t)\Bar{\text{\textbf{b}}}) &= (\Bar{\text{\normalfont{\textbf{c}}}}^TP_M\Tilde{\Omega}_M)^T \odot \Pi_M\Bar{\text{\textbf{b}}} = (\Tilde{\Omega}_M^TP_M^T\Bar{\text{\normalfont{\textbf{c}}}}) \odot (P_M^T\Bar{\text{\normalfont{\textbf{d}}}}).
    \end{align*}
    
    \item Using the finite-infinite decomposition, we can separate $\Pi_M\mathcal{H}(\Bar{\text{\textbf{a}}}^T\Theta(t)\Bar{\text{\textbf{b}}})$ as
    \begin{equation*}
        \Pi_M\mathcal{H}(\Bar{\text{\textbf{a}}}^T\Theta(t)\Bar{\text{\textbf{b}}}) = \Gamma^T_M (\Bar{\text{\textbf{a}}}_M \odot \Bar{\text{\textbf{b}}}_M) + \Gamma^T_{\infty,M} (\Bar{\text{\textbf{a}}}_\infty \odot \Bar{\text{\textbf{b}}}_\infty)
    \end{equation*}
    We now need to bound the infinite sum in the second term. Adapting the expression of $\Bar{\text{\textbf{b}}} = P^T\Bar{\text{\normalfont{\textbf{d}}}}$ in the proof of Proposition~\ref{prop:PTc_etimates} for $2^j+l > M$ and $\Bar{\text{\normalfont{\textbf{d}}}} \in \R^M$,
    \begin{equation*}
        \Bar{b}_{2^j+l} = -\frac{1}{2^{\frac{3j}{2} + 2}} \sum_{q=1}^M \left(H_{2^j}^T\right)_{l,q}\Bar{c}_q,
    \end{equation*}
    and thus, using the structure of $\Gamma$, \eqref{eq:PTc_block_estimate_finite_vector} and the fact that $H_{2^j}H_{2^j}^T = 2^j I_{2^j}$,
    \begin{align*}
        \left|\left(\Gamma^T_\infty (\Pi_\infty\Bar{\text{\textbf{a}}} \odot \Pi_\infty\Bar{\text{\textbf{b}}})\right)_i\right| &= \left| \sum_{j=J+1}^\infty \sum_{l=1}^{2^j} \left(H_{2^j}\right)_{i,l} \Bar{a}_{2^j+l} \Bar{b}_{2^j+l} \right| \\
        &\hspace{-2em}\leq \sum_{j=J+1}^\infty \left| \sum_{l=1}^{2^j} -\left(H_{2^j}\right)_{i,l} \left(\frac{1}{2^{\frac{3j}{2} + 2}} \sum_{q=1}^M \left(H_{2^j}^T\right)_{l,q}c_q\right) \Bar{b}_{2^j+l} \right| \\
        &\hspace{-2em}\leq \sum_{j=J+1}^\infty \frac{1}{2^{\frac{3j}{2}+2}} \left|\left( \sum_{q=1}^M \sum_{l=1}^{2^j} \left(H_{2^j}\right)_{i,l} \left(H_{2^j}^T\right)_{l,q}c_q\right)\right| \frac{\|\Bar{\text{\normalfont{\textbf{d}}}}\|_{\ell^2}}{2^{j+1}} = \frac{\sqrt{2}\,\|\Bar{\text{\normalfont{\textbf{d}}}}\|_{\ell^2}}{\left(4-\sqrt{2}\right)2^{\frac{3J}{2}+3}} |c_i|
    \end{align*}
    and thus
    \begin{equation*}
        \|\Gamma^T_\infty (\Pi_\infty\Bar{\text{\textbf{a}}} \odot \Pi_\infty\Bar{\text{\textbf{b}}})\|_{\ell^2} \leq \frac{\sqrt{2}\, \|\Bar{\text{\normalfont{\textbf{c}}}}\|_{\ell^2} \|\Bar{\text{\normalfont{\textbf{d}}}}\|_{\ell^2}}{\left(4-\sqrt{2}\right)2^{\frac{3J}{2}+3}}.
    \end{equation*}
\end{enumerate}

\subsubsection{Proof of Proposition~\ref{prop:estimates_infinite_proj_finite_vectors}}

\begin{enumerate}[wide, label=\textit{\roman*)}]    
    \item Using the finite-infinite decomposition, the recursive block structure of $P$ and the fact that $\Bar{\text{\textbf{c}}} \in \R^M$, we have for $j > J$
    \begin{equation}\label{eq:PTc_block_estimate_finite_vector}
        \left\|\left(P^T\Bar{\text{\normalfont{\textbf{c}}}}\right)_{2^j}^* \right\|_{\ell^2} = \left\| -\frac{1}{2^{\frac{3j}{2}+2}} H_{2^j}^T \Bar{\text{\normalfont{\textbf{c}}}} \right\|_{\ell^2} \leq \frac{1}{2^{\frac{3j}{2}+2}} \left\|H_{2^j}^T\right\|\left\|\Bar{\text{\normalfont{\textbf{c}}}}\right\|_{\ell^2}
        = \frac{\left\|\Bar{\text{\normalfont{\textbf{c}}}}\right\|_{\ell^2}}{2^{j+2}}
    \end{equation}
    Estimating as we did in Theorem~\ref{thm:Haar_transf_biform_bounded},
    \begin{equation*}
        \left\|\Pi_\infty\left(P^T\Bar{\text{\normalfont{\textbf{c}}}}\right)\right\|_{\ell^2} = \sqrt{\sum_{j=J+1}^\infty \left\|\left(P^T\Bar{\text{\normalfont{\textbf{c}}}}\right)_{2^j}^*\right\|_{\ell^2}^2} \leq \frac{1}{\sqrt{3}} \frac{\left\|\Bar{\text{\normalfont{\textbf{c}}}}\right\|_{\ell^2}}{2^{J+2}}.
    \end{equation*}
    
    \item For $j > J$, dividing $\Pi_\infty\mathcal{H}(\Bar{\text{\textbf{a}}}^T\Omega(t)\Bar{\text{\textbf{b}}})$ in blocks, 
    \begin{align*}
        \left\|\left( \mathcal{H}(\Bar{\text{\textbf{a}}}^T\Omega(t)\Bar{\text{\textbf{b}}})\right)_{2^j}^* \right\|_{\ell^2} & = \left\| -\frac{1}{2^{\frac{3j}{2}+2}} \left(H_{2^j}^TP_{2^j}^T\Bar{\text{\normalfont{\textbf{c}}}}\right) \odot \left(H_{2^j}^T\Bar{\text{\normalfont{\textbf{d}}}}\right) \right\|_{\ell^2} \leq \frac{1}{2^{\frac{3j}{2}+2}} \left(\max_{1 \leq k \leq 2^j}\left|\left( H_{2^j}^T\Bar{\text{\normalfont{\textbf{c}}}} \right)_k\right|\right) \left\| H_{2^j}^TP_{2^j}^T\Bar{\text{\normalfont{\textbf{d}}}} \right\|_{\ell^2} \\
        &\leq \frac{1}{2^{\frac{3j}{2}+2}} \left\| H_{2^j}^T\Bar{\text{\normalfont{\textbf{c}}}} \right\|_{\ell^2} \left\|H_{2^j}^TP_{2^j}^T\Bar{\text{\normalfont{\textbf{d}}}} \right\|_{\ell^2} \leq  \frac{\left\|\Bar{\text{\normalfont{\textbf{c}}}}\right\|_{\ell^2} \left\|\Bar{\text{\normalfont{\textbf{d}}}}\right\|_{\ell^2}}{2^{j+4}}
    \end{align*}
    and thus
    \begin{align*}
        \left\| \Pi_\infty \left( \mathcal{H}(\Bar{\text{\textbf{a}}}^T\Omega(t)\Bar{\text{\textbf{b}}})\right) \right\|_{\ell^2} &= \sqrt{\sum_{j=J+1}^\infty \left\|\left( \mathcal{H}(\Bar{\text{\textbf{a}}}^T\Omega(t)\Bar{\text{\textbf{b}}})\right)_{2^j}^* \right\|_{\ell^2}^2} \leq \frac{1}{\sqrt{3}} \frac{\left\|\Bar{\text{\normalfont{\textbf{c}}}}\right\|_{\ell^2} \left\|\Bar{\text{\normalfont{\textbf{d}}}}\right\|_{\ell^2}}{2^{2J+4}}.
    \end{align*}
    
    \item To estimate $\Pi_\infty\mathcal{H}(\Bar{\text{\textbf{a}}}^T\Theta(t)\Bar{\text{\textbf{a}}})$, with the block matrix structure from \eqref{eq:haar_transform_biform_Theta_matrix} for $i > M$, 
    \begin{align*}
        \left|\left(\mathcal{H}(\Bar{\text{\textbf{a}}}^T\Theta(t)\Bar{\text{\textbf{b}}})\right)_i\right| &= \left| \Gamma^T_{i,*} (\Bar{\text{\textbf{a}}} \odot \Bar{\text{\textbf{b}}})  \right| = \left| \sum_{p=j+1}^\infty \frac{1}{2^{3p+4}} \sum_{q=1}^{2^p} (H_{2^p})_{i,q} \left((H^T_{2^p})_{q,*}\Bar{\text{\normalfont{\textbf{c}}}}\right) \left((H^T_{2^p})_{q,*}\Bar{\text{\normalfont{\textbf{d}}}}\right) \right| \\
        &\leq \sum_{p=j+1}^\infty \frac{1}{2^{3p+4}} \max_{1 \leq q \leq 2^p}\left| \left((H^T_{2^p})_{q,*}\Bar{\text{\normalfont{\textbf{d}}}}\right) \right| \sum_{q=1}^{2^p} \left|(H_{2^p})_{i,q} \left((H^T_{2^p})_{q,*}\Bar{\text{\normalfont{\textbf{c}}}}\right)\right| \\
        &\leq \sum_{p=j+1}^\infty \left|(I_{2^p})_{i,*}\Bar{\text{\normalfont{\textbf{c}}}}\right| \frac{\|\Bar{\text{\normalfont{\textbf{d}}}}\|_{\ell^2}}{2^{3p+6}}= \frac{|c_i|}{2^{3j+6}} \frac{\|\Bar{\text{\normalfont{\textbf{d}}}}\|_{\ell^2}}{7}
    \end{align*}
    and hence
    \begin{equation*}
        \left\|\Pi_\infty \mathcal{H} (\Bar{\text{\textbf{a}}}^T \Theta(t) \Bar{\text{\textbf{b}}})\right\|_{\ell^2} \leq \frac{1}{21\sqrt{7}} \frac{\|\Bar{\text{\normalfont{\textbf{c}}}}\|_{\ell^2} \|\Bar{\text{\normalfont{\textbf{d}}}}\|_{\ell^2}}{2^{3J+6}}.
    \end{equation*}
\end{enumerate}

\subsubsection{Proof of Proposition~\ref{prop:estimates_finite_proj_infinite_vectors}}

The following Lemma helps estimating terms of the type $A_M(B_M\Bar{\text{\normalfont{\textbf{x}}}} \odot C_M\Bar{\text{\normalfont{\textbf{y}}}})$ for arbitrary $\Bar{\text{\normalfont{\textbf{x}}}},\Bar{\text{\normalfont{\textbf{y}}}} \in \R^M$, which would be tricky otherwise.
\begin{lemma}\label{lem:estimate_product_arbitrary_finite_vectors}
    Given $\Bar{\text{\normalfont{\textbf{x}}}},\Bar{\text{\normalfont{\textbf{y}}}} \in \R^M$ and $M \times M$ matrices $A_M,B_M,C_M$, then
    \begin{equation}\label{eq:estimate_odot_finite}
        \left\|A_M(B_M\Bar{\text{\normalfont{\textbf{x}}}} \odot C_M\Bar{\text{\normalfont{\textbf{y}}}})\right\|_{\ell^2} \leq \left\| A_M\,\text{\normalfont{diag}}(\|(C_M)_{l,*}\|_{\ell^2})B_M \right\| \|\Bar{\text{\normalfont{\textbf{x}}}}\|_{\ell^2} \|\Bar{\text{\normalfont{\textbf{y}}}}\|_{\ell^2},
    \end{equation}
    where
    \begin{equation*}
        \text{\normalfont{diag}}(\Bar{\text{\normalfont{\textbf{x}}}}) := \left[
        \begin{array}{cccc}
            x_1 & \multicolumn{1}{c}{\phantom{00}} &      
                \multicolumn{2}{c}{\multirow{2}{*}{$0$}} \\[0.5ex]
            \multicolumn{1}{c}{\phantom{00}} & x_2 &
                \multicolumn{1}{c}{\phantom{00}} & \multicolumn{1}{c}{\phantom{00}}\\[0.5ex]
            \multicolumn{2}{c}{\multirow{2}{*}{$0$}} & \ddots & \multicolumn{1}{c}{\phantom{00}} \\
            \multicolumn{1}{c}{\phantom{00}} &
                \multicolumn{1}{c}{\phantom{00}} & \multicolumn{1}{c}{\phantom{00}} & x_n 
        \end{array}
        \right] \quad,\quad
        \left(\|(C_M)_{l,*}\|_{\ell^2}\right) = 
        \begin{pmatrix}
            \|(C_M)_{1,*}\|_{\ell^2} \\
            \|(C_M)_{2,*}\|_{\ell^2} \\
            \vdots \\
            \|(C_M)_{M,*}\|_{\ell^2} \\
        \end{pmatrix}.
    \end{equation*}
\end{lemma}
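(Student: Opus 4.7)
The plan is to recast the Hadamard product as a matrix action and then apply Cauchy--Schwarz row-wise. Using the identity $\mathbf{u}\odot\mathbf{v} = \text{diag}(\mathbf{v})\mathbf{u}$, I would first rewrite
\[
A_M(B_M\Bar{\text{\textbf{x}}} \odot C_M\Bar{\text{\textbf{y}}}) = A_M\,\text{diag}(C_M\Bar{\text{\textbf{y}}})\,B_M\,\Bar{\text{\textbf{x}}},
\]
so that sub-multiplicativity of the operator norm gives
\[
\left\|A_M(B_M\Bar{\text{\textbf{x}}} \odot C_M\Bar{\text{\textbf{y}}})\right\|_{\ell^2} \leq \left\|A_M\,\text{diag}(C_M\Bar{\text{\textbf{y}}})\,B_M\right\|\,\|\Bar{\text{\textbf{x}}}\|_{\ell^2}.
\]
The core of the proof is then to show
\[
\left\|A_M\,\text{diag}(C_M\Bar{\text{\textbf{y}}})\,B_M\right\| \leq \|\Bar{\text{\textbf{y}}}\|_{\ell^2}\,\left\|A_M\,D\,B_M\right\|,
\]
where $D := \text{diag}(\|(C_M)_{l,*}\|_{\ell^2})$.

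The key input is the row-wise Cauchy--Schwarz inequality
\[
\left|(C_M\Bar{\text{\textbf{y}}})_l\right| = \left|(C_M)_{l,*}\Bar{\text{\textbf{y}}}\right| \leq \left\|(C_M)_{l,*}\right\|_{\ell^2}\,\|\Bar{\text{\textbf{y}}}\|_{\ell^2} = D_{l,l}\,\|\Bar{\text{\textbf{y}}}\|_{\ell^2},
\]
which expresses that $\text{diag}(C_M\Bar{\text{\textbf{y}}}) = \|\Bar{\text{\textbf{y}}}\|_{\ell^2}\,S\,D$, where $S = \text{diag}(s_l)$ with $|s_l|\leq 1$. The factor $S$ has operator norm at most $1$; intuitively, absorbing it into the sandwich $A_M\,S\,D\,B_M$ should yield the bound $\|A_M\,D\,B_M\|\,\|\Bar{\text{\textbf{y}}}\|_{\ell^2}$.

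The main obstacle is precisely this last step: in general the operator norm of $A_M\,\Lambda\,B_M$ for a signed diagonal $\Lambda$ is \emph{not} controlled by $\|A_M\,|\Lambda|\,B_M\|$, because $\|S\|\leq 1$ does not let us pull $S$ out of the sandwich freely. The clean way to handle this is to expand componentwise before taking norms: writing
\[
\left(A_M\,\text{diag}(C_M\Bar{\text{\textbf{y}}})\,B_M\,\Bar{\text{\textbf{x}}}\right)_i = \sum_l (A_M)_{i,l}\,(B_M\Bar{\text{\textbf{x}}})_l\,\left((C_M)_{l,*}\Bar{\text{\textbf{y}}}\right),
\]
applying the row-wise Cauchy--Schwarz bound to extract $\|\Bar{\text{\textbf{y}}}\|_{\ell^2}$, and then reassembling the remaining sum as the $i$-th component of $A_M\,D\,B_M\,\Bar{\text{\textbf{x}}}$. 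Squaring, summing over $i$, and using $\|A_M\,D\,B_M\,\Bar{\text{\textbf{x}}}\|_{\ell^2}\leq \|A_M\,D\,B_M\|\,\|\Bar{\text{\textbf{x}}}\|_{\ell^2}$ yields the stated estimate. Care with signs in this componentwise reassembly is the subtle point that must be treated carefully.
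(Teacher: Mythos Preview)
Your componentwise approach in the final paragraph is precisely what the paper does: expand the $i$-th entry as $\sum_l (A_M)_{i,l}(B_M\Bar{\text{\textbf{x}}})_l(C_M\Bar{\text{\textbf{y}}})_l$, replace $(C_M\Bar{\text{\textbf{y}}})_l$ by $\|(C_M)_{l,*}\|_{\ell^2}\|\Bar{\text{\textbf{y}}}\|_{\ell^2}$, recognize the remaining sum as $(A_M D B_M\Bar{\text{\textbf{x}}})_i$, and then take the $\ell^2$ norm.

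The sign concern you flag is exactly the gap in the paper's argument as well. The paper passes from
\[
\left|\sum_{l} (A_M)_{i,l}(B_M\Bar{\text{\textbf{x}}})_l\,(C_M\Bar{\text{\textbf{y}}})_l\right|
\quad\text{to}\quad
\left|\sum_{l} (A_M)_{i,l}(B_M\Bar{\text{\textbf{x}}})_l\,\|(C_M)_{l,*}\|_{\ell^2}\right|\,\|\Bar{\text{\textbf{y}}}\|_{\ell^2}
\]
by substituting a nonnegative upper bound for each $(C_M\Bar{\text{\textbf{y}}})_l$ inside an absolute value of a signed sum, which is not generally valid. In fact the stated inequality is false for arbitrary matrices: with $M=2$, $C_M=I$, $A_M=\begin{pmatrix}1&1\\0&0\end{pmatrix}$, $B_M=\begin{pmatrix}1&0\\-1&0\end{pmatrix}$ one has $D=I$ and $A_M D B_M=0$, so the right-hand side vanishes, yet $\Bar{\text{\textbf{x}}}=(1,0)^T$ and $\Bar{\text{\textbf{y}}}=(1,-1)^T$ give left-hand side equal to $2$. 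So your caution is warranted: neither your reassembly nor the paper's can close this step as written. A correct version would need either additional sign hypotheses or a coarser right-hand side, for instance $\big\|\,|A_M|\,D\,|B_M|\,\big\|\,\|\Bar{\text{\textbf{x}}}\|_{\ell^2}\|\Bar{\text{\textbf{y}}}\|_{\ell^2}$ with entrywise absolute values, obtained by moving the absolute value inside the sum before bounding.
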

\begin{proof}
    For each element of $A_M(B_M\Bar{\text{\normalfont{\textbf{x}}}} \odot C_M\Bar{\text{\normalfont{\textbf{y}}}})$
    \begin{align*}
        \left|\left(A_M(B_M\Bar{\text{\normalfont{\textbf{x}}}} \odot C_M\Bar{\text{\normalfont{\textbf{y}}}})\right)_i\right| &= \left| \sum_{l=1}^M (A_M)_{i,l}(B_M\Bar{\text{\normalfont{\textbf{x}}}})_l (C_M\Bar{\text{\normalfont{\textbf{y}}}})_l\right| \\
        &\hspace{-4em}\leq \left| \sum_{l=1}^M (A_M)_{i,l}(B_M\Bar{\text{\normalfont{\textbf{x}}}})_l \|(C_M)_{l,*}\|_{\ell^2}\|\Bar{\text{\normalfont{\textbf{y}}}}\|_{\ell^2}\right| = \left| (A_M)_{i,*}\text{diag}\left(\|(C_M)_{l,*}\|_{\ell^2}\right)B_M\Bar{\text{\normalfont{\textbf{x}}}}  \right| \|\Bar{\text{\normalfont{\textbf{y}}}}\|_{\ell^2}.
    \end{align*}
    Taking the $\ell^2$ norm, the result follows.
\end{proof}

\begin{enumerate}[wide,label=\roman*)]
    \item Applying the finite-infinite decomposition to $P$ and observing its block structure as in Figure~\ref{fig:matrix_structure_overlay}, we have that
    \begin{align}\label{eq:PTy_inf_estimate}
        \left\|(P_{\infty,M}^T\,\text{\textbf{y}}_\infty)\right\|_{\ell^2} &= \left\| \sum_{q=J+1}^\infty \frac{1}{2^{\frac{3q}{2}+2}}\left(H_{2^q}\right)_{1:M,*} \text{\textbf{y}}_{2^q}^* \right\|_{\ell^2} \leq \sum_{q=J+1}^\infty \frac{1}{2^{\frac{3q}{2}+2}} \left\|\left(H_{2^q}\right)_{1:M,*}\right\| \left\|\text{\textbf{y}}_{2^q}^*\right\|_{\ell^2}  \notag \\
        &\leq \sum_{q=J+1}^\infty \frac{1}{2^{\frac{3q}{2}+2}} \left\|H_{2^q}\right\| \left\|\text{\textbf{y}}_{2^q}^*\right\|_{\ell^2} \leq \frac{\left\|\text{\textbf{y}}\right\|_{\ell^2}}{2^{J+2}}.
    \end{align}

    \item Fix $j > J$ and make $\text{\textbf{z}}_{2^j} := H_{2^j} \text{\textbf{y}}_{2^j}^*$. From Lemma~\ref{lem:P_Omega_structure},
    \begin{align*}
        \left( \Tilde{\Omega}^TP^T \right)_{M,\infty} \text{\textbf{y}}_\infty &= \sum_{j=J+1}^\infty \frac{1}{2^{\frac{3j}{2}+2}} \left(\Tilde{\Omega}_{2^j}^T H_{2^j}\right)_{1:M,*} \text{\textbf{y}}_{2^j}^* \\
        &= \sum_{j=J+1}^\infty \frac{1}{2^{\frac{3j}{2}+2}} \left(\Tilde{\Omega}_{2^j}^T\right)_{1:M,*} H_{2^j} \text{\textbf{y}}_{2^j}^* = \sum_{j=J+1}^\infty \frac{1}{2^{\frac{3j}{2}+2}} \left(\Tilde{\Omega}_{2^j}^T\right)_{1:M,*} \text{\textbf{z}}_{2^j}
    \end{align*}
    Observe that, for $p = 0,1,...,J$,
    \begin{alignat*}{4}
        \left( \Tilde{\Omega}_{2^j}^T\text{\textbf{z}}_{2^j} \right)_1 &= 0 \quad,\quad 
        \left\| \left( \Tilde{\Omega}_{2^j}^T\text{\textbf{z}}_{2^j} \right)_{2^p}^* \right\|_{\ell^2} &= \left\| H_{2^p}^T (z_1,...,z_{2^p})^T \right\|_{\ell^2}
        \leq 2^\frac{p}{2} \left\| \text{\textbf{z}}_{2^j} \right\|_{\ell^2}
    \end{alignat*}
    and therefore, for each $j > J$,
    \begin{align*}
        \left\| \Tilde{\Omega}_{2^j}^T\text{\textbf{z}}_{2^j} \right\|_{\ell^2} &= \sqrt{ \sum_{p=0}^{j} \left\| \left( \Tilde{\Omega}_{2^j}^T\text{\textbf{z}}_{2^j} \right)_{2^p}^* \right\|_{\ell^2}^2} \leq 2^\frac{j+1}{2} \left\| \text{\textbf{z}}_{2^j} \right\|_{\ell^2} \leq 2^{j+\frac{1}{2}} \left\| \text{\textbf{y}}_{2^j}^* \right\|_{\ell^2}.
    \end{align*}
    Hence, we can bound the norm of $\left(\Tilde{\Omega}^TP^T\right)_{M,\infty}\text{\textbf{y}}_\infty$ with
    \begin{align*}
        \left\| \left(\Tilde{\Omega}^TP^T\right)_{M,\infty}\text{\textbf{y}}_\infty \right\|_{\ell^2} &\leq \sum_{j=J+1}^\infty \frac{1}{2^{\frac{3j}{2}+2}} \left\| \left(\Tilde{\Omega}_{2^j}^T\right)_{1:M,*} \text{\textbf{z}}_{2^j} \right\|_{\ell^2} \\
        &\leq \sum_{j=J+1}^\infty \frac{1}{2^\frac{j+3}{2}} \left\| \text{\textbf{y}}_{2^j}^* \right\|_{\ell^2} \leq \left(1+\sqrt{2}\right) \frac{\left\|\text{\textbf{y}}\right\|_{\ell^2}}{2^\frac{J+3}{2}}
    \end{align*}

    \item From the block structure of \eqref{eq:haar_transform_biform_Theta_matrix}, and using \eqref{eq:PTc_block_estimate_infinite_vector} and \eqref{eq:PTc_block_estimate_finite_vector},
    \begin{align*}
        \left\| \Gamma_{\infty,M}^T(\Bar{\text{\textbf{a}}}_\infty \odot \Pi_\infty P^T\text{\textbf{y}}) \right\|_{\ell^2} &= \left\| \sum_{q=J+1}^\infty  (H_{2^q})_{1:M,*} \left( \Bar{\text{\textbf{a}}}^*_{2^q} \odot (P^T\text{\textbf{y}})^*_{2^q} \right) \right\|_{\ell^2} \\
        &\leq \sum_{q=J+1}^\infty \left\|(H_{2^q})\right\| \left( \max_{1 \leq r \leq 2^q} \left|\left(\Bar{\text{\textbf{a}}}^*_{2^q}\right)_r\right| \right) \left\|(P^T\text{\textbf{y}})^*_{2^q}\right\|_{\ell^2} \\
        &\leq \sum_{q=J+1}^\infty 2^{\frac{q}{2}} \frac{\left\|\Bar{\text{\textbf{c}}}\right\|_{\ell^2}}{2^{q+2}} \frac{\left\|\text{\textbf{y}}\right\|_{\ell^2}}{2^{q+1}} = \frac{\sqrt{2}}{4-\sqrt{2}} \frac{\left\|\Bar{\text{\textbf{c}}}\right\|_{\ell^2} \left\|\text{\textbf{y}}\right\|_{\ell^2}}{2^{\frac{3J}{2}+3}}
    \end{align*}

    \item Applying the finite-infinite decomposition to $A_M\Pi_M\mathcal{H}(\text{\textbf{x}}^T P\,\Omega(t)P^T \text{\textbf{y}})$, we have
    \begin{align*}
        \Pi_M\mathcal{H}(\text{\textbf{x}}^T P\,\Omega(t)P^T \text{\textbf{y}}) &= \big( \text{\textbf{x}}_M^T P_M\Tilde{\Omega}_M \big)^T \odot \big( P_M^T\text{\textbf{y}}_M \big) + \left( \text{\textbf{x}}_M^T P_M\Tilde{\Omega}_M \right)^T \odot \left( P_{\infty,M}^T \text{\textbf{y}}_\infty \right) \\
        &\hspace{2em} + \big( \text{\textbf{x}}_\infty^T (P\,\Tilde{\Omega})_{M,\infty} \big)^T \odot \left( P_M^T\text{\textbf{y}}_M \right) + \big( \text{\textbf{x}}_\infty^T (P\,\Tilde{\Omega})_{M,\infty} \big)^T \odot \left( P_{\infty,M}^T \text{\textbf{y}}_\infty \right).
    \end{align*}
    Using \eqref{eq:estimate_odot_finite}, we can bound the first term with Lemma~\ref{lem:estimate_product_arbitrary_finite_vectors}:
    \begin{align*}
        &\left\|A_M\left( \left(\text{\textbf{x}}_M^T P_M\Tilde{\Omega}_M\right)^T \odot \left(P_M^T\text{\textbf{y}}_M\right) \right) \right\|_{\ell^2} \leq \left\| A_M\, \text{diag}\left(\|(P_M^T)_{i,*}\|_{\ell^2}\right)\, \Tilde{\Omega}_M^TP_M^T \right\| \  \|\text{\textbf{x}}\|_{\ell^2} \|\text{\textbf{y}}\|_{\ell^2}.
    \end{align*}
    For the second term, using the bound from \eqref{eq:PTy_inf_estimate},
    \begin{align*}
        \left|\left( A_M \left( \left( \text{\textbf{x}}_M^T P_M\Tilde{\Omega}_M \right)^T \odot \left( P_{\infty,M}^T \text{\textbf{y}}_\infty \right) \right) \right)_i\right| &= \left|\sum_{l=1}^M\left( A_M \right)_{i,l}\left(\text{\textbf{x}}_M^T P_M\Tilde{\Omega}_M \right)_l \left( P_{\infty,M}^T \text{\textbf{y}}_\infty  \right)_l\right| \\
        &\hspace{-6em}\leq \left|\sum_{l=1}^M\left( A_M \right)_{i,l}\left(\text{\textbf{x}}_M^T P_M\Tilde{\Omega}_M \right)_l \right| \frac{\|\text{\textbf{y}}\|_{\ell^2}}{2^{J+2}} = \left|\left(A_M\Tilde{\Omega}_M^T P_M^T \text{\textbf{x}}_M\right)_i \right| \frac{\|\text{\textbf{y}}\|_{\ell^2}}{2^{J+2}}
    \end{align*}
    and thus
    \begin{equation*}
        \left\|A_M\left( \left( \text{\textbf{x}}_M^T P_M\Tilde{\Omega}_M \right)^T \odot \left( P_{\infty,M}^T \text{\textbf{y}}_\infty \right) \right)\right\|_{\ell^2} \leq  \frac{\left\| A_M\Tilde{\Omega}_M^T P_M^T \right\|}{2^{J+2}} \|\text{\textbf{x}}\|_{\ell^2} \|\text{\textbf{y}}\|_{\ell^2}.
    \end{equation*}
    For the third term, with a similar method to the previous term and the estimate for $\text{\textbf{x}}_\infty^T (P\,\Tilde{\Omega})_{M,\infty}$,
    \begin{align*}
        \left|\left( A_M\left( \left(\text{\textbf{x}}_\infty^T(P\,\Tilde{\Omega})_{M,\infty}\right)^T \odot \left(P_M^T\text{\textbf{y}}_M\right) \right) \right)_i\right| &= \left| \sum_{q=1}^M (A_M)_{i,q} \left(P_M^T\text{\textbf{y}}_M\right)_q \left(\text{\textbf{x}}_\infty^T(P\,\Tilde{\Omega})_{M,\infty}\right)_q \right| \\
        &\hspace{-9em}\leq \left(\max_{1 \leq q \leq M} \left|\left( \text{\textbf{x}}_\infty^T(P\,\Tilde{\Omega})_{M,\infty} \right)_q\right|\right) \left|\sum_{q=1}^M (A_M)_{i,q} \left(P_M^T\text{\textbf{y}}_M\right)_q\right|  \\
        &\hspace{-9em}\leq \left\|\text{\textbf{x}}_\infty^T (P\,\Tilde{\Omega})_{M,\infty}\right\|_{\ell^2} \left|\left( A_MP_M^T\text{\textbf{y}}_M \right)_i\right| \leq \left(1+\sqrt{2}\right) \frac{\left\|\text{\textbf{x}}\right\|_{\ell^2}}{2^\frac{J+3}{2}} \left|\left( A_MP_M^T\text{\textbf{y}}_M \right)_i\right|
    \end{align*}
    and thus
    \begin{align*}
        \left\|A_M \left( \left( \text{\textbf{x}}_\infty^T (P\,\Tilde{\Omega})_{M,\infty} \right)^T \odot \left( P_M^T\text{\textbf{y}}_M \right) \right)\right\|_{\ell^2} &\leq \left(1+\sqrt{2}\right) \frac{\left\|\text{\textbf{x}}\right\|_{\ell^2}}{2^\frac{J+3}{2}} \left\|A_MP_M^T\text{\textbf{y}}_M\right\|_{\ell^2} \\
        &\leq \left(1+\sqrt{2}\right) \left\|A_MP_M^T\right\| \frac{\left\|\text{\textbf{x}}\right\|_{\ell^2} \left\|\text{\textbf{y}}\right\|_{\ell^2}}{2^\frac{J+3}{2}}
    \end{align*}
    For the fourth term, with the estimates for $\text{\textbf{x}}_\infty^T \left((P\,\Tilde{\Omega})_{M,\infty}\right)$ and $P_{\infty,M}^T \text{\textbf{y}}_\infty$, we obtain
    \begin{align*}
        \left\|A_M \left( \left( \text{\textbf{x}}_\infty^T (P\,\Tilde{\Omega})_{M,\infty} \right)^T \odot \left( P_{\infty,M}^T \text{\textbf{y}}_\infty  \right) \right)\right\|_{\ell^2} &= \left\| A_M \, \text{diag}\left(P_{\infty,M}^T \text{\textbf{y}}_\infty\right) \left(\text{\textbf{x}}_\infty^T (P\,\Tilde{\Omega})_{M,\infty}\right)^T \right\|_{\ell^2} \\ 
        &\hspace{-9em}\leq \left\|A_M\right\| \left\|P_{\infty,M}^T \text{\textbf{y}}_\infty \right\|_{\ell^2} \left\|\text{\textbf{x}}_\infty^T(P\,\Tilde{\Omega})_{M,\infty}\right\|_{\ell^2}\leq \left\|A_M\right\| \left(1+\sqrt{2}\right) \frac{\left\|\text{\textbf{x}}\right\|_{\ell^2} \left\|\text{\textbf{y}}\right\|_{\ell^2}}{2^\frac{3J+7}{2}}
    \end{align*}
    Finally, adding the four bounds, we obtain the desired bound.
    
    \item Applying the finite-infinite decomposition to both $P^T$ and $\Gamma^T$, we have
    \begin{align*}
        A_M\Pi_M\mathcal{H}(\text{\textbf{x}}^T P\Theta(t)P^T \text{\textbf{y}}) &= A_M\Gamma_M^T \Big[ \left(P_M^T\text{\textbf{x}}_M\right) \odot \left(P_M^T\text{\textbf{y}}_M\right) + \left(P_M^T\text{\textbf{x}}_M\right) \odot \left(P_{\infty,M}^T\text{\textbf{y}}_\infty\right) \\
        &\hspace{3em} + \left(P_{\infty,M}^T\text{\textbf{x}}_\infty\right) \odot \left(P_M^T\text{\textbf{y}}_M\right) + \left(P_{\infty,M}^T\text{\textbf{x}}_\infty\right) \odot \left(P_{\infty,M}^T \text{\textbf{y}}_\infty\right) \Big] \\
        &\hspace{3em}+ A_M\Gamma_{\infty,M}^T\Pi_\infty \left( P^T \text{\textbf{x}} \odot P^T \text{\textbf{y}} \right).
    \end{align*}
    We estimate it term by term again. For the first one, using Lemma~\ref{lem:estimate_product_arbitrary_finite_vectors},
    \begin{equation*}
        \begin{aligned}
            \left\| A_M\Gamma_M^T \left(\left(P_M^T\text{\textbf{x}}_M\right) \odot \left(P_M^T\text{\textbf{y}}_M\right)\right) \right\|_{\ell^2} &\leq \left\| A_M\Gamma_M^T\text{diag}(\|(P_M^T)_{i,*}\|_{\ell^2})P_M^T \right\| \|\text{\textbf{x}}\|_{\ell^2} \|\text{\textbf{y}}\|_{\ell^2}.
        \end{aligned}
    \end{equation*}
    For the second term, we can use \eqref{eq:PTy_inf_estimate} to bound it element-wise by
    \begin{align*}
        \left| \Big(A_M\Gamma_M^T \left(\left(P_M^T\text{\textbf{x}}_M\right) \odot \left(P_{\infty,M}^T\text{\textbf{y}}_\infty\right) \right) \Big)_i \right| &= \left|\sum_{l=1}^M (A_M\Gamma_M^T)_{i,l} \left(P_M^T\text{\textbf{x}}_M\right)_l \left(P_{\infty,M}^T\text{\textbf{y}}_\infty\right)_l \right| \\
        &\hspace{-6em}\leq \left|\sum_{l=1}^M (A_M\Gamma_M^T)_{i,l} \left(P_M^T\text{\textbf{x}}_M\right)_l \right| \frac{1}{2^{J+2}} \left\|\text{\textbf{y}}\right\|_{\ell^2} = \frac{\left| (A_M \Gamma_M^T P_M^T\text{\textbf{x}}_M)_i \right|}{2^{J+2}} \left\|\text{\textbf{y}}\right\|_{\ell^2}
    \end{align*}
    and thus
    \begin{align*}
        \left\| A_M\Gamma_M^T \left( \left(P_M^T\text{\textbf{x}}_M\right) \odot \left(P_{\infty,M}^T\text{\textbf{y}}_\infty\right) \right)\right\|_{\ell^2} &= \left( \sum_{i=1}^M \left|\Big(A_M\Gamma_M^T \left(\left(P_M^T\text{\textbf{x}}_M\right) \odot \left(P_{\infty,M}^T\text{\textbf{y}}_\infty\right) \right) \Big)_i\right|^2 \right)^\frac{1}{2} \\
        &\hspace{-4em} \leq \left( \sum_{i=1}^M \left| (A_M \Gamma_M^T P_M^T\text{\textbf{x}}_M)_i \right|^2 \right)^\frac{1}{2} \frac{\left\|\text{\textbf{y}}\right\|_{\ell^2}}{2^{J+2}} = \left\| A_M \Gamma_M^T P_M^T\text{\textbf{x}}_M \right\|_{\ell^2} \frac{\left\|\text{\textbf{y}}\right\|_{\ell^2}}{2^{J+2}} \\
        &\hspace{-4em}\leq \frac{\left\| A_M \Gamma_M^T P_M^T\right\|}{2^{J+2}} \left\|\text{\textbf{x}}\right\|_{\ell^2} \left\|\text{\textbf{y}}\right\|_{\ell^2}
    \end{align*}
    The same procedure applied to the third term yields the same bound as above. For the fourth term, using previous estimates,
    \begin{align*}
        \left\| A_M\Gamma_M^T \left( \left(P_{\infty,M}^T\text{\textbf{x}}_\infty\right) \odot  \left(P_{\infty,M}^T\text{\textbf{y}}_\infty\right) \right)\right\|_{\ell^2} &\leq \left\| A_M\Gamma_M^T \right\| \left\|P_{\infty,M}^T\text{\textbf{x}}_\infty\right\|_{\ell^2} \left\|P_{\infty,M}^T\text{\textbf{y}}_\infty\right\|_{\ell^2} \\&\leq \left\|A_M\Gamma_M^T\right\| \frac{\left\|\text{\textbf{x}}\right\|_{\ell^2} \left\|\text{\textbf{y}}\right\|_{\ell^2}}{2^{2J+4}}.
    \end{align*}
    For the fifth term, using the block structure of $\Gamma^T$ in \eqref{eq:haar_transform_biform_Theta_matrix} and previous estimates,
    \begin{align*}
        \left\| A_M\Gamma_{\infty,M}^T\Pi_\infty \left( P^T \text{\textbf{x}} \odot P^T \text{\textbf{y}} \right) \right\|_{\ell^2} &= \left\| A_M \sum_{p=J+1}^\infty (H_{2^p})_{1:M,*} \left( \text{diag}\left(P^T\text{\textbf{x}}\right)_{2^p}^* \right) \left(P^T\text{\textbf{y}}\right)_{2^p}^* \right\|_{\ell^2} \\
        &\leq \|A_M\| \sum_{p=J+1}^\infty \|H_{2^p}\| \left\|\left(P^T\text{\textbf{x}}\right)_{2^p}^*\right\|_{\ell^2} \left\|\left(P^T\text{\textbf{y}}\right)_{2^p}^*\right\|_{\ell^2} \\
        &\leq \|A_M\| \sum_{p=J+1}^\infty 2^\frac{p}{2} \frac{\left\|\text{\textbf{x}}\right\|_{\ell^2}}{2^{p+2}} \frac{\left\|\text{\textbf{y}}\right\|_{\ell^2}}{2^{p+2}} = \frac{\sqrt{2}\,\|A_M\|}{4-\sqrt{2}} \, \frac{\left\|\text{\textbf{x}}\right\|_{\ell^2} \left\|\text{\textbf{y}}\right\|_{\ell^2}}{2^{\frac{3J}{2}+4}}
    \end{align*}
   
    Hence, adding the estimates, the result follows.
\end{enumerate}

\subsubsection{Proof of Proposition~\ref{prop:estimates_infinite_proj_infinite_vectors}}

\begin{enumerate}[wide, label=\roman*)]
    \item To estimate $\mathcal{H}(\Bar{\text{\textbf{c}}}^T\text{\textbf{w}}(t)\text{\textbf{w}}^T\text{\textbf{y}})$, we again decompose in three parts and estimate each one separately:
    \begin{equation*}
        \mathcal{H}(\Bar{\text{\textbf{c}}}^T\text{\textbf{w}}(t)\text{\textbf{w}}^T(t)\text{\textbf{y}}) = \mathcal{H}(\Bar{\text{\textbf{a}}}^T\Omega(t)P^T\text{\textbf{y}}) + \mathcal{H}(\Bar{\text{\textbf{a}}}^T\Omega^T(t)P^T\text{\textbf{y}}) + \mathcal{H}(\Bar{\text{\textbf{a}}}^T\Theta(t)P^T\text{\textbf{y}}).
    \end{equation*}
    
    \begin{itemize}[wide]
        \item $\mathcal{H}(\Bar{\text{\textbf{a}}}^T\Omega(t)P^T\text{\textbf{y}})$: Since $\Bar{\text{\textbf{c}}} \in \R^M$, $\left(\mathcal{H}(\Bar{\text{\textbf{a}}}^T\Omega(t)P^T\text{\textbf{y}})\right)_i = (\Bar{\text{\textbf{c}}}^TP_{2^j}H_{2^j})_i (P^T\text{\textbf{y}})_i$. Therefore, applying the block decomposition to $\mathcal{H}(\Bar{\text{\textbf{a}}}^T\Omega(t)P^T\text{\textbf{y}})$ and some previously calculated estimates from \eqref{eq:PTc_block_estimate_infinite_vector} and \eqref{eq:PTc_block_estimate_finite_vector},
        \begin{align*}
            \left\| \left(\mathcal{H}( \Bar{\text{\textbf{a}}}^T \Omega(t) P^T \text{\textbf{y}}) \right)_{2^j}^* \right\|_{\ell^2} &= \left\| \left(H_{2^j}^T P_{2^j}^T \Bar{\text{\textbf{c}}}\right)_{2^j}^* \odot \left(P^T \text{\textbf{y}}\right)_{2^j}^* \right\|_{\ell^2} \leq \left\| \text{diag} \left((P^T\text{\textbf{y}})_{2^j}^* \right) \right\| \, \left\| \left(H_{2^j}^T P_{2^j}^T \Bar{\text{\textbf{c}}}\right)_{2^j}^* \right\|_{\ell^2} \\
            &\hspace{2em}= \max_{2^j+1 \leq i \leq 2^{j+1}} \left| \left(P^T\text{\textbf{y}} \right)_i \right| \, \left\| \left(H_{2^j}^T P_{2^j}^T \Bar{\text{\textbf{c}}}\right)_{2^j}^* \right\|_{\ell^2} \leq \frac{\|\Bar{\text{\textbf{c}}}\|_{\ell^2} \|\text{\textbf{y}}\|_{\ell^2}}{2^{\frac{3j}{2}+3}}  
        \end{align*}
        and thus
        \begin{equation*}
            \left\| \Pi_\infty\mathcal{H}( \Bar{\text{\textbf{a}}}^T \Omega(t) P^T \text{\textbf{y}}) \right\| = \sqrt{ \sum_{j=J+1}^\infty \left\| \left(\mathcal{H}( \Bar{\text{\textbf{a}}}^T \Omega(t) P^T \text{\textbf{y}}) \right)_{2^j}^* \right\|^2 } \leq \frac{1}{\sqrt{7}} \frac{\|\Bar{\text{\textbf{c}}}\|_{\ell^2} \|\text{\textbf{y}}\|_{\ell^2}}{2^{\frac{3J}{2}+3}}.
        \end{equation*}
        
        \item $\mathcal{H}(\Bar{\text{\textbf{a}}}^T\Omega^T(t)P^T\text{\textbf{y}})$: Since $\Bar{\text{\textbf{a}}}^T\Omega^T(t)P^T\text{\textbf{y}} = (\Bar{\text{\textbf{a}}}^T\Omega^T(t)P^T\text{\textbf{y}})^T = (P^T\text{\textbf{y}})^T\Omega(t)\Bar{\text{\textbf{a}}}$, then using the estimate from \eqref{eq:OmegaT_PT_c_block_estimate},
        \begin{align*}
            \left\|\left( \mathcal{H}(\Bar{\text{\textbf{a}}}^T \Omega^T(t)P^T \text{\textbf{y}}) \right)_{2^j}^*\right\|_{\ell^2} &= \left\| \text{diag}\left((P^T\Bar{\text{\textbf{c}}})_{2^j}^*\right) \big(\Tilde{\Omega}^TP^T\text{\textbf{y}}\big)_{2^j}^* \right\|_{\ell^2} \\
            &\leq \max_{2^j+1 \leq i \leq 2^{j+1}} \left| (P^T\Bar{\text{\textbf{c}}})_i \right| \left(1 + \frac{\sqrt{2}}{8}\right) \frac{\left\|\text{\normalfont{\textbf{y}}}\right\|_{\ell^2}}{2^\frac{j}{2}} \leq \left(1 + \frac{\sqrt{2}}{8}\right) \frac{\|\Bar{\text{\textbf{c}}}\|_{\ell^2} \|\text{\textbf{y}}\|_{\ell^2}}{2^{\frac{3j}{2}+2}},
        \end{align*}
        and the bound for $\Pi_\infty \mathcal{H}(\Bar{\text{\textbf{a}}}^T\Omega^T(t)P^T\text{\textbf{y}})$ becomes
        \begin{equation*}
            \left\| \Pi_\infty\mathcal{H}(\Bar{\text{\textbf{a}}}^T\Omega^T(t)P^T\text{\textbf{y}}) \right\|_{\ell^2} \leq \frac{1}{\sqrt{7}} \left(1 + \frac{\sqrt{2}}{8}\right) \frac{\|\Bar{\text{\textbf{c}}}\|_{\ell^2} \|\text{\textbf{y}}\|_{\ell^2}}{2^{\frac{3J}{2}+2}}.
        \end{equation*}
        
        \item $\mathcal{H}(\Bar{\text{\textbf{a}}}^T\Theta(t)P^T\text{\textbf{y}})$: Using the block structure of $\Gamma^T$ from \eqref{eq:haar_transform_biform_Theta_matrix} and the estimates \eqref{eq:PTc_block_estimate_finite_vector} and \eqref{eq:PTc_block_estimate_infinite_vector}, we have
        \begin{align*}
            \left\| \left( \Gamma^T\left( \Bar{\text{\textbf{a}}} \odot P^T\text{\textbf{y}} \right)\right)_{2^j}^* \right\|_{\ell^2} &= \left\| \sum_{p=j+1}^\infty (H_{2^p})_{2^j+1:2^{j+1},*} \left( \Bar{\text{\textbf{a}}}_{2^p}^* \odot (P^T\text{\textbf{y}})_{2^p}^* \right) \right\|_{\ell^2} \\
            &\leq \sum_{p=j+1}^\infty \left\|(H_{2^p})_{2^j+1:2^{j+1},*}\right\| \left\|\text{diag}(\Bar{\text{\textbf{a}}}_{2^p}^*)\right\| \left\|(P^T\text{\textbf{y}})_{2^p}^*\right\|_{\ell^2} \\
            &\leq  \sum_{p=j+1}^\infty \left\|H_{2^p}\right\| \max_{2^p+1 \leq i \leq 2^{p+1}}\left|(P^T\Bar{\text{\textbf{c}}})_i\right| \left\|(P^T\text{\textbf{y}})_{2^p}^*\right\|_{\ell^2} \\
            &\leq \sum_{p=j+1}^\infty 2^\frac{p}{2} \frac{\left\|\Bar{\text{\textbf{c}}}\right\|_{\ell^2}}{2^{p+2}} \frac{\left\|\text{\textbf{y}}\right\|_{\ell^2}}{2^{p+1}} = \frac{\sqrt{2}}{4-\sqrt{2}} \frac{\left\|\Bar{\text{\textbf{c}}}\right\|_{\ell^2} \left\|\text{\textbf{y}}\right\|_{\ell^2}}{2^{\frac{3j}{2}+3}},
        \end{align*}
        and thus
        \begin{align*}
            \left\|\Pi_\infty\mathcal{H} (\Bar{\text{\textbf{a}}}^T\Theta(t)P^T\text{\textbf{y}}) \right\|_{\ell^2} \leq \frac{\sqrt{2}}{\sqrt{7}(4-\sqrt{2})} \frac{\left\|\Bar{\text{\textbf{c}}}\right\|_{\ell^2} \left\|\text{\textbf{y}}\right\|_{\ell^2}}{2^{\frac{3J}{2}+3}}.
        \end{align*}
    \end{itemize}
    
    The result follows from summing the estimates.
    
    \item We proceed in the same way as we did for $\Pi_\infty\mathcal{H} (\Bar{\text{\textbf{c}}}^T\text{\textbf{w}}(t)\text{\textbf{w}}^T(t)\text{\textbf{y}})$, that is, separate it in three parts
    \begin{equation*}
        \mathcal{H}(\text{\textbf{x}}^T\text{\textbf{w}}(t)\text{\textbf{w}}^T(t)\text{\textbf{y}}) = \mathcal{H}(\text{\textbf{x}}^T\Omega(t)P^T\text{\textbf{y}}) + \mathcal{H}(\text{\textbf{x}}^T\Omega^T(t)P^T\text{\textbf{y}}) + \mathcal{H}(\text{\textbf{x}}^T\Theta(t)P^T\text{\textbf{y}})
    \end{equation*}
    and estimate each one.
    
    \begin{itemize}[wide]
        \item $\mathcal{H}(\text{\textbf{x}}^T\Omega(t)P^T\text{\textbf{y}})$:
        Applying the block decomposition to the expression of $\mathcal{H}(\text{\textbf{x}}^T\Omega(t)P^T\text{\textbf{y}})$ and the estimates from \eqref{eq:PTc_block_estimate_infinite_vector} and \eqref{eq:PTc_block_estimate_finite_vector},
        \begin{align*}
            \left\|\left( \mathcal{H}(\text{\textbf{x}}^T\Omega(t)P^T\text{\textbf{y}}) \right)_{2^j}^*\right\|_{\ell^2} &= \left\| (\Tilde{\Omega}^TP^T\text{\textbf{x}})_{2^j}^* \odot (P^T\text{\textbf{y}})_{2^j}^* \right\|_{\ell^2} \leq \left\|\text{diag}\left(P^T\text{\textbf{y}}\right)_{2^j}^*\right\| \left\| (\Tilde{\Omega}^TP^T\text{\textbf{x}})_{2^j}^*\right\|_{\ell^2} \\
            &\leq \frac{\|\text{\textbf{y}}\|_{\ell^2}}{2^{j+1}} \left( 3+2\sqrt{2} \right) \frac{\|\text{\textbf{x}}\|_{\ell^2}}{2^{\frac{j}{2}+2}} = \frac{\left( 3+2\sqrt{2} \right)}{2^{\frac{3j}{2}+3}}\|\text{\textbf{x}}\|_{\ell^2} \|\text{\textbf{y}}\|_{\ell^2}
        \end{align*}
        and then
        \begin{equation*}
            \left\| \Pi_\infty\mathcal{H}(\text{\textbf{x}}^T\Omega(t)P^T\text{\textbf{y}}) \right\|_{\ell^2} \leq \frac{\left(3+2\sqrt{2}\right)}{\sqrt{7}} \frac{\|\text{\textbf{x}}\|_{\ell^2} \|\text{\textbf{y}}\|_{\ell^2}}{2^{\frac{3J}{2}+3}}
        \end{equation*}
        
        \item $\mathcal{H}(\text{\textbf{x}}^T\Omega^T(t)P^T\text{\textbf{y}})$: Since $\text{\textbf{x}}^T\Omega^T(t)P^T\text{\textbf{y}} = (\text{\textbf{x}}^T\Omega^T(t)P^T\text{\textbf{y}})^T = \text{\textbf{y}}^T\Omega^T(t)P^T\text{\textbf{x}}$, repeating the same process for the previous item, we also have
        \begin{equation*}
            \left\| \Pi_\infty\mathcal{H}(\text{\textbf{x}}^T\Omega^T(t)P^T\text{\textbf{y}}) \right\|_{\ell^2} \leq \frac{\left(3+2\sqrt{2}\right)}{\sqrt{7}} \frac{\|\text{\textbf{x}}\|_{\ell^2} \|\text{\textbf{y}}\|_{\ell^2}}{2^{\frac{3J}{2}+3}}
        \end{equation*}
        
        \item $\mathcal{H}(\text{\textbf{x}}^T\Theta(t)P^T\text{\textbf{y}})$: Using the block decomposition and the block structure of $\Gamma$, 
        \begin{align*}
            \left\|\left( \Gamma^T\left( P^T\text{\textbf{x}} \odot P^T\text{\textbf{y}} \right)\right)_{2^j}^*\right\|_{\ell^2} & = \left\| \sum_{p=j+1}^\infty (H_{2^p})_{2^j+1:2^{j+1},*} \left( (P^T\text{\textbf{x}})_{2^p}^* \odot (P^T\text{\textbf{y}})_{2^p}^* \right) \right\|_{\ell^2} \\
            &\leq \sum_{p=j+1}^\infty \left\|(H_{2^p})\right\| \, \left\|\text{diag}((P^T\text{\textbf{x}})_{2^p}^*)\right\| \left\|(P^T\text{\textbf{y}})_{2^p}^*\right\|_{\ell^2} \\
            &\leq \sum_{p=j+1}^\infty 2^\frac{p}{2} \frac{\left\|\text{\textbf{x}}\right\|_{\ell^2}}{2^{p+1}} \frac{\left\|\text{\textbf{y}}\right\|_{\ell^2}}{2^{p+1}} = \frac{\sqrt{2}}{4-\sqrt{2}} \frac{\left\|\text{\textbf{x}}\right\|_{\ell^2} \left\|\text{\textbf{y}}\right\|_{\ell^2}}{2^{\frac{3j}{2}+2}}
        \end{align*}
        and thus
        \begin{equation*}
            \left\| \Pi_\infty\mathcal{H}(\text{\textbf{x}}^T\Theta(t)P^T\text{\textbf{y}}) \right\|_{\ell^2} \leq \frac{\sqrt{2}}{\sqrt{7}\left(4-\sqrt{2}\right)}  \frac{\left\|\text{\textbf{x}}\right\|_{\ell^2} \left\|\text{\textbf{y}}\right\|_{\ell^2}}{2^{\frac{3J}{2}+2}} \\
        \end{equation*}
    \end{itemize}
    Putting together the estimates, we have the desired result.
\end{enumerate}

\bibliographystyle{plain}
\bibliography{references}

\end{document}